\theoremstyle{plain}
\newtheorem{thm}[equation]{Theorem}
\newtheorem{cor}[equation]{Corollary}
\newtheorem{prop}[equation]{Proposition}
\newtheorem{lem}[equation]{Lemma}
\theoremstyle{definition}
\newtheorem{defn}[equation]{Definition}
\theoremstyle{remark}
\newtheorem{rem}[equation]{Remark}
\renewcommand{\subsection}{\@startsection{subsection}{2}{0pt}{-3ex
plus -1ex minus -0.2ex}{-2mm plus -0pt minus
-2pt}{\normalfont\bfseries}} \makeatother
\numberwithin{equation}{subsection}
\newcommand{\Lmod}[1]{#1\text{-}{\mathsf{mod}}}
\newcommand{\LMod}[1]{#1\text{-}{\mathsf{Mod}}}
\newcommand{\RMod}[1]{{\mathsf{Mod}}\text{-}#1}
\newcommand{\QCoh}[1]{\mathsf{QCoh}(#1)}
\newcommand{\idot}{{\:\raisebox{2pt}{\text{\circle*{1.5}}}}}
\DeclareMathOperator{\sym}{\mathrm{Sym}}
\DeclareMathOperator{\Ker}{\mathrm{Ker}}
\DeclareMathOperator{\gr}{\mathrm{gr}}
\DeclareMathOperator{\codim}{\mathrm{codim}}
\newcommand{\beq}{\begin{equation}\label}
\newcommand{\eeq}{\end{equation}}
\newcommand{\iso}{{\;\stackrel{_\sim}{\to}\;}}
\renewcommand{\o}{\otimes }
\newcommand{\erem}{\hfill$\lozenge$\end{rem}\vskip 3pt }
\newcommand{\HH}{{\mathbb H}}
\newcommand{\KZ}{\mathsf{KZ}}
\newcommand{\dd}{{\mathscr{D}}}
\newcommand{\pa}{\partial }
\newcommand{\mc}{\mathcal}
\newcommand{\C}{\mathbb{C}}
\newcommand{\g}{\mathfrak{g}}
\newcommand{\Z}{{\mathbb Z}}
\DeclareMathOperator{\Irr}{\mathrm{Irr}}
\newcommand{\ds}{{\dots}}
\newcommand{\id}{\mathrm{id}}
\newcommand{\Cs}{\C^\times}
\newcommand{\mm}{\ms{M}}
\newcommand{\Vo}{V^{o}}
\newcommand{\mbf}{\mathbf}
\newcommand{\mf}{\mathfrak}
\newcommand{\ra}{\longrightarrow}
\newcommand{\nc}{\newcommand}
\newcommand{\ms}{\mathscr}
\nc{\FV}{\mc{F}_{\mathrm{V}}}
\nc{\grV}{\gr_{\mathrm{V}}}
\nc{\ddUh}{\widehat{\dd_{U}}}
\nc{\A}{\mathbb{A}}
\nc{\Grel}{\mc{G}^{\mathrm{rel}}}
\nc{\Grat}{\mc{G}^{\mathrm{rat}}}
\nc{\Squo}[1]{\A^{(#1)}}
\nc{\twist}{\mathrm{twist}}
\nc{\Cd}{\mc{C}}
\nc{\Span}{\mathrm{Span}}
\nc{\Grass}{\mathrm{Gr}}
\nc{\V}{\mc{V}}
\nc{\Ps}{\mathbb{P}}
\renewcommand{\H}{\mathsf{H}}
\newcommand{\F}{\mc{F}}
\newcommand{\sH}{\mc{H}}
\nc{\Mod}{\mathrm{Mod} \,}
\nc{\grad}{\mathrm{grad}}
\nc{\tbh}{{\widetilde{\bh}}}
\nc{\ras}{\Lambda}
\nc{\Loc}{\mathsf{Loc}}
\renewcommand{\H}{\mathsf{H}}
\nc{\bY}{\overline{\Y}}
\nc{\Hamp}{\mathbb{H}^{\perp}}
\nc{\Ham}{\mathbb{H}}
\nc{\Der}{\mathrm{Der}}
\nc{\Dun}{\mathrm{Dun}}
\nc{\sfB}{\mathsf{B}}
\nc{\sfN}{\mathsf{N}}
\newcommand{\bc}{\mathbf{c}} 
\nc{\by}{\mbf{y}}
\nc{\map}{\varphi}
\begin{document}

\title{Affinity of Cherednik algebras on projective space}

\author{Gwyn Bellamy}

\address{School of Mathematics and Statistics, University of Glasgow, 15 University Gardens, Glasgow, G12 8QW.}
\email{gwyn.bellamy@glasgow.ac.uk}

\author{Maurizio Martino}

\email{ohmymo@googlemail.com}

\begin{abstract}
We give sufficient conditions for the affinity of Etingof's sheaves of Cherednik algebras on projective space. To do this we introduce the notion of pull-back of modules under certain flat morphisms.   
\end{abstract}

\maketitle

{\small
\tableofcontents
}

\section{Introduction}

\subsection{} In the seminal paper \cite{EG}, Etingof and Ginzburg introduced the family of rational Cherednik algebras associated to a complex reflection group. Since their introduction, rational Cherednik algebras have been intensively studied, and found to be related to several other areas of mathematics. Their definition was vastly generalized by Etingof in the preprint \cite{ChereSheaf}. Given any smooth variety $X$ and finite group $W$ acting on $X$, Etingof defines a family of sheaves\footnote{Here, one must take the $W$-equivariant Zariski topology on $X$. See section \ref{sec:conv}.} of algebras $\sH_{\omega,\bc}(X,W)$ on $X$ which are flat deformations of the skew group ring $\dd_X \rtimes W$. Being sheaves of algebras, one would like to be able to use standard geometric techniques such as pull-back and push-forward to study their representation theory. This paper is a small first step in developing these techniques. As motivation, we consider the question of affinity for these algebras when $X = \mathbb{P}(V)$. 

\subsection{} If $V$ is a finite-dimensional vector space and $W$ acts linearly on $V$, then there is an induced action of $W$ on $\mathbb{P}(V)$. Thus, Etingof's construction gives us a sheaf of algebras $\sH_{\omega,\bc}(\mathbb{P}(V),W)$ on $\mathbb{P}(V)$. In trying to understand the representation theory of these algebras, one would like to know when they are affine i.e. for which $\omega$ and $\bc$ does the global sections functor give us an equivalence between the category of modules for $\sH_{\omega,\bc}(\mathbb{P}(V),W)$ and the category of modules for its global sections $\H_{\omega,\bc}(\mathbb{P}(V),W)$. Our main result is an explicit combinatorial criterion on $\omega$ and $\bc$ which guarantees that the corresponding Cherednik algebra is affine. We associate to $\omega, \bc$ and $\lambda \in \Irr W$ a pair of scalars $a_{\lambda},b_{\lambda}$; see section \ref{sec:mainthm}. 

\begin{thm}
The sheaf of algebras $\sH_{\omega,\bc}(\mathbb{P}(V),W)$ is affine provided $a_{\lambda} \notin \Z_{\ge 0}$ and $b_{\lambda} \notin \Z_{>0}$ for all $\lambda \in \Irr W$.
\end{thm}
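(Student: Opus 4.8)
The plan is to reduce affinity of $\sH := \sH_{\omega,\bc}(\mathbb{P}(V),W)$ to the \emph{exactness} and \emph{faithfulness} of the global sections functor $\Gamma := \Gamma(\mathbb{P}(V),-)$, and then to prove each by transporting the question --- via pull-back along the tautological $\C^\times$-torsor --- to a local cohomology computation on the affine space $V$. Put $n := \dim V$; if $n = 1$ then $\mathbb{P}(V)$ is a point and the statement is trivial, so assume $n \ge 2$, so that $\{0\}$ has codimension $\ge 2$ in $V$. Write $V^{\circ} := V \sminus \{0\}$, let $\pi \colon V^{\circ} \to \mathbb{P}(V)$ be the $\C^\times$-torsor and $j \colon V^{\circ} \hookrightarrow V$ the open embedding. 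Since $\sH$ is a sheaf of Noetherian filtered algebras with $\H \iso \Gamma(\mathbb{P}(V),\sH)$, a standard argument (cf. Beilinson--Bernstein) reduces affinity of $\sH$ to two statements: (i) $H^i(\mathbb{P}(V),\sM) = 0$ for all $i > 0$ and every quasi-coherent $\sH$-module $\sM$ (giving exactness of $\Gamma$); and (ii) $\Gamma(\mathbb{P}(V),\sM) \ne 0$ for every nonzero $\sH$-module $\sM$ (giving faithfulness). Indeed (i) and (ii) together force the unit and counit of the adjunction $(\Loc,\Gamma)$ to be isomorphisms.

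For (i) I would use the pull-back of modules along the flat morphisms $\pi$ and $j$. Pull-back along $\pi$ identifies $\pi^*\sH$ with Etingof's sheaf $\sH_\bc(V^\circ,W)$ on $V^\circ$, the datum $\omega$ being recorded by the $\C^\times$-monodromy --- equivalently the generalised $\eul$-eigenvalue --- $z_\omega$ along the fibres of $\pi$; and since $\pi$ is affine, $H^i(\mathbb{P}(V),\sM)$ is the $z_\omega$-weight space of $H^i(V^\circ,\pi^*\sM)$. Because $\sH_\bc(V,W)$ is $S_2$ as an $\oo_V$-module (its order filtration has locally free associated graded) and $\codim_V\{0\} \ge 2$, the sheaf $j_*\pi^*\sM$ extends $\pi^*\sM$ to a quasi-coherent $\sH_\bc(V,W)$-module, with $R^i j_*\pi^*\sM = 0$ for $0 < i < n-1$ and $R^{n-1}j_*\pi^*\sM = \underline{H}^{n}_{\{0\}}$ of this extension. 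As $V$ is affine, this already gives $H^i(\mathbb{P}(V),\sM) = 0$ for $0 < i < \dim\mathbb{P}(V)$, and reduces (i) to the vanishing of the $z_\omega$-weight space of $\Gamma\big(V, R^{n-1}j_*\pi^*\sM\big)$. Now, for $\sM$ coherent (the general case following by filtered colimits), $\eul$ acts locally finitely on $R^{n-1}j_*\pi^*\sM$ with eigenvalues bounded above; decomposing along the $W$-action into the blocks indexed by $\lambda \in \Irr W$ --- where the extremal eigenvalue in block $\lambda$ is the scalar by which $\eul$ acts on the corresponding lowest-weight space, a function of $\bc$ and $\lambda$ --- the weight $z_\omega$ occurs in this module exactly when $a_\lambda \in \Z_{\ge 0}$ for some $\lambda$. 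Hence the hypothesis $a_\lambda \notin \Z_{\ge 0}$ for all $\lambda$ yields (i). (For $\bc = 0$, $W = 1$ this recovers $H^{>0}(\mathbb{P}^{n-1},\oo(d)) = 0$ for $d > -n$.)

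For (ii), suppose $0 \ne \sM$ with $\Gamma(\mathbb{P}(V),\sM) = 0$. Then $\Gamma(V, j_*\pi^*\sM) = \Gamma(V^\circ,\pi^*\sM)$ is nonzero, since $\pi$ and $j$ are flat and dominant, but its $z_\omega$-weight space vanishes. I would then "translate" between $\C^\times$-weights using the self-equivalences $\sM \mapsto \sM(\pm 1)$ given by twisting by $\oo_{\mathbb{P}(V)}(\pm 1)$: such a twist shifts the $\C^\times$-weight by $\pm 1$ and moves $\eul$ accordingly, and the hypothesis $b_\lambda \notin \Z_{>0}$ for all $\lambda$ guarantees that none of the weights separating $z_\omega$ from the support of $\Gamma(V^\circ,\pi^*\sM)$ is a "wall". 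Thus the transition maps between adjacent weight spaces are isomorphisms up to weight $z_\omega$, which forces the $z_\omega$-weight space to be nonzero --- a contradiction. (For $\bc = 0$, $W = 1$ this is the familiar fact that $\oo(-1),\dots,\oo(-n+1)$ are the only acyclic line bundles on $\mathbb{P}^{n-1}$, i.e. the regularity needed for faithfulness.)

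The main obstacle, I expect, is the spectral bookkeeping behind (i) and its mirror in (ii): controlling the $\eul$-spectrum of the top local cohomology module $R^{n-1}j_*\pi^*\sM$ for an arbitrary coherent $\sM$, in particular the "bounded above" claim (equivalently: $j_*\pi^*\sM$ has no subquotient supported at $0$, so that $R\Gamma_{\{0\}}$ is concentrated in top degree), and then matching the extremal $\eul$-eigenvalue of each $W$-block precisely with the scalars $a_\lambda, b_\lambda$ of section \ref{sec:mainthm}. Degenerating to $\bc = 0$, where everything collapses to Beilinson--Bernstein for $\mathbb{P}^{n-1}$, fixes the shape of the criterion; making the eigenvalue count uniform over $\Irr W$ is the technical heart of the argument.
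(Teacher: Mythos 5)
Your overall architecture is the same as the paper's: pull back along the torsor $\pi\colon V^{\circ}\to\mathbb{P}(V)$ (Theorem \ref{thm:monodromicequiv}), extend by $j_{\idot}$ to $V$, identify $\Gamma(\mathbb{P}(V),-)$ with the $(\omega-\rho_{\bc})$-weight space of $\Gamma(V,j_{\idot}\pi^*(-))$, kill the obstruction modules supported at $\{0\}$ by an $\mathbf{h}$-weight argument for exactness, and climb the weight ladder for conservativity. However, two of your intermediate steps are wrong as stated. First, the claim that $R^i j_{\idot}\pi^*\ms{M}=0$ for $0<i<n-1$ "because $\sH_{\bc}(V,W)$ is $S_2$" fails: the $S_2$ property of the sheaf of algebras says nothing about the depth at $0$ of an arbitrary coherent module. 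For instance (take $W=1$, $\bc=0$, $n\ge 3$) if $\ms{M}$ is the structure sheaf of a hyperplane in $\mathbb{P}(V)$, then $j_{\idot}\pi^*\ms{M}$ is $\mc{O}$ of a hyperplane through the origin, whose local cohomology at $0$ sits in degree $n-1$, so $R^{n-2}j_{\idot}\pi^*\ms{M}\neq 0$; correspondingly $H^i(\mathbb{P}(V),\ms{M})$ can be nonzero in intermediate degrees when the hypothesis on $a_\lambda$ fails, so your unconditional reduction to the top local cohomology is not available. The correct (and easier) route, which is the paper's, is that \emph{all} the sheaves $R^{i}j_{\idot}\pi^*\ms{M}$ with $i>0$ are supported at $\{0\}$, hence lie in $\mathrm{Ind}\,\mc{O}_-$, and by GGOR (projectives have Verma flags) their $\mathbf{h}$-spectrum lies in $\bigcup_{\lambda}(c_\lambda-n-\Z_{\ge 0})$; the condition $a_\lambda\notin\Z_{\ge 0}$ then removes the weight $\omega-\rho_{\bc}$ from every one of them at once. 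Note also that this identification of the spectrum with the scalars $a_\lambda$, which you defer as "the technical heart," is exactly the input the paper takes from \cite{GGOR}; your "occurs exactly when $a_\lambda\in\Z_{\ge 0}$" should only be an implication in one direction.

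Second, in the conservativity step, twisting by $\mc{O}_{\mathbb{P}(V)}(\pm 1)$ is not a self-equivalence of $\LMod{\sH_{\omega,\bc}(\mathbb{P}(V),W)}$ (it shifts the TDO parameter $\omega$ by $\pm 1$), and the "transition maps between adjacent weight spaces are isomorphisms away from walls" claim is both unjustified and stronger than what is true. What one actually proves is weaker and suffices: a nonzero $W$-isotypic weight vector $m$ of weight $\alpha+\omega-\rho_{\bc}$ with $\alpha>0$ satisfies $y\cdot m\neq 0$ for some $y\in V$, since otherwise $\mathbf{h}\cdot m=-d_\lambda m$ forces $b_\lambda=\alpha\in\Z_{>0}$; and for $\alpha<0$ one uses that if $x\cdot m=0$ for all $x\in V^*$ then $m$ is supported at $\{0\}$, which is impossible for a nonzero section coming from $V^{\circ}$ --- so the ascent needs no hypothesis on $b_\lambda$ at all. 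With these corrections your argument collapses onto the paper's proof; as written, the depth/vanishing claim and the "isomorphism between weight spaces" claim are genuine gaps.
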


In order to prove this result, we introduce two key pieces of machinery. The first is the notion of pull-back of $\sH_{\omega,\bc}$-modules under certain well-behaved maps (which we call melys). The second is to establish an equivalence between the category of (twisted) $T$-equivariant $\sH_{\bc}$-modules on a principal $T$-bundle $Y \rightarrow X$ and the category of modules for a Cherednik algebra $\sH_{\omega,\bc}$ on the base $X$ of the bundle. With this machinery in place, the proof of the main result is essentially the same as for sheaves of twisted differential operators on $\mathbb{P}(V)$, see \cite[Theorem 1.6.5]{HTT}. 

\subsection{} Being able to pull-back $D$-modules is an extremely useful tool in studying the representation theory of sheaves of differential operators. Therefore, one would like to be able to do the same for Cherednik algebras. We show that this is possible, at least for some morphisms. A $W$-equivariant map $\map : Y \rightarrow X$ between smooth varieties is said to be \textit{melys} if it is flat and, for all reflections $(w,Z)$ in $X$, $\map^{-1}(Z)$ is contained in the fixed point set $Y^w$ of $w$.

\begin{thm}\label{thm:pullbackintro}
If $\map : Y \rightarrow X$ is melys, then pull-back is an exact functor
$$
\map^* : \LMod{\sH_{\omega,\bc}(X,W)} \ra \LMod{\sH_{\map^* \omega,\map^* \bc}(Y,W)}.
$$
\end{thm}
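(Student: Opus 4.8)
The plan is to imitate, for sheaves of Cherednik algebras, the construction of the transfer (inverse–image) bimodule for $\dd$-modules (cf.\ \cite[\S1.3,\S1.6]{HTT}), and to check that the melys condition is exactly what makes this work. Write $X^{\circ}\subseteq X$ for the complement of the reflection hyperplanes, put $Y^{\circ}:=\map^{-1}(X^{\circ})$, and let $j_{X}:X^{\circ}\into X$, $j_{Y}:Y^{\circ}\into Y$ be the inclusions. I will use that $\sH_{\omega,\bc}(X,W)|_{X^{\circ}}=\dd^{\omega}_{X^{\circ}}\rtimes W$, that $\sH_{\omega,\bc}(X,W)$ is the $\oo_{X}\rtimes W$-subsheaf of $(j_{X})_{*}(\dd^{\omega}_{X^{\circ}}\rtimes W)$ generated by the Dunkl operators $D_{v}$ ($v$ a vector field), and that along a reflection hyperplane $Z$ with local equation $f_{Z}$ one has $D_{v}=\widetilde v+\sum_{w}\gamma_{w,Z}\,\langle df_{Z},v\rangle f_{Z}^{-1}(w-1)$, the sum over $w$ fixing $Z$ pointwise and $\widetilde v$ a local lift of $v$ to $\dd^{\omega}_{X}$ (these are standard consequences of Etingof's construction \cite{ChereSheaf}). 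The candidate transfer bimodule $\sH_{Y\to X}$ is then the image of $\oo_{Y}\otimes_{\map^{-1}\oo_{X}}\map^{-1}\sH_{\omega,\bc}(X,W)$ under the natural map into $(j_{Y})_{*}(\dd_{Y^{\circ}\to X^{\circ}}\rtimes W)$ coming from the unit of the (twisted) $\dd$-module transfer bimodule, and I set $\map^{*}M:=\sH_{Y\to X}\otimes_{\map^{-1}\sH_{\omega,\bc}(X,W)}\map^{-1}M$.

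First I would dispatch the formal points. A dimension count using only flatness and $W$-equivariance shows that every reflection hyperplane $Z'$ of $Y$, say $Z'\subseteq Y^{w'}$, lies over a reflection hyperplane $Z=\overline{\map(Z')}$ of $X$ for the same $w'$: indeed $\overline{\map(Z')}$ is irreducible, contained in $X^{w'}$, and of dimension $\ge\dim Z'-(\dim Y-\dim X)=\dim X-1$, hence a codimension-one component of $X^{w'}$. Consequently $Y^{\circ}$ contains no reflection hyperplane of $Y$, so $\sH_{\map^{*}\omega,\map^{*}\bc}(Y,W)|_{Y^{\circ}}=\dd^{\map^{*}\omega}_{Y^{\circ}}\rtimes W$ and $\sH_{\map^{*}\omega,\map^{*}\bc}(Y,W)$ embeds into $(j_{Y})_{*}(\dd^{\map^{*}\omega}_{Y^{\circ}}\rtimes W)$, which acts on $(j_{Y})_{*}(\dd_{Y^{\circ}\to X^{\circ}}\rtimes W)$. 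Since $\map$ is flat, $\oo_{Y}$ is flat over $\map^{-1}\oo_{X}$; as $\sH_{\omega,\bc}(X,W)$ is locally $\oo_{X}$-free (PBW), the map above is an isomorphism over $Y^{\circ}$ with $\oo_{Y}$-torsion-free source, hence injective, so $\sH_{Y\to X}\cong\oo_{Y}\otimes_{\map^{-1}\oo_{X}}\map^{-1}\sH_{\omega,\bc}(X,W)$ as a right $\map^{-1}\sH_{\omega,\bc}(X,W)$-module. Therefore $\sH_{Y\to X}\otimes_{\map^{-1}\sH_{\omega,\bc}(X,W)}L\cong\oo_{Y}\otimes_{\map^{-1}\oo_{X}}L$ for any left module $L$, so $\sH_{Y\to X}$ is flat on the right and $\map^{*}$ is exact — provided it lands in $\LMod{\sH_{\map^{*}\omega,\map^{*}\bc}(Y,W)}$.

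The hard part is to show that the subsheaf $\sH_{Y\to X}\subseteq(j_{Y})_{*}(\dd_{Y^{\circ}\to X^{\circ}}\rtimes W)$ is stable under the left action of $\sH_{\map^{*}\omega,\map^{*}\bc}(Y,W)$; this is the only place the melys condition and the normalisation of $\map^{*}\bc$ are needed. As $\sH_{Y\to X}$ is visibly an $\oo_{Y}\rtimes W$-submodule generated, for the left $\oo_{Y}\rtimes W$- and right $\map^{-1}\sH_{\omega,\bc}(X,W)$-actions, by $1$, and as $[D'_{u},g]=u(g)$ and $D'_{gu}=gD'_{u}$ in $\sH_{\map^{*}\omega,\map^{*}\bc}(Y,W)$, stability reduces to $D'_{u}\cdot 1\in\sH_{Y\to X}$ for $u$ a vector field; since $\sH_{Y\to X}$ is reflexive and the ambient sheaf is determined along $Y^{\circ}$, it suffices to check this near the generic point of each reflection hyperplane $Z'$ of $Y$, in local coordinates $y_{1},\dots$ on $Y$ (with $Z'=\{y_{1}=0\}$) and $x_{1},\dots$ on $X$ (with $Z:=\overline{\map(Z')}=\{x_{1}=0\}$), after trivialising the twisted differential operators. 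Let $e$ be the multiplicity of $Z'$ in the Cartier divisor $\map^{*}Z$, so $\map^{*}x_{1}$ vanishes to order $e$ along $Z'$; here melys enters, forcing $\map^{-1}(Z)\subseteq Y^{w}$ for every $w$ fixing $Z$ pointwise, so the group elements appearing in the Dunkl corrections along $Z$ and along $Z'$ coincide. Writing $D'_{\partial_{y_{1}}}\cdot 1=\widetilde{\partial_{y_{1}}}\cdot 1+(\text{Dunkl correction of }D'_{\partial_{y_{1}}})\cdot 1$, the first term is $d\map(\partial_{y_{1}})=\sum_{k}\bigl(\partial(\map^{*}x_{k})/\partial y_{1}\bigr)\partial_{x_{k}}$ acting on $1$; rewriting each $\partial_{x_{k}}$ as $D_{\partial_{x_{k}}}$ minus its Dunkl correction on $X$ and pulling back, the only singular term produced is $-\sum_{w}e\,\gamma_{w,Z}\,f_{Z'}^{-1}\otimes(w-1)$, arising because $\partial(\map^{*}x_{1})/\partial y_{1}$ vanishes to order $e-1$ while $\map^{*}(x_{1}^{-1})$ has a pole of order $e$ along $Z'$. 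This is cancelled exactly by the Dunkl correction $\sum_{w}\gamma'_{w,Z'}f_{Z'}^{-1}\otimes(w-1)$ of $D'_{\partial_{y_{1}}}$ precisely because $\map^{*}\bc$ is defined so that $\gamma'_{w,Z'}=e\,\gamma_{w,Z}$ (i.e.\ $\map^{*}\bc$ at $(w,Z')$ is $e$ times $\bc$ at $(w,Z)$), the pulled-back twist $\map^{*}\omega$ contributing nothing singular; what remains is plainly a section of $\oo_{Y}\otimes_{\map^{-1}\oo_{X}}\map^{-1}\sH_{\omega,\bc}(X,W)=\sH_{Y\to X}$. Carrying this pole cancellation out in general (several $w$ through $Z'$, several components of $\map^{*}Z$, nonzero $\omega$) will be the main technical burden; once it is done, $\sH_{Y\to X}$ is a left $\sH_{\map^{*}\omega,\map^{*}\bc}(Y,W)$-module, hence — the action commuting with the right one — a bimodule, and together with the previous paragraph this yields the exact functor $\map^{*}$.
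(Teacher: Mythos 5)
Your construction is essentially the paper's own proof: your transfer bimodule $\sH_{Y\to X}$ is exactly $\map^*\sH_{\omega,\bc}(X,W)$ with its left $\sH_{\map^*\omega,\map^*\bc}(Y,W)$-action defined inside the localized sheaf $\dd\rtimes W$, your pole-cancellation at the generic points of the $Z'$ is the paper's key computation $\gamma(D_p)=\sum_i g^i\otimes D_{q^i}$ (resting on the identity relating $\map^* d\log f_Z$ to the $d\log f_{Z'}$ weighted by the intersection multiplicities that define $\map^*\bc$, with melys guaranteeing the same reflections $w$ appear on both sides), and exactness again comes from flatness of $\map$ on quasi-coherent sheaves. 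The only differences are presentational — you reduce the verification to codimension-one points via normality of the first-order piece $\map^*\F^1_{\omega,\bc}(X,W)$ and check the leading pole, where the paper writes out the identity on a $W$-stable affine chart — together with the harmless slip that $[D'_u,g]$ equals $u(g)$ only modulo reflection terms lying in $\mc{O}_Y\rtimes W$, which does not affect your reduction.
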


The pull-back functor is particularly well-behaved when $\map$ is etal\'e. We define the melys site over $X$, a certain modification of the usual etal\'e site over $X$. Using Theorem \ref{thm:pullbackintro}, we show that the Cherednik algebra forms a sheaf on this site. 

One particularly rich source of melys morphisms is when $\pi : Y \rightarrow X$ is a principal $T$-bundle, where $T$ is a torus acting on $Y$ with the action commuting with the action of $W$. In this situation, one can perform quantum Hamiltonian reduction of the Cherednik algebra $\sH_{\bc}(Y,W)$ on $Y$ to get a sheaf $\sH_{\beta(\chi),\bc}(X,W)$ of Cherednik algebras on $X$. As a consequence, one gets an equivalence between the category of ($\chi$-twisted) $T$-equivariant $\sH_{\bc}(Y,W)$-modules on $Y$ and the category of $\sH_{\beta(\chi),\bc}(X,W)$-modules on $X$. 

\begin{thm}\label{thm:monodromicequivintro}
Let $\chi \in \mf{t}^*$. We have an isomorphism of sheaves of algebras on $X$,
$$
\sH_{\beta(\chi),\bc}(X,W) \simeq (\pi_{\idot} \sH_{\bc}(Y,W))^T / \langle \{ t - \chi(t) \ | \ t \in \mf{t} \} \rangle,
$$
and the functor 
$$
\LMod{(\sH_{\bc}(X, W),T,\chi)} \longrightarrow \LMod{\sH_{\beta(\chi),\bc}(Y, W)}
$$
given by $\mm \mapsto (\pi_{\idot} \mm)^T$ is an equivalence of categories, with quasi-inverse $\ms{N} \mapsto \pi^* \ms{N}$. 
\end{thm}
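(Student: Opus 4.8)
The plan is to reduce the statement to a standard quantum Hamiltonian reduction argument, using the fact that $\pi : Y \to X$, being a principal $T$-bundle, is in particular melys, so that Theorem \ref{thm:pullbackintro} supplies an exact pull-back functor $\pi^*$. First I would establish the isomorphism of sheaves of algebras. Since $\pi$ is affine, $\pi_\idot \sH_{\bc}(Y,W)$ is a sheaf of algebras on $X$, and the $T$-action on $Y$ commuting with $W$ induces a $T$-action on it; the invariants $(\pi_\idot \sH_{\bc}(Y,W))^T$ form a sheaf of algebras containing the image of the comoment map $\mf{t} \to (\pi_\idot \sH_{\bc}(Y,W))^T$ coming from the $T$-action (here one uses that $\sH_{\bc}(Y,W)$ carries a Hamiltonian $T$-action because the $T$-action on $Y$ lifts to vector fields, hence to $\dd_Y \rtimes W \subset \sH_{\bc}(Y,W)$, and the torus being connected reductive the lift is canonical). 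One then shows that quotienting by the two-sided ideal generated by $t - \chi(t)$, $t \in \mf{t}$, yields precisely $\sH_{\beta(\chi),\bc}(X,W)$: locally on $X$ one trivializes the bundle, $\pi^{-1}(U) \cong U \times T$, computes $\dd_{U \times T} \rtimes W$ (the $W$-action being along $U$ only since $T$ commutes with $W$), takes $T$-invariants and kills $t - \chi(t)$, recovering $\dd_U^\chi \rtimes W$ where $\dd_U^\chi$ is twisted differential operators; the deformation parameters track through because the reflection hyperplanes of $W$ in $Y$ are exactly the preimages under $\pi$ of those in $X$ (this is the melys condition), so $\bc$ is unchanged and $\omega$ picks up the term $\beta(\chi)$ encoding the twist. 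This is the computation the definition of $\beta$ in section \ref{sec:mainthm} is designed to make work.

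Next I would set up the two functors. Given $\mm \in \LMod{(\sH_{\bc}(Y,W),T,\chi)}$, i.e.\ a $\sH_{\bc}(Y,W)$-module with a compatible $\chi$-twisted $T$-equivariant structure, the sheaf $(\pi_\idot \mm)^T$ is naturally a module over $(\pi_\idot \sH_{\bc}(Y,W))^T$ on which $t - \chi(t)$ acts by zero (by the twisted-equivariance condition), hence a $\sH_{\beta(\chi),\bc}(X,W)$-module. Conversely, for $\ms{N}$ a $\sH_{\beta(\chi),\bc}(X,W)$-module, $\pi^* \ms{N} = \sH_{\bc}(Y,W) \otimes_{\pi^{-1}\sH_{\beta(\chi),\bc}(X,W)} \pi^{-1}\ms{N}$ (the pull-back of Theorem \ref{thm:pullbackintro}, which in this torsor case is just $\oo_Y \otimes_{\pi^{-1}\oo_X} \pi^{-1}\ms{N}$ as a sheaf, with its natural $\sH_{\bc}(Y,W)$-module structure) carries a canonical $\chi$-twisted $T$-equivariant structure coming from the $T$-action on $Y$. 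Both constructions are clearly functorial and exact — exactness of $\pi^*$ is Theorem \ref{thm:pullbackintro}, and exactness of $(\pi_\idot -)^T$ follows since $\pi$ is affine and $T$ is reductive so taking $T$-invariants is exact.

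It then remains to prove the two functors are mutually quasi-inverse, and this can be checked locally on $X$ after trivializing the bundle, so it reduces to the case $Y = X \times T$ with $T$ acting by translation on the second factor. There the claim is the elementary (twisted) descent statement: $\chi$-twisted $T$-equivariant $\dd_{X \times T} \rtimes W$-modules are the same as $\dd_X^\chi \rtimes W$-modules, with the equivalence given by taking $T$-invariant pushforward one way and $\oo_T$-twisted pullback the other; the unit and counit of adjunction are identified by a direct computation using that a $\chi$-twisted $T$-equivariant $\oo_{X \times T}$-module is freely generated over $\oo_X$ by its $T$-invariants (a twisted Peter–Weyl / Fourier decomposition along $T$, using $\chi \in \mf{t}^*$ to fix the character). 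The main obstacle, and the place requiring genuine care rather than routine bookkeeping, is the global construction and verification of the algebra isomorphism: one must check that the comoment map image generating the reduction ideal is well defined independently of the local trivialization (i.e.\ the Hamiltonian $T$-action on $\sH_{\bc}(Y,W)$ is canonical and $W$-equivariant), and that the parameter $\beta(\chi)$ produced by the local computation glues to the intended global $\omega$-parameter — equivalently, that the class of the twist in $H^1(X, \dd^{\le 1}/\oo)$ or the relevant deformation-theoretic group is correctly computed by $\beta(\chi)$. Once the algebra isomorphism is nailed down, the module-category equivalence is formal plus the local descent computation above.
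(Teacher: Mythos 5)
Your overall strategy---quantum Hamiltonian reduction plus descent for $(T,\chi)$-equivariant modules---is the right one, and your treatment of the categorical half (local trivialization of the bundle, Fourier decomposition along $T$, exactness of $\pi^*$ and of $T$-invariants) is essentially what the paper does, citing \cite{BBJantzen}. But there are two genuine holes in your argument for the algebra isomorphism, which is the substantive part of the statement. First, your construction of the comoment map rests on the claim that the $T$-action lifts ``to $\dd_Y \rtimes W \subset \sH_{\bc}(Y,W)$''; this containment is false: when $\bc \neq 0$ the Cherednik algebra and $\dd_Y \rtimes W$ are two different subalgebras of the pushforward of $\dd \rtimes W$ from the complement of the reflection divisor, and neither contains the other. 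What you actually need---and what the paper proves as a separate lemma, by the local computation $\sH_{\bc}(X \times T, W) = \sH_{\bc}(X,W) \boxtimes \dd_T$---is that the specific vector fields $\mu(t)$, $t \in \mf{t}$, lie in $\F^1_{\bc}(Y,W)$, i.e.\ that their Dunkl correction terms vanish. This is fixable, but it is a statement requiring proof, not a consequence of an inclusion of sheaves of algebras.

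Second, and more seriously, you never prove that the twist produced by the reduction is $\beta(\chi)$: you yourself call this ``the main obstacle'' and leave it as a check ``requiring genuine care,'' but since the assertion $\sH_{\beta(\chi),\bc}(X,W) \simeq (\pi_{\idot}\sH_{\bc}(Y,W))^T/\langle \mu_{\bc}(t)-\chi(t)\rangle$ is precisely an identification of that twist, this is the crux and cannot be deferred. The paper closes it without any by-hand gluing computation: on $U = X - D$, the complement of the reflection divisor supporting $\bc$, the Cherednik algebra is just $\dd_U \rtimes W$, so the appendix result on reduction of differential operators along a principal $T$-bundle (Proposition \ref{prop:principaltwist}) already identifies the reduction over $U$ with $\dd_U^{\beta(\chi)} \rtimes W$; since both $(\pi_{\idot}\sH_{\bc}(Y,W))^T$ and $\sH_{\beta(\chi),\bc}(X,W)$ embed in $\dd_X^{\beta(\chi)}(D) \rtimes W$, this yields a global map $\tau$, and the only remaining points---that $\ker \tau$ is exactly the moment-map ideal and that the image is exactly $\sH_{\beta(\chi),\bc}(X,W)$---are local and reduce to $\dd(T)^T/\langle t - \chi(t)\rangle \simeq \C$. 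Either adopt this device or genuinely compute the gluing class; as written, your argument shows the reduction is a sheaf of Cherednik algebras with \emph{some} twist, not that the twist is $\beta(\chi)$.
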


\subsection{} We also study a natural generalization of the Knizhnik-Zamolodchikov connection. The question of whether the Knizhnik-Zamolodchikov connection is flat is closely related to the issue of presenting the Cherednik algebra. In the appendix we summarize, for the reader unfamiliar with sheaves of twisted differential operators, those basic properties of TDOs that we require.

\subsection{Acknowledgments}

The authors would like to thank Collin Ingalls, Anne Shepler and Cedric Bonnaf\'e for fruitful discussions. The first author is supported by the EPSRC grant EP-H028153.

\section{Sheaves of Cherednik algebras}

In this section we introduce sheaves of Cherednik algebras on a smooth variety. 

\subsection{Conventions}\label{sec:conv} Throughout, all our spaces will be equipped with the action of a finite group $W$. The morphisms $\map : Y \rightarrow X$ that we will consider will always be assumed to be $W$-equivariant. Since we wish to deal with objects such as $\mc{O}_X \rtimes W$, we work throughout with the \textit{$W$-equivariant Zariski topology}; a subset $U \subset X$ is an open subset in this topology if and only if it is  open in the Zariski topology and $W$-stable. Then, $\mc{O}_X \rtimes W$ becomes a sheaf on $X$. If $w \in W$, then $X^w$ denotes the set of all points fixed under the automorphism $w$. The sheaf of vector fields, resp. one-forms, on a smooth variety $X$ is denoted $\Theta_X$, resp. $\Omega^1_X$. 

\subsection{} Let $X$ be a smooth, connected, quasi-projective variety over $\C$. Let $Z$ be a smooth subvariety of $X$ of codimension one. Locally, the ideal defining $Z$ is principal, generated by one section, $f_Z$ say. Then, the element  
$$
d \log f_Z := \frac{d f_Z}{f_Z} 
$$
is a section of $\Omega^1_X(Z) = \Omega^1_X \o \mc{O}_X(Z)$. Contraction defines a pairing $\Theta_X \o \Omega^1_X (Z) \rightarrow \mc{O}_X(Z)$, $(\nu,\omega) \mapsto \omega \wedge \nu$. Let $\Omega_X^{1,2}$ be the two term subcomplex $\Omega_X^1 \stackrel{d}{\longrightarrow} (\Omega_X^2)^{\mathrm{cl}}$, concentrated in degrees $1$ and $2$, of the algebraic de-Rham complex of $X$, where $(\Omega_X^2)^{\mathrm{cl}}$ denotes the subsheaf of closed forms in $\Omega_X^2$. As noted in the appendix, sheaves of twisted differential operators on $X$ are parameterized, up to isomorphism, by the second hypercohomology group $\HH^2(X,\Omega_X^{1,2})$. Given $\omega \in \HH^2(X,\Omega_X^{1,2})$, the corresponding sheaf of differential operators is denoted $\dd_{X}^{\omega}$. 

\subsection{Dunkl-Opdam operators} Let $W$ be a finite group acting on $X$. Let $\mc{S}(X)$ be the set of pairs $(w,Z)$ where $w \in W$ and $Z$ is a connected component of $X^w$ of codimension one. Any such $Z$ is smooth. A pair $(w,Z)$ in $\mc{S}(X)$ will be referred to as a \textit{reflection} of $(X,W)$. The group $W$ acts on $\mc{S}(X)$ and we fix $\bc : \mc{S}(X) \rightarrow \C$ to be a $W$-equivariant function. For $\omega \in \HH^2(X,\Omega_X^{1,2})^W$, let $\mc{P}^{\omega}$ be the associated $W$-equivariant Picard algebroid on $X$, with anchor map $\sigma : \mc{P}^{\omega} \rightarrow \Theta_X$. We fix an open affine, $W$-stable covering $\{ U_i \}$ of $X$ such that $\mathrm{Pic} (U_i) = 0$ for all $i$. Then, we can choose functions $f_{Z,i}$ defining $U_i \cap Z$. The union of all $Z$'s is denoted $D$. If $j : X - D \hookrightarrow X$ is the inclusion, then write $\mc{P}^{\omega} (D)$ for the sheaf $j_{\idot} (\mc{P}^{\omega} |_{X - D})$.   

\begin{defn}
To each $\nu \in \Gamma(U_i,\mc{P}^{\omega})$, the associated Dunkl-Opdam operator is 
\beq{eq:DOop}
D_{\nu} = \nu + \sum_{(w,Z) \in \mc{S}(X)} \frac{2 \bc (w,Z)}{1 - \lambda_{w,Z}} (d \log f_{Z,i} \wedge \sigma (\nu)) (w - 1),
\eeq
where $\lambda_{w,Z}$ is the eigenvalue of $w$ on each fiber of the conormal bundle of $Z$ in $X$. 
\end{defn}

The operator $D_v$ is a section of $\mc{P}^{\omega}(D) \rtimes W$ over $U_i$. The $\Gamma(U_i,\mc{O}_X \rtimes W)$-submodule of $\mc{P}^{\omega}(D) \rtimes W$ generated by all Dunkl-Opdam operators $\{ D_v \ | \ v \in \Gamma(U_i,\mc{P}^{\omega}) \}$ and $\Gamma(U_i,\mc{O}_X \rtimes W)$ is denoted $\Gamma(U_i, \mc{F}^1_{\omega,\bc}(X,W))$. Though the definition of the Dunkl-Opdam operator $D_v$ depends on the choice of functions $f_{Z,i}$, it is easy to see that the submodule $\Gamma(U_i, \mc{F}^1_{\omega,\bc}(X,W))$ of $\Gamma(U_i, \mc{P}^{\omega}(D) \rtimes W)$ is independent of any choices. The modules $\Gamma(U_i, \mc{F}^1_{\omega,\bc}(X,W))$ glue to form a sheaf (in the $W$-equivariant Zariski topology!) $\mc{F}^1_{\omega,\bc}(X,W)$ on $X$. As noted in the remark after \cite[Theorem 2.11]{ChereSheaf}, a calculation in each formal neighborhood of $X$ shows that $[D_{\nu_1},D_{\nu_2}] \in \mc{F}^1_{\omega,\bc}(X,W)$ for all $\nu_1, \nu_2 \in \mc{P}^{\omega}$. However, there is \textit{no} natural bracket on $\mc{F}^1_{\omega,\bc}(X,W)$. The anchor map $\sigma : \mc{P}^{\omega}(D) \o W \rightarrow \Theta_X(D) \o W$ restricts to a map $\mc{F}^1_{\omega,\bc}(X,W) \rightarrow \Theta_X \o W$ which fits into a short exact sequence
\beq{eq:ses}
0 \rightarrow \mc{O}_X \rtimes W \rightarrow \mc{F}^1_{\omega,\bc}(X,W) \stackrel{\sigma}{\longrightarrow} \Theta_X \o W \rightarrow 0.
\eeq

\begin{defn}
The subsheaf of algebras of $j_{\idot} (\dd_{X - D}^{\omega} \rtimes W)$ generated by $\mc{F}^1_{\omega,\bc}(X,W)$ is called the \textit{sheaf of Cherednik algebras} associated to $W,\omega$ and $\bc$. It is denoted $\sH_{\omega,\bc}(X,W)$.   
\end{defn}

The global sections of $\sH_{\omega,\bc}(X,W)$ are denoted $\H_{\omega,\bc}(X,W)$.

\subsection{}\label{sec:PBW} There is a natural order filtration $\F^{\idot}_{\omega,\bc}(X,W)$ on $\sH_{\omega,\bc}(X,W)$, defined in one of two ways. Either, one defines $\F^{\idot}_{\omega,\bc}(X,W)$ to be the restriction to $\sH_{\omega,\bc}(X,W)$ of the order filtration on $j_{\idot} (\dd_{X - D}^{\omega} \rtimes W)$. Or, equivalently, by giving elements in $\F^1_{\omega,\bc}(X,W)$ degree at most one, with $D \in \F^1_{\omega,\bc}(X,W)$ having degree one if and only if $\sigma(D) \neq 0$, and then defining the filtration inductively by setting $\F^i_{\omega,\bc}(X,W) = \F^{1}_{\omega,\bc}(X,W)  \F^{i-1}_{\omega,\bc}(X,W)$.  By definition, the filtration is exhaustive. Let $\pi : T^* X \rightarrow X$ be the projection map. Etingof has shown in Theorem 2.11 of \cite{ChereSheaf} that the algebras $ \sH_{\omega,\bc}(X,W)$ are a flat deformation of $\dd_X \rtimes W$. Equivalently, the PBW property holds for Cherednik algebras: 

\begin{thm}\label{thm:PBW}
We have $\gr_{\F} \sH_{\omega,\bc}(X,W) \simeq \pi_{\idot} \mc{O}_{T^* X} \rtimes W$. 
\end{thm}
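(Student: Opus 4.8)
The plan is to reduce the statement to a local, formal-neighborhood calculation, which is essentially where Etingof's original argument lives, and then to make the gluing transparent using the order filtration. First I would observe that the claim is local on $X$ in the $W$-equivariant Zariski topology, so it suffices to verify the isomorphism on each member $U_i$ of the chosen affine covering, and then check that the local isomorphisms are compatible with restriction (which is automatic since they are all induced by the anchor map and the inclusion into $j_{\idot}(\dd^\omega_{X-D}\rtimes W)$, both of which are defined globally). On such a $U_i$ we have chosen trivializing sections $f_{Z,i}$, so the Dunkl--Opdam operators $D_\nu$ of \eqref{eq:DOop} are honestly defined, and $\Gamma(U_i,\F^1_{\omega,\bc}(X,W))$ is the $\Gamma(U_i,\oo_X\rtimes W)$-module they generate together with $\oo_X\rtimes W$; by \eqref{eq:ses} its associated graded in degree $\le 1$ is $(\oo_X\rtimes W)\oplus(\Theta_X\otimes W)$, which matches $\pi_{\idot}\oo_{T^*X}\rtimes W$ in degrees $0$ and $1$.

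The heart of the matter is to promote this to all degrees. The natural candidate map $\pi_{\idot}\oo_{T^*X}\rtimes W \to \gr_\F\sH_{\omega,\bc}(X,W)$ is the surjection of graded $(\oo_X\rtimes W)$-algebras determined in degree one by sending a vector field $\nu$ (viewed as a function linear on the cotangent fibers) to the principal symbol of $D_\nu$; this is well defined and surjective because $\F^i = \F^1\cdot\F^{i-1}$ and $\gr_\F$ is generated in degree $\le 1$. So the entire content is \emph{injectivity}, equivalently the statement that there are no unexpected relations among products of Dunkl--Opdam operators beyond those forced by the symbol. I would prove this by a filtered/completion argument: it is enough to check injectivity after completing at every closed point $x\in X$, and at such a point one may pass to $\wh{\oo}_{X,x}$ and work $W_x$-equivariantly, where $W_x$ is the stabilizer of $x$. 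By the standard "formal neighborhood" technique (exactly the one invoked in the excerpt after Theorem 2.11 of \cite{ChereSheaf}, and used again for the bracket), the pair $(\wh{\oo}_{X,x},W_x)$ is $W_x$-equivariantly isomorphic to the completion of a linear action of $W_x$ on its tangent space $T_xX$ at the origin; under this identification the $D_\nu$ become, up to the usual $d\log$ factors, the classical Dunkl operators, and the PBW theorem for the completed rational Cherednik algebra (Etingof--Ginzburg) gives the required freeness. Equivalently, one can cite that $\sH_{\omega,\bc}(X,W)$ is locally, in the formal topology, isomorphic to a completed rational Cherednik algebra, for which PBW is \cite{EG}.

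Assembling: the local surjections $\pi_{\idot}\oo_{T^*X}\rtimes W \onto \gr_\F\sH_{\omega,\bc}(X,W)$ over the $U_i$ are injective by the completion argument, hence isomorphisms, and since they are all restrictions of the single anchor/symbol map they glue to a global isomorphism of sheaves of graded algebras. The twisting by $\omega$ plays no role here because $\gr_\F\dd^\omega_{X-D}\cong\pi_{\idot}\oo_{T^*(X-D)}$ independently of $\omega$, so the associated graded only sees the underlying variety, not the TDO class.

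I expect the main obstacle to be the injectivity step, and more precisely making the formal-neighborhood reduction clean: one must check that the identification of $(\wh{\oo}_{X,x},W_x)$ with a linearized action carries the sheaf $\F^1_{\omega,\bc}$ (which a priori depends on the chosen $f_{Z,i}$ and on $\omega$) to the one defining the rational Cherednik algebra on $T_xX$, up to an isomorphism that is filtered and therefore harmless on $\gr$. The factor $d\log f_{Z,i}$ differs from the "linear" choice $d\log(\ell_Z)$ by an invertible function, which changes $D_\nu$ only by an element of $\oo_X\rtimes W$ of strictly lower order; likewise the anchor $\sigma$ of $\pp^\omega$ agrees with the ordinary one on symbols. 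So on associated graded all these ambiguities disappear, and the remaining input is genuinely the classical PBW theorem. The write-up is short precisely because Etingof has already done the hard local computation; the only thing to add is the bookkeeping that turns his local statement into the global graded isomorphism asserted here.
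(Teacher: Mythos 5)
The paper does not actually prove Theorem \ref{thm:PBW}: it is quoted verbatim from Etingof's Theorem 2.11 in \cite{ChereSheaf}, so there is no internal argument to compare yours against. Your reconstruction follows the same strategy as the cited source (build the symbol map from $\pi_{\idot}\mc{O}_{T^*X}\rtimes W$, note surjectivity from generation in degree $\le 1$ and well-definedness from $[D_{\nu_1},D_{\nu_2}]\in\F^1_{\omega,\bc}(X,W)$, then prove injectivity by a formal-local reduction to the rational Cherednik algebra and the PBW theorem of \cite{EG}), and in outline it is correct; the observations that the twist $\omega$ and the choice of $f_{Z,i}$ only perturb $D_\nu$ by order-zero terms, hence are invisible on the associated graded, are exactly the right points to make. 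The one step you gloss over is the completion bookkeeping: completing the coherent sheaves $\F^i_{\omega,\bc}(X,W)$ at a single closed point $x$ is not compatible with the action of those $w\in W$ with $wx\neq x$, so the completed object at $x$ is not itself a completed rational Cherednik algebra for $W_x$. One must either complete along the $W$-orbit of $x$ (obtaining a matrix algebra over the completed rational Cherednik algebra of $W_x$ acting on $T_xX$, with only the reflection hypersurfaces through the orbit contributing) or first restrict to a suitable $W$-stable affine neighbourhood and induct from $W_x$ to $W$; this is precisely how the local computation is organized in \cite{ChereSheaf}. With that adjustment, and since the kernel of your surjection is a coherent $\mc{O}_X$-module in each filtration degree so that faithful flatness of completion detects its vanishing, your argument goes through and reproduces the cited proof.
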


We note for later use that Theorem \ref{thm:PBW} implies that, for any affine $W$-stable open set $U \subset X$, the algebra $\Gamma(U, \sH_{\omega,\bc}(X,W))$ has finite global dimension; its global dimension is bounded by $2 \dim X$. 

\subsection{} Throughout, a $\sH_{\omega,\bc}(X,W)$-module will always mean a $\sH_{\omega,\bc}(X,W)$-module that is quasi-coherent over $\mc{O}_X$. The category of all $\sH_{\omega,\bc}(X,W)$-modules is denoted $\LMod{\sH_{\omega,\bc}(X,W)}$ and the full subcategory of all modules coherent over $\sH_{\omega,\bc}(X,W)$ is denoted $\Lmod{\sH_{\omega,\bc}(X,W)}$. A module $\ms{M} \in \LMod{\sH_{\omega,\bc}(X,W)}$ is called \textit{liss\'e} if it is coherent over $\mc{O}_X$. 

\section{Pull-back of sheaves}

In this section we show that modules for sheaves of Cherednik algebras can be pulled back under morphisms that are "melys" for the parameter $\bc$. 

\subsection{} Let $\map : Y \rightarrow X$ be a $W$-equivariant morphism between smooth, connected, quasi-projective varieties. As explained in the appendix, given a Picard algebroid $\mc{P}_X^{\omega}$ on $X$, there is a $\map$-morphism $\mc{P}_Y^{\map^* \omega} \rightarrow \map^* \mc{P}_X^{\omega}$. This implies that the sheaf $\map^* \dd_X^{\omega}$ is a left $\dd_Y^{\map^* \omega}$-module. We give conditions on the map $\map$ so that there exists a sheaf of Dunkl operators $\F^1_{\map^* \omega,\map^* \bc}(Y,W)$ on $Y$ and morphism of $\mc{O}_Y \rtimes W$-modules $\F^1_{\map^* \omega,\map^* \bc}(Y,W) \rightarrow \map^* \F^1_{\omega,\bc}(X,W)$. As a consequence $\map^* \sH_{\omega,\bc}(X,W)$ becomes a left $\sH_{\map^* \omega,\map^* \bc}(Y,W)$-module and we can pull-back $\sH_{\omega,\bc}(X,W)$-modules to $\sH_{\map^* \omega,\map^* \bc}(Y,W)$-modules. 

\subsection{} If the morphism $\map$ is flat of relative dimension $r$, then there is a good notion of pull-back of algebraic cycles $\map^* : C_k(X) \rightarrow C_{k+r}(Y)$, where $C_k(X)$ is the abelian group of $k$-dimensional algebraic cycles on $X$. See section 1.7 of \cite{FultonIntersection}. The class  in $C_k(X)$ of a $k$-dimensional subscheme $Z$ of $X$ is denoted $[Z]$. 

\begin{lem}\label{lem:irrcomp}
Let $\map : Y \rightarrow X$ be flat and $(w,Z) \in \mc{S}(X)$. Write $\map^* [Z] = \sum_{i} n_i [Z_i]$, where each $Z_i$ is an irreducible subvariety of $Y$. Then, $w$ permutes the $[Z_i]$. Moreover, if $\map^{-1}(Z)$ is set-theoretically contained in $Y^w$, then each irreducible component of $\map^{-1}(Z)$ is a connected component of $Y^{w}$ of codimension one. 
\end{lem}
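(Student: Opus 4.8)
The plan is to prove the two assertions of Lemma~\ref{lem:irrcomp} in turn, using flatness to control both the $W$-action on the cycle-theoretic pull-back and the local geometry of $\map^{-1}(Z)$.

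\textbf{The $W$-action on the components.} First I would observe that since $\map$ is $W$-equivariant and $(w,Z) \in \mc{S}(X)$ has $w \cdot Z = Z$ (as $Z$ is a connected component of $X^w$, which is $w$-stable), the automorphism $w$ of $Y$ carries $\map^{-1}(Z)$ to itself. Because pull-back of cycles along a flat morphism commutes with flat base change and in particular with the isomorphism $w$ (see \cite[\S1.7]{FultonIntersection}), we get $w^* (\map^* [Z]) = \map^* (w^* [Z]) = \map^* [Z]$, i.e. $\sum_i n_i [w^{-1}(Z_i)] = \sum_i n_i [Z_i]$. Matching irreducible components on both sides (and using that $w$ permutes irreducible subvarieties while preserving the multiplicities, since the local rings at generic points are carried isomorphically to one another) shows that $w$ permutes the set $\{[Z_i]\}$, as claimed.

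\textbf{The components of $\map^{-1}(Z)$ when it lies in $Y^w$.} Now assume $\map^{-1}(Z) \subseteq Y^w$ set-theoretically. Fix an irreducible component $Z_i$ of $\map^{-1}(Z)$ with generic point $\eta$. Since $Z$ has codimension one in $X$ and $\map$ is flat, the going-down property gives $\codim_Y Z_i = \codim_X Z = 1$; concretely, $\mc{O}_{Y,\eta}$ is flat over $\mc{O}_{X,\map(\eta)}$, the latter being a one-dimensional regular local ring (as $Z$ is smooth of codimension one), and a generator $f_Z$ of its maximal ideal pulls back to a nonzerodivisor cutting out $Z_i$ near $\eta$, so $\dim \mc{O}_{Y,\eta} = 1$. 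Thus $Z_i$ is a codimension-one subvariety of the smooth variety $Y$, hence a Cartier divisor, and in particular $Y$ is smooth at the generic point of $Z_i$ along $Z_i$ --- I should be a bit careful here and instead argue at a general \emph{closed} point of $Z_i$: at such a point $y$, $Y$ is smooth, $Z_i$ is a smooth hypersurface near $y$ (shrinking if necessary, using genericity), and $y \in Y^w$. Then $w$ fixes $y$ and stabilizes the smooth hypersurface $Z_i$ through $y$, so $Z_i$ (near $y$) is contained in $Y^w$; since $Y^w$ is smooth and $Z_i$ is a connected codimension-one locally closed smooth subvariety contained in it, and $Y^w$ itself has codimension $\geq 1$ wherever it meets $Z_i$, a dimension count forces a connected component of $Y^w$ containing $Z_i$ to have codimension exactly one and to coincide with $Z_i$ generically; taking closures identifies $Z_i$ with a codimension-one connected component of $Y^w$.

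\textbf{Main obstacle.} The delicate point is the second part: passing from ``$\map^{-1}(Z)$ is \emph{set-theoretically} inside $Y^w$'' to ``each irreducible component of $\map^{-1}(Z)$ \emph{is} a connected component of $Y^w$ of codimension one.'' One must rule out that a component $Z_i$ sits as a proper closed subvariety inside a larger component of $Y^w$, and the way to do this is precisely the flatness-driven codimension computation above: $Z_i$ has codimension exactly one in $Y$, whereas any component of $Y^w$ properly containing it would have to have codimension $\leq 0$, which is absurd unless they coincide. I would make sure the equidimensionality of $Y^w$-components-meeting-$Z_i$ is handled cleanly (each component of a fixed-point set of a finite-order automorphism of a smooth variety is itself smooth, a standard fact), and that the genericity reductions (shrinking to where $Z_i$ is smooth and where $Y$ is smooth along it) are legitimate since we only care about the generic point / a dense open subset of $Z_i$. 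With those in hand the identification of $Z_i$ with a codimension-one connected component of $Y^w$ is forced.
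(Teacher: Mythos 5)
Your first claim and your computation that each irreducible component $Z_i$ of $\map^{-1}(Z)$ has codimension one in $Y$ are fine and agree with the paper. But the second half of your argument has a genuine gap at exactly the point where the real work lies: you never rule out the possibility that $Y^w = Y$, i.e.\ that $w$ acts trivially on $Y$. You assert that "$Y^w$ itself has codimension $\geq 1$ wherever it meets $Z_i$" and later that a component of $Y^w$ properly containing $Z_i$ "would have to have codimension $\leq 0$, which is absurd" --- but codimension $0$ just means that the relevant connected component of $Y^w$ is all of $Y$, and nothing in the hypotheses makes that absurd on its face. If $Y^w = Y$, then $\map^{-1}(Z) \subset Y^w$ holds trivially while no component of $\map^{-1}(Z)$ is a codimension-one component of $Y^w$, so the conclusion would fail; hence $Y^w \neq Y$ is precisely what must be proved, and it cannot be waved away by the codimension-one computation for $Z_i$, which says nothing about $Y^w$.

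The paper's proof supplies exactly this missing step, and it uses flatness again in an essential way: if $Y^w = Y$, then $\map(Y) \subset X^w$ by $W$-equivariance; since $\map$ is flat (and of finite type), it is an open map, so $\map(Y)$ is a nonempty open subset of the irreducible variety $X$, forcing $X^w = X$; this contradicts the fact that $Z$ is a connected component of $X^w$ of codimension one. Once $Y^w \neq Y$ is known, your concluding dimension count goes through (each connected component of $Y^w$ is smooth, hence irreducible, and a proper closed irreducible subvariety of $Y$ of dimension at least $\dim Z_i$ containing $Z_i$ must equal $Z_i$), and the detour through a general closed point $y$ and the claim that $w$ "stabilizes the smooth hypersurface $Z_i$, so $Z_i$ is contained in $Y^w$" is unnecessary --- that containment is the hypothesis --- and as stated is not a valid inference anyway (stabilizing a hypersurface does not imply fixing it pointwise). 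So: add the flatness-implies-open argument showing $Y^w \neq Y$, and the proof closes.
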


\begin{proof}
The first claim follows from the fact that, set-theoretically, $\map^{-1}(Z) = \bigcup_{n_i \neq 0} Z_i$. Since $\map^{-1}(Z)$ is a union of closed subvarieties of $Y$ of codimension one and $Y$ is assumed to be irreducible, it suffices for the second claim to show that $Y^w \neq Y$.  Assume otherwise. Then, since $\map$ is flat, $\map(Y^w) = \map(Y)$ is open in $X$, but also contained in the closed subvariety $X^w$. Hence $X^w = X$. This contradicts the fact that $Z$ is an irreducible component of $X^w$. 
\end{proof}

\subsection{}\label{sec:melysdefn} Let $\mc{S}_{\bc}(X)$ denote the set of all pairs $(w,Z) \in \mc{S}(X)$ such that $\bc (w,Z) \neq 0$. 

\begin{defn}
The morphism $\map$ is \textit{melys} with respect to $\bc$ if:
\begin{enumerate}
\item $\map$ is flat. 
\item For all $(w,Z) \in \mc{S}_{\bc}(X)$, set-theoretically $\map^{-1}(Z) \subset Y^w$. 
\end{enumerate}
\end{defn}

If $\map$ is melys with respect to $\bc$ then we define $\map^*\bc$ on $\mc{S}(Y)$ by 
$$
(\map^* \bc)(w,Z') = \sum_{(w,Z) \in \mc{S}_{\bc}(X)} \langle \map^* [Z], [Z'] \rangle \ \bc(w,Z).
$$
Let $E = \bigcup_{\bc (w,Z) \neq 0} Z$ and $D = \map^{-1}(E)$. Since $\map$ is flat, each irreducible component of $D$ has codimension one in $X$. Let $j : U := X - D \hookrightarrow X$ and $k : V = Y - E \hookrightarrow Y$; these are affine morphisms. For any quasi-coherent sheaf $\mc{F}$ on $X$ (resp. on $Y$), we denote by $\mc{F}(D)$ the sheaf $j_{\idot} (\mc{F} |_U)$ (resp. by $\mc{F}(E)$ the sheaf $k_{\idot} (\mc{F} |_V)$).  

\begin{lem}\label{lem:twistedgammaW}
The sheaf $\map^* \dd^{\omega}_Y(E) \rtimes W$ on $X$ is a $\dd^{\map^* \omega}_X(D) \rtimes W$-module and there exists a morphism 
$$
\gamma : \dd^{\map^* \omega}_X(D) \rtimes W \longrightarrow \map^* \dd^{\omega}_Y(E) \rtimes W
$$
of $\dd^{\map^* \omega}_X(D) \rtimes W$-modules. 
\end{lem}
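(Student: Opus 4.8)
The plan is to build $\gamma$ in two stages: first on the open locus where no localization is needed, using the pull-back formalism for twisted differential operators recalled above (and in the appendix), and then over all of $Y$ by pushing forward along an affine open immersion. (Throughout I read the statement with $E=\bigcup_{\bc(w,Z)\ne 0}Z\subset X$ and $D=\map^{-1}(E)\subset Y$, so that $\dd_Y^{\map^*\omega}(D)\rtimes W$ is a sheaf of algebras on $Y$ and $\map^*\bigl(\dd_X^\omega(E)\rtimes W\bigr)$ a sheaf on $Y$.) Only flatness of $\map$ — the first melys condition — will be used.

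First I would record the non-meromorphic version. The $\map$-morphism $\mc{P}_Y^{\map^*\omega}\to\map^*\mc{P}_X^\omega$ makes $\map^*\dd_X^\omega$ a left $\dd_Y^{\map^*\omega}$-module, and the assignment $P\mapsto P\cdot(1\o 1)$, sending the identity to the canonical section $1\o 1$ of $\map^*\dd_X^\omega=\oo_Y\o_{\map^{-1}\oo_X}\map^{-1}\dd_X^\omega$, is a morphism $\gamma_0\colon\dd_Y^{\map^*\omega}\to\map^*\dd_X^\omega$ of left $\dd_Y^{\map^*\omega}$-modules. Since $\map$ is $W$-equivariant and both Picard algebroids are $W$-equivariant, $\gamma_0$ is $W$-equivariant; since $W$ is finite, $\map^*$ commutes with $(-)\rtimes W$. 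Hence $\gamma_0\rtimes W$ is a morphism of left $\dd_Y^{\map^*\omega}\rtimes W$-modules
\[
\gamma_0\rtimes W\colon\ \dd_Y^{\map^*\omega}\rtimes W\ \too\ \map^*\bigl(\dd_X^\omega\rtimes W\bigr)=(\map^*\dd_X^\omega)\rtimes W .
\]

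Next I would localize along the divisor. By flatness of $\map$ every irreducible component of $D=\map^{-1}(E)$ has codimension one in $Y$, so $D$ is an effective divisor and $k\colon Y-D\into Y$, $j\colon X-E\into X$ are affine open immersions; as $D=\map^{-1}(E)$, the square with horizontal arrows the restriction $\map'\colon Y-D\to X-E$ and $\map$, and vertical arrows $k$, $j$, is cartesian, with $\map'$ again flat. Applying the previous step to $\map'$ (whose relevant twist is the restriction of $\map^*\omega$) gives a left $(\dd_Y^{\map^*\omega}|_{Y-D})\rtimes W$-module structure on $\map'^*\bigl(\dd_X^\omega|_{X-E}\rtimes W\bigr)$ together with its structure morphism $\gamma_0'\rtimes W$. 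I would then push forward along the affine morphism $k$: on one side $k_{\idot}\bigl((\dd_Y^{\map^*\omega}|_{Y-D})\rtimes W\bigr)=\dd_Y^{\map^*\omega}(D)\rtimes W$, and the $k_{\idot}$-push-forward of a module over a sheaf of algebras is a module over the push-forward sheaf of algebras; on the other side, flat base change along the cartesian square (using $\map$ flat and $j$ affine) identifies
\[
k_{\idot}\,\map'^*\bigl(\dd_X^\omega|_{X-E}\rtimes W\bigr)\ \simeq\ \map^*\,j_{\idot}\bigl(\dd_X^\omega|_{X-E}\rtimes W\bigr)\ =\ \map^*\bigl(\dd_X^\omega(E)\rtimes W\bigr)
\]
(equivalently $\map^*\bigl(\dd_X^\omega(E)\bigr)=\map^*\dd_X^\omega\o_{\oo_Y}\oo_Y(D)$, since $\map^*\oo_X(E)=\oo_Y(D)$, using $\Supp\map^*E=\map^{-1}(E)=D$). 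This makes $\map^*\bigl(\dd_X^\omega(E)\rtimes W\bigr)$ a left $\dd_Y^{\map^*\omega}(D)\rtimes W$-module, and produces $\gamma:=k_{\idot}(\gamma_0'\rtimes W)$, which restricts to $\gamma_0'\rtimes W$ over $Y-D$ and is therefore compatible with $\gamma_0\rtimes W$.

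The one genuinely non-formal ingredient — and the step I expect to need the most care — is the base-change identification $k_{\idot}\circ\map'^*\simeq\map^*\circ j_{\idot}$, i.e.\ that forming sections with poles along a divisor commutes with flat pull-back; this is exactly where flatness of $\map$ is used. The second melys condition is not needed for this lemma; it will enter only afterwards, when the sheaf of Dunkl--Opdam operators $\mc{F}^1_{\map^*\omega,\map^*\bc}(Y,W)$ on $Y$ and the morphism $\mc{F}^1_{\map^*\omega,\map^*\bc}(Y,W)\to\map^*\mc{F}^1_{\omega,\bc}(X,W)$ are constructed inside $\dd_Y^{\map^*\omega}(D)\rtimes W$ and $\map^*\bigl(\dd_X^\omega(E)\rtimes W\bigr)$.
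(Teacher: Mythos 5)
Your proposal is correct and takes essentially the same route as the paper: you build the morphism from the appendix's $\map$-morphism of Picard algebroids (the structure map $\dd^{\map^*\omega}\to\map^*\dd^{\omega}$), extend it across $\rtimes W$ using $W$-invariance of $\omega$, work over the complement of the divisor, and then push forward along the affine open immersion, identifying the result with the localized sheaves by flat base change on the Cartesian square --- exactly the paper's argument, which likewise uses only flatness of $\map$ and not the second melys condition. Your reading of which sheaf lives on which space is also the intended one (the lemma as printed has $X,Y$ and $D,E$ interchanged relative to the conventions of Section 3 and to how it is cited in the proof of Proposition \ref{lem:chainrule}).
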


\begin{proof}
The map $\map$ restricts to a flat morphism $\Phi : U \rightarrow V$. By Lemma \ref{lem:fmorphism}, we have 
$$
\mc{P}_U^{\Phi^* \omega} \stackrel{\sim}{\longrightarrow} \Phi^* \mc{P}^{\omega}_V \times_{\Phi^* \Theta_V } \Theta_U.
$$
This induces a morphism $\gamma : \dd^{\Phi^* \omega}_U \rightarrow \Phi^* \dd^{\omega}_V$ of $\dd^{\Phi^* \omega}_U$-modules. Since $\omega$ was chosen to be $W$-invariant, this extends to a morphism $\gamma : \dd^{\Phi^* \omega}_U \rtimes W \rightarrow \Phi^* \dd^{\omega}_V \rtimes W$ of $\dd^{\Phi^* \omega}_U \rtimes W$-modules. Since $j_{\idot} \mc{P}_U^{\Phi^* \omega} = \mc{P}_X^{\map^* \omega}(D)$, we have $j_{\idot}( \dd^{\Phi^* \omega}_U \rtimes W) =  \dd^{\map^* \omega}_X(D) \rtimes W$. The diagram 
$$
\xymatrix{
U \ar@{^{(}->}[r]^j \ar[d]_{\Phi} & X \ar[d]^\map \\
V \ar@{^{(}->}[r]^k & Y
}
$$
is Cartesian. Therefore, by flat base change, $j_{\idot} \Phi^* \mc{P}^{\omega}_V \rtimes W = \map^* \mc{P}^{\omega}_Y(E) \rtimes W$ and hence $j_{\idot} (\Phi^* \dd^{\omega}_V \rtimes W) = \map^* \dd^{\omega}_Y(E) \rtimes W$. 
\end{proof}

\subsection{} By analogy with $\map$-morphisms, see Lemma \ref{lem:fmorphism}, we have

\begin{prop}\label{lem:chainrule}
There is a morphism 
$$
\gamma : \sH_{\map^* \omega, \map^* \bc}(Y,W) \longrightarrow \map^* \sH_{\omega, \bc}(X,W)
$$
of $\sH_{\map^* \omega, \map^* \bc}(Y,W)$-modules, that induces an isomorphism of $\mc{O}_Y \rtimes W$-modules 
$$
\psi : \F^1_{\map^* \omega, \map^* \bc}(Y,W) \stackrel{\sim}{\longrightarrow} \map^* \F^1_{\omega, \bc}(X,W) \times_{\map^* \Theta_X \o W} \Theta_Y \o W.
$$ 
\end{prop}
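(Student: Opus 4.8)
The strategy is to build the morphism $\gamma$ locally on the open affine $W$-stable sets $U_i$ of the chosen covering, check it is independent of the defining functions $f_{Z,i}$, and glue. The starting point is Lemma~\ref{lem:twistedgammaW}, which already gives a $\dd^{\map^*\omega}_X(D)\rtimes W$-module morphism $\gamma : \dd^{\map^*\omega}_X(D)\rtimes W \to \map^*\dd^\omega_Y(E)\rtimes W$. I would first observe that, by the short exact sequence \eqref{eq:ses} on $Y$ and the fact that $\map$ is flat (so that $\map^*$ is exact and $\map^*(\mc{O}_X\rtimes W)=\mc{O}_Y\rtimes W$), pulling \eqref{eq:ses} back along $\map$ gives a short exact sequence
$$
0 \to \mc{O}_Y\rtimes W \to \map^*\F^1_{\omega,\bc}(X,W) \stackrel{\map^*\sigma}{\longrightarrow} \map^*(\Theta_X\o W) \to 0,
$$
and similarly \eqref{eq:ses} on $Y$ reads $0\to \mc{O}_Y\rtimes W\to \F^1_{\map^*\omega,\map^*\bc}(Y,W)\to \Theta_Y\o W\to 0$. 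Since the natural map $\Theta_Y\to\map^*\Theta_X$ is compatible with anchors, the fibre product $\map^*\F^1_{\omega,\bc}(X,W)\times_{\map^*\Theta_X\o W}\Theta_Y\o W$ sits in a short exact sequence $0\to \mc{O}_Y\rtimes W\to (\text{fibre product})\to \Theta_Y\o W\to 0$ as well. So by the five lemma it suffices to produce a map $\psi$ between the two middle terms that restricts to the identity on $\mc{O}_Y\rtimes W$ and covers $\id_{\Theta_Y\o W}$; it will then automatically be an isomorphism.

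To construct $\psi$, I work over a fixed $U_i$. A section $\nu'\in\Gamma(\map^{-1}(U_i),\mc{P}_Y^{\map^*\omega})$ maps under $\gamma$ (from Lemma~\ref{lem:twistedgammaW}) to a section of $\map^*\dd^\omega_Y(E)\rtimes W$, and I must show $\gamma(D_{\nu'})$ lies in $\map^*\F^1_{\omega,\bc}(X,W)$ and equals (the image of) $D_{\map_*\nu'}$ up to the correct reflection terms. The heart of the matter is a bookkeeping identity on Dunkl--Opdam operators: writing $\map^*[Z]=\sum_i n_i[Z_i]$, one has, locally, $f_{Z}\circ\map = (\text{unit})\cdot\prod_i f_{Z_i}^{n_i}$, hence
$$
\map^*(d\log f_{Z,i}) = \sum_i n_i\, d\log f_{Z_i} \pmod{\Omega^1_Y},
$$
and the melys hypothesis forces $\map^{-1}(Z)\subset Y^w$, so by Lemma~\ref{lem:irrcomp} each $Z_i$ is a codimension-one component of $Y^w$, i.e.\ $(w,Z_i)\in\mc{S}(Y)$, with the same conormal eigenvalue $\lambda_{w,Z}=\lambda_{w,Z_i}$ (both $w$ acts the same way on the conormal line). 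Plugging into \eqref{eq:DOop} and matching coefficients term by term, the sum over $(w,Z)\in\mc{S}_\bc(X)$ of the pulled-back reflection terms reorganizes into the sum over $(w,Z')\in\mc{S}(Y)$ with coefficient $\sum_{(w,Z)}\langle\map^*[Z],[Z']\rangle\,\bc(w,Z) = (\map^*\bc)(w,Z')$ --- which is exactly the definition of $\map^*\bc$. This shows $\gamma$ carries generators of $\F^1_{\map^*\omega,\map^*\bc}(Y,W)$ into $\map^*\F^1_{\omega,\bc}(X,W)$, and composing with the anchor recovers the given map $\Theta_Y\o W\to\map^*\Theta_X\o W$, so $\gamma$ factors through the fibre product, defining $\psi$. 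Independence of the $f_{Z,i}$ follows as in the absolute case, since changing $f_{Z,i}$ by a unit changes $d\log f_{Z,i}$ by an element of $\Omega^1_X$, which is absorbed into the $\mc{P}^\omega$-part.

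Finally, one checks $\psi$ restricts to the identity on $\mc{O}_Y\rtimes W$ (immediate, since $\gamma$ is $\mc{O}_Y\rtimes W$-linear and the unit section maps to itself) and covers $\id_{\Theta_Y\o W}$ by construction; the five lemma then gives that $\psi$ is an isomorphism. Extending $\gamma$ from $\F^1$ to all of $\sH_{\map^*\omega,\map^*\bc}(Y,W)$ is then formal: $\sH_{\map^*\omega,\map^*\bc}(Y,W)$ is generated as a sheaf of algebras by $\F^1_{\map^*\omega,\map^*\bc}(Y,W)$, it embeds in $k_\idot(\dd^{\map^*\omega}_{V}\rtimes W)$, and the algebra map $\dd^{\map^*\omega}_X(D)\rtimes W\to\map^*\dd^\omega_Y(E)\rtimes W$ underlying $\gamma$ restricts to the desired $\gamma$ on the subalgebra generated by $\F^1$. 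The gluing over the $U_i$ is immediate from the independence of choices. \textbf{The main obstacle} I anticipate is the coefficient-matching computation in \eqref{eq:DOop}: one must be careful that the eigenvalue $\lambda_{w,Z}$ really is constant along the components $Z_i$ and agrees with $\lambda_{w,Z'}$, and that the intersection multiplicities $\langle\map^*[Z],[Z']\rangle$ are precisely the exponents $n_i$ appearing in $\map^*(d\log f_{Z,i})$ --- this is where flatness and the cycle-theoretic pull-back of \cite{FultonIntersection} are essential, and where a naive set-theoretic argument would fail.
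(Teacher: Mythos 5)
Your proposal follows essentially the same route as the paper: restrict the module morphism of Lemma \ref{lem:twistedgammaW}, reduce to showing that $\gamma$ maps $\F^1_{\map^*\omega,\map^*\bc}(Y,W)$ into $\map^*\F^1_{\omega,\bc}(X,W)$ by the local Dunkl--Opdam computation matching $\map^* d\log f_Z$ with the multiplicities defining $\map^*\bc$, and then get that $\psi$ is an isomorphism from the two short exact sequences and the five lemma. The one imprecision is your appeal to ``the algebra map underlying $\gamma$'': for a general melys $\map$ the sheaf $\map^*\dd_X^{\omega}(D)\rtimes W$ is not an algebra (only for etal\'e $\map$) and $\gamma$ is merely a module morphism, so the passage from $\F^1$ to the subalgebra it generates must be made, as in the paper, via the Leibniz-type formula for the action, $a\cdot(g\o p)=\gamma([a,g])\cdot(1\o p)+g(\gamma(a)\cdot(1\o p))$, which suffices because $[a,g]\in\mc{O}_Y\rtimes W$ when $a$ has order one.
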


\begin{proof}
The algebra $\sH_{\map^* \omega, \map^* \bc}(Y,W)$ is a subalgebra of $\dd^{\map^* \omega}_Y(E) \rtimes W$ and $\map^* \sH_{\omega, \bc}(X,W)$ is a subalgebra of $\map^* \dd^{\omega}_X(D) \rtimes W$. Let $\gamma : \sH_{\map^* \omega, \map^* \bc}(Y,W) \rightarrow \map^* \dd^{\omega}_X(D) \rtimes W$ be the restriction of the morphism $\gamma : \dd^{\map^* \omega}_Y(E) \rtimes W \rightarrow \map^* \dd^{\omega}_X(D) \rtimes W$ of Lemma \ref{lem:twistedgammaW}. We claim that it suffices to show that the image of $\gamma$ is contained in $\map^* \sH_{\omega, \bc}(X,W)$. Assuming this, the action of $\sH_{\map^* \omega, \map^* \bc}(Y,W)$ on $\map^* \sH_{\omega, \bc}(X,W)$ will just be the restriction of the action of $\dd^{\map^* \omega}_Y(E) \rtimes W$ on $\map^* \dd^{\omega}_X(D) \rtimes W$. Therefore, it is given by 
$$
a \cdot (g \o p) = \gamma([a,g]) \cdot (1 \o p) + g (\gamma(a) \cdot (1 \o p)), 
$$
where $a \in \sH_{\map^* \omega, \map^* \bc}(Y,W)$, $g \in \mc{O}_Y$ and $p \in \map^{-1} \sH_{\omega, \bc}(X,W)$. Here $[a,g]$ is thought of as an element of $\sH_{\map^* \omega, \map^* \bc}(Y,W)$. If $\gamma(a)$ is contained in $\map^* \sH_{\omega, \bc}(X,W)$ and $p \in \map^{-1} \sH_{\omega, \bc}(X,W)$, then $\gamma(a) \cdot (1 \o p)$ belongs to $\map^* \sH_{\omega, \bc}(X,W)$. Thus, it suffices to show that the image of $\gamma$ is contained in $\map^* \sH_{\omega, \bc}(X,W)$ as claimed. 

Since $\sH_{\map^* \omega, \map^* \bc}(Y,W)$ is generated as an algebra by $\F^1_{\map^* \omega, \map^* \bc}(Y,W)$, it will suffice to show that the image of $\F^1_{\map^* \omega, \map^* \bc}(Y,W)$ is contained in $\map^* \F^1_{ \omega, \bc}(X,W)$. This is a local calculation. Therefore, we may assume that both $X$ and $Y$ are affine and that the subvarieties $Z$ of $X$ with $(w,Z) \in \mc{S}_{\bc}(X)$ are defined by the vanishing of functions $f_Z$. Let $p \in \mc{P}^{\map^* \omega}_Y$ and $D_p$ the associated Dunkl-Opdam operator as given by (\ref{eq:DOop}). Let $\gamma(p) = \sum_i g^i \o q^i$ in $\map^* \mc{P}^{\omega}_X$. Then, 
$$
\gamma(D_p) = \sum_i g^i \o q^i + \sum_{(w,Z')}  \frac{2 (\map^* \bc) (w,Z')}{1 - \lambda_{w,Z'}} (d \log f_{Z'} \wedge \sigma_Y(p)) \o (w - 1).
$$
If $\map^{-1}(Z) = Z_1' \cup \cdots \cup Z_l'$ set-theoretically and $n_i = \langle [Z_i'],\map^*[Z] \rangle$, then $\map^* f_Z = u \prod_i f_{Z_i'}^{n_i}$, for some unit $u$, and scheme-theoretically $\map^{-1}(Z)$ is defined by the vanishing of the function $\prod_i f_{Z_i'}^{n_i}$. Therefore, by definition of the parameter $\map^* \bc$, 
\beq{eq:pullbackform}
\frac{2 \bc (w,Z)}{1 - \lambda_{w,Z}} \map^* d \log f_{Z} = \sum_{Z' \subset \map^{-1}(Z)} \frac{2 (\map^* \bc) (w,Z')}{1 - \lambda_{w,Z'}} d \log f_{Z'} + h,
\eeq
where $h \in \mc{O}_Y \rtimes W$. Hence, up to a term in $\map^* \mc{O}_X \rtimes W$, 
\begin{align*}
& \sum_{(w,Z')}  \frac{2 (\map^* \bc) (w,Z')}{1 - \lambda_{w,Z'}} (d \log f_{Z'} \wedge \sigma_Y(p)) \o (w - 1) = & \\
& \sum_{(w,Z)}  \frac{2 \bc (w,Z)}{1 - \lambda_{w,Z}} (d \log \map^* f_{Z} \wedge \sigma_Y(p)) \o (w - 1) = & \\
& \sum_{(w,Z)}  \frac{2 \bc (w,Z)}{1 - \lambda_{w,Z}} \frac{\sigma_Y(p)(\map^* f_{Z})}{\map^* f_{Z}} \o (w - 1) = & \\ 
& \sum_{(w,Z)}  \frac{2 \bc (w,Z)}{1 - \lambda_{w,Z}} \frac{1}{\map^* f_{Z}} \left( \sum_i g^i \map^* ( \sigma_X(q^i)(f_{Z})) \right) \o (w - 1) = & \\ 
& \sum_i g^i \o \left( \sum_{(w,Z)}  \frac{2 \bc (w,Z)}{1 - \lambda_{w,Z}} \frac{\sigma_X(q^i)(f_{Z})}{f_{Z}} (w - 1) \right)  = & \\ 
& \sum_i g^i \o \left( \sum_{(w,Z)}  \frac{2 \bc (w,Z)}{1 - \lambda_{w,Z}} (d \log f_Z \wedge \sigma_X(q^i) ) (w - 1)\right). & 
\end{align*}
Thus, $\gamma(D_p) = \sum_i g^i \o D_{q^i}$, which lies in $\map^* \F^1_{ \omega, \bc}(X,W)$. 

Finally, we show that the morphism $\gamma$ induces the isomorphism $\psi$, as stated. Since $\map$ is flat, pulling back the sequence (\ref{eq:ses}) gives a short exact sequence 
$$
0 \rightarrow \mc{O}_Y \rtimes W \rightarrow \map^* \F^1_{ \omega, \bc}(X,W) \rightarrow \map^* \Theta_X \o W \rightarrow 0.
$$
Using the fact that $\mc{O}_Y \rtimes W \times_{\map^* \Theta_X \o W} \Theta_Y \o W = \mc{O}_Y \rtimes W$, where $\mc{O}_Y \rtimes W \rightarrow \map^* \Theta_X \o W$ is the zero map, and the fact that $\map^* \Theta_X \o W \times_{\map^* \Theta_X \o W} \Theta_Y \o W =\Theta_Y \o W$, we have a commutative diagram 
$$
\xymatrix{
0 \ar[r] & \mc{O}_Y \rtimes W \ar[r] \ar@{=}[d] & \F^1_{\map^* \omega, \map^* \bc}(Y,W) \ar[r]^{\sigma} \ar[d]_{\psi} & \Theta_Y \o W \ar[r] \ar@{=}[d] & 0 \\
0 \ar[r] & \mc{O}_Y \rtimes W \ar[r] & \map^* \F^1_{\omega, \bc}(X,W) \times_{\map^* \Theta_X \o W} \Theta_Y \o W \ar[r] & \Theta_Y \o W \ar[r] & 0. 
}
$$ 
By the five lemma, $\psi$ is an isomorphism. 
\end{proof}

\subsection{} The morphism $\gamma$ allows us to define an action of $\sH_{\map^* \omega,\map^* \bc}(Y,W)$ on $\map^* \mm$, for any $\sH_{\omega,\bc}(X,W)$-module $\mm$. 

\begin{cor}\label{cor:melyspullback}
Assume that $\map$ is melys with respect to $\bc$. Then pull-back is an exact functor
$$
\map^* : \LMod{\sH_{\omega,\bc}(X,W)} \ra \LMod{\sH_{\map^* \omega,\map^* \bc}(Y,W)},
$$
extending the usual pullback $\map^* : \QCoh{X} \ra \QCoh{Y}$.  
\end{cor}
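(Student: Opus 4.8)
The plan is to deduce Corollary~\ref{cor:melyspullback} from Proposition~\ref{lem:chainrule} by a fairly formal argument, paralleling the case of $\dd$-modules under flat morphisms. Recall that for a left $\dd_X^\omega \rtimes W$-module $\mm$, the usual pull-back $\map^\idot \mm := \mc{O}_Y \otimes_{\map^{-1}\mc{O}_X} \map^{-1}\mm$ (which, since $\map$ is flat, agrees with the derived pull-back and is therefore exact) carries a natural left $\dd_Y^{\map^*\omega}\rtimes W$-action because $\map^\idot \dd_X^\omega$ is a $\dd_Y^{\map^*\omega}$-module via the $\map$-morphism $\mc{P}_Y^{\map^*\omega} \to \map^*\mc{P}_X^\omega$. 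The first step is to record that this standard construction restricts: the $\sH_{\map^*\omega,\map^*\bc}(Y,W)$-module structure on $\map^\idot\sH_{\omega,\bc}(X,W)$ supplied by the morphism $\gamma$ of Proposition~\ref{lem:chainrule} makes $\map^\idot$ into a functor on $\sH$-modules. Concretely, given $\mm \in \LMod{\sH_{\omega,\bc}(X,W)}$, one equips $\map^*\mm := \mc{O}_Y \otimes_{\map^{-1}\mc{O}_X} \map^{-1}\mm$ with the action
$$
a \cdot (g \otimes m) = \gamma(a)_{(0)} g \otimes \gamma(a)_{(1)} m,
$$
in the evident Sweedler-type notation, using that $\gamma(a) \in \map^*\sH_{\omega,\bc}(X,W) \subset \map^*(\dd_X^\omega\rtimes W)$ acts on $\map^*\mm$; one must check this is well defined over $\map^{-1}\mc{O}_X$ and gives an $\sH_{\map^*\omega,\map^*\bc}(Y,W)$-module, which is exactly the compatibility built into $\gamma$ being a morphism of $\sH_{\map^*\omega,\map^*\bc}(Y,W)$-modules (the same bookkeeping as in the displayed formula $a\cdot(g\o p) = \gamma([a,g])\cdot(1\o p) + g(\gamma(a)\cdot(1\o p))$ appearing in the proof of Proposition~\ref{lem:chainrule}, now with $\map^{-1}\mm$ in place of $\map^{-1}\sH_{\omega,\bc}(X,W)$).

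The second step is quasi-coherence and exactness. By our blanket convention an $\sH_{\omega,\bc}(X,W)$-module is quasi-coherent over $\mc{O}_X$, so $\map^*\mm$ is quasi-coherent over $\mc{O}_Y$ and hence a legitimate object of $\LMod{\sH_{\map^*\omega,\map^*\bc}(Y,W)}$. Exactness is immediate: a short exact sequence in $\LMod{\sH_{\omega,\bc}(X,W)}$ is in particular a short exact sequence of quasi-coherent $\mc{O}_X$-modules, and since $\map$ is flat (this is condition~(1) in the definition of melys), $\mc{O}_Y \otimes_{\map^{-1}\mc{O}_X} \map^{-1}(-)$ is an exact functor; the $\sH_{\map^*\omega,\map^*\bc}(Y,W)$-module maps induced on the pull-backs are the same as the underlying $\mc{O}_Y$-module maps, so the resulting sequence of $\sH_{\map^*\omega,\map^*\bc}(Y,W)$-modules is exact. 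Functoriality in $\mm$ and compatibility with the usual pull-back $\QCoh{X} \to \QCoh{Y}$ (i.e.\ that forgetting the $\sH$-action to the $\mc{O}$-action commutes with $\map^*$) are built into the construction.

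The only genuine point that must be invoked, rather than checked by hand, is that the melys hypothesis is what makes all of this make sense in the first place: it is precisely conditions~(1) and~(2) that guarantee the existence of the sheaf $\F^1_{\map^*\omega,\map^*\bc}(Y,W)$, of the parameter $\map^*\bc$ on $\mc{S}(Y)$, and of the morphism $\gamma$ of Proposition~\ref{lem:chainrule} — condition~(2) being used (via Lemma~\ref{lem:irrcomp}) to ensure that the pulled-back reflection divisors $Z' \subset \map^{-1}(Z)$ really are reflection hypersurfaces of $(Y,W)$, so that formula~(\ref{eq:pullbackform}) and hence the containment $\gamma(D_p) \in \map^*\F^1_{\omega,\bc}(X,W)$ hold. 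Granting Proposition~\ref{lem:chainrule}, there is no further obstacle: the corollary is the assertion that the evident $\mc{O}$-module pull-back lifts, functorially and exactly, to $\sH$-modules, and the two ingredients for that are the morphism $\gamma$ and the flatness of $\map$, both supplied by the melys assumption.
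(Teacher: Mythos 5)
Your argument is correct and is essentially the paper's own proof: the paper likewise writes $\map^* \mm = \map^* \sH_{\omega, \bc}(X,W) \o_{\map^{-1} \sH_{\omega, \bc}(X,W)} \map^{-1} \mm$, invokes Proposition \ref{lem:chainrule} (via $\gamma$) for the $\sH_{\map^* \omega, \map^* \bc}(Y,W)$-module structure, and gets exactness from flatness of $\map$. Your version merely spells out the action formula and the well-definedness bookkeeping more explicitly.
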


\begin{proof}
Proposition \ref{lem:chainrule} implies that  
$$
\map^* \mm = \map^* \sH_{\omega, \bc}(X,W) \o_{\map^{-1} \sH_{\omega, \bc}(X,W)} \map^{-1} \mm
$$
is naturally a $\sH_{\map^* \omega, \map^* \bc}(Y,W)$-module. Since $\map$ is flat, pull-back of quasi-coherent $\mc{O}_X$-modules is an exact functor. 
\end{proof}

It is clear from the definition that $\map^*$ maps $\Lmod{\sH_{\omega,\bc}(X,W)}$ to $\Lmod{\sH_{\map^* \omega,\map^* \bc}(Y,W)}$ and liss\'e $\sH_{\omega,\bc}(X,W)$-modules to liss\'e $\sH_{\map^* \omega,\map^* \bc}(Y,W)$-modules. 

\begin{rem}
It seems plausible that the notion of pull-back along melys morphisms could be useful in studying the representation theory of rational Cherednik algebras. We hope to return to this in future work. Also, it is possible to give a more general definition of melys morphisms, which takes into account the fact that $W$ may not be acting faithfully on $X$. However, this is a bit more involved. Details will appear elsewhere. 
\end{rem} 


\subsection{Etal\'e morphisms}

In this section we consider etal\'e morphisms. Fix $X$, $W$ $\omega$ and $\bc$ as above. Let $(X,\bc)_{\mathrm{mel}}$ be the full subcategory of $\mathrm{Sch} / X$, schemes over $X$, consisting of all morphisms $Y \rightarrow X$ that are etal\'e and melys with respect to $\bc$. Then, one can easily check that $(X,\bc)_{\mathrm{mel}}$ is a site over $X$; see e.g. \cite[Section II.1]{Milne} for details on sites. We call $(X,\bc)_{\mathrm{mel}}$ the melys site over $X$. The following result is closely related to \cite[Proposition 2.3]{Wilcox}. . 

\begin{prop}\label{prop:etalesheaf}
The sheaf $\sH_{\omega, \bc}(X,W)$ is a sheaf of algebras on the melys site $(X,\bc)_{\mathrm{mel}}$. 
\end{prop}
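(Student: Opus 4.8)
The plan is to verify the sheaf axioms for $\sH_{\omega,\bc}(X,W)$ on the melys site $(X,\bc)_{\mathrm{mel}}$, reducing everything to two inputs already at our disposal: Corollary~\ref{cor:melyspullback}, which provides the restriction maps, and the fact that $\sH_{\omega,\bc}(X,W)$ is a genuine sheaf in the $W$-equivariant Zariski topology. Concretely, for an object $(\map\colon Y\to X)$ of the site, the value of the presheaf is $\Gamma(Y,\sH_{\map^*\omega,\map^*\bc}(Y,W))$; since $\map$ is etal\'e, $\map^*\omega=\omega|_Y$ and $\map^*\bc=\bc|_Y$ (because an etal\'e map pulls each reflection hypersurface back isomorphically to a disjoint union of reflection hypersurfaces, with all multiplicities $n_i=1$), so the value is just the Cherednik algebra on $Y$ for the restricted data. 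The restriction maps along a morphism $(Y'\to X)\to(Y\to X)$ in the site — which is automatically an etal\'e, hence melys, morphism $g\colon Y'\to Y$ by the usual cancellation property of etal\'e maps — are obtained from the algebra morphism underlying the pull-back of Corollary~\ref{cor:melyspullback}: indeed $g^*\sH_{\cdot}(Y,W)$ is a module over $\sH_{\cdot}(Y',W)$ and, $g$ being etal\'e, $g^*\sH_{\cdot}(Y,W)\cong \sH_{\cdot}(Y',W)$ as sheaves of algebras, giving a ring map $\Gamma(Y,\sH)\to\Gamma(Y',\sH)$. Functoriality of these maps is the chain rule, i.e. compatibility of the $\gamma$'s in Proposition~\ref{lem:chainrule} under composition, which follows from uniqueness of the induced map on the $\F^1$ level via the five-lemma argument already given.

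With the presheaf in place, the sheaf condition is checked against a covering $\{Y_i\to Y\}$ in the melys site, i.e. a jointly surjective family of etal\'e melys morphisms. First I would reduce to the case where $Y$ is affine and $W$-stable, and then further, using that $\sH$ is quasi-coherent over $\mc{O}_Y$ and that $\gr_\F\sH_{\omega,\bc}(Y,W)\cong\pi_\bu\mc{O}_{T^*Y}\rtimes W$ by Theorem~\ref{thm:PBW}, pass the sheaf statement through the order filtration $\F^\bu$. The point is that $\mc{O}_{T^*Y}\rtimes W$ is quasi-coherent, and quasi-coherent sheaves satisfy faithfully flat (in particular etal\'e) descent; since an etal\'e cover of $Y$ pulls back to an etal\'e cover of $T^*Y$, the associated graded of the Cherednik algebra satisfies the sheaf/descent condition on the melys site. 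Then a standard spectral-sequence or filtered-limit argument upgrades descent from $\gr_\F$ to $\sH$ itself: exactness of the Amitsur/\v{C}ech complex for each $\F^i/\F^{i-1}$ implies it for each $\F^i$ by induction (using that $\F^i$ is a finitely generated module and the covers are etal\'e, hence flat), and then for $\sH=\bigcup_i\F^i$ by taking the direct limit over $i$, limits being exact.

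The genuinely delicate point — and the one I expect to be the main obstacle — is the interaction between the $W$-equivariant Zariski topology on $Y$ and the (non-$W$-equivariant-looking) melys site: objects of $(X,\bc)_{\mathrm{mel}}$ are etal\'e $W$-equivariant maps, but a covering family need only be jointly surjective, and one must be careful that the descent datum glues to a $W$-equivariant object and that the skew-group structure by $W$ is respected by the descent. I would handle this by working throughout with $W$-equivariant quasi-coherent sheaves (equivalently, quasi-coherent sheaves on the quotient stack $[Y/W]$), noting that etal\'e descent holds in that equivariant category as well, and that the $\gamma$-morphisms of Proposition~\ref{lem:chainrule} are by construction $W$-equivariant. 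A secondary subtlety is confirming that a morphism between two objects of the melys site is again etal\'e and melys: etal\'eness is automatic from $\map_i=\map_j\circ g$ with $\map_i,\map_j$ etal\'e, and condition (2) of the melys definition for $g$ follows because for $(w,Z)\in\mc{S}_{\bc}(Y)$ one has $g^{-1}(Z)\subset\map_j^{-1}(\map_{j}(Z))$ and $\map_j(Z)$ lies over a $\bc$-reflection of $X$, so membership in $Y_i^w$ propagates from the melys condition for $\map_i$; I would also record that $\mc{S}_{\map^*\bc}(Y)$ for etal\'e $\map$ is exactly the set of reflections lying over $\mc{S}_{\bc}(X)$, so there are no "new" reflections to worry about. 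Once these bookkeeping points are settled, the proof is a formal consequence of quasi-coherent etal\'e descent combined with the PBW filtration, and I would cite \cite[Section II.1]{Milne} for the descent formalism and \cite[Proposition 2.3]{Wilcox} for the analogous statement for rational Cherednik algebras.
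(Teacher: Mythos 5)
Your proposal is correct and follows essentially the same route as the paper: the presheaf structure comes from the \'etale pullback isomorphism of Proposition \ref{lem:chainrule} together with Remark \ref{rem:etalepullback} and flat base change, morphisms in the site are automatically \'etale and melys, and the sheaf axiom is reduced, after passing to affines, to the associated graded via the PBW filtration of Theorem \ref{thm:PBW}, where it becomes quasi-coherent \'etale descent. The only cosmetic difference is how exactness is transferred back from $\gr_\F$ to $\sH$ itself: you run an induction on filtration degree followed by a direct limit, whereas the paper invokes strictness of the order filtration and compares with the known exact sequence for $\dd_X \rtimes W$; the two arguments are interchangeable.
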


\begin{proof}
Let $\map : Y \rightarrow X$ be an etal\'e map, melys with respect to $\bc$. We begin by showing that $\map^* \sH_{\omega,\bc}(X,W)$ is a sheaf of algebras and the morphism $\gamma$ of Proposition \ref{lem:chainrule} is an isomorphism of algebras. 

As in section \ref{sec:melysdefn}, let $D = \bigcup Z$, $E = \map^{-1}(D)$, $U = X - D$ and $V = Y - E$. Since $\Phi : V \rightarrow U$ is etal\'e, it is flat and hence $\Phi^{-1} \dd_U^{\omega} \rtimes W$ is a subsheaf of $\Phi^* \dd_U^{\omega} \rtimes W$. As noted in remark \ref{rem:etalepullback}, the natural map $\gamma : \dd^{\Phi^* \omega}_V \rtimes W \rightarrow \Phi^* \dd_U^{\omega} \rtimes W$ is an algebra isomorphism such that the restriction of $\gamma^{-1}$ to $\Phi^{-1} \dd_U^{\omega} \rtimes W$ is an algebra morphism $\Phi^{-1} \dd_U^{\omega} \rtimes W \rightarrow \dd^{\Phi^* \omega}_V \rtimes W$. Therefore, using flat base change as in the proof of Lemma \ref{lem:twistedgammaW}, we get an algebra morphism $\gamma^{-1} : \map^{-1} \dd^{\omega}_X(D) \rtimes W \rightarrow \dd^{\map^* \omega}_Y (E) \rtimes W$. This morphism induces an algebra isomorphism 
$$
\gamma^{-1} : \map^{*} \dd^{\omega}_X(D) \rtimes W \stackrel{\sim}{\longrightarrow} \dd^{\map^* \omega}_Y (E) \rtimes W,
$$
where the multiplication in $\map^{*} \dd^{\omega}_X(D) \rtimes W$ is given by 
$$
(g_1 \o q_1)  \cdot (g_2 \o q_2) = (g_1 \o 1)  u(q_1,g_2) (1 \o q_2)
$$
with $u(q,g) := \gamma([\gamma^{-1}(q),g]) \in \Phi^* \dd^{\omega}_X(D) \rtimes W$, for all $g, g_1,g_2 \in \mc{O}_Y$ and $q, q_1, q_2 \in \map^{-1} \dd^{\omega}_X(D) \rtimes W$. By Proposition \ref{lem:chainrule}, $\gamma^{-1}$ restricts to an algebra morphism $\map^{-1} \sH_{\omega,\bc}(X,W) \rightarrow \sH_{\map^* \omega, \map^* \bc}(Y,W)$, inducing an isomorphism $\map^*  \sH_{\omega,\bc}(X,W) \iso \sH_{\map^* \omega, \map^* \bc}(Y,W)$. Let 
$$
\xymatrix{
Y_1 \ar[rr]^{\vartheta} \ar[rd]_{\map_1} & & Y_2 \ar[dl]^{\map_2} \\
 & X & 
}
$$
be a morphism in $(X,\bc)_{\mathrm{mel}}$. Then, $Y_1$ and $Y_2$ are smooth varieties and, by \cite[I, Corollary 3.6]{Milne}, $\vartheta$ is an etal\'e morphism. Lemma \ref{lem:irrcomp} implies that it is also melys. Thus, the above computations shows that $\sH_{\omega,\bc}(X,W)$ forms a presheaf on $(X,\bc)_{\mathrm{mel}}$. 

To check that it is in fact a sheaf, it suffices to do so locally; see the proof of \cite[Proposition 0]{BorhoBryII}. Therefore, we assume that $X$ is affine and that we are given a smooth etal\'e, $W$-equivariant, affine covering $(i_{\alpha} : Y_{\alpha} \ra X)$ of $X$ i.e. each $Y_{\alpha}$ is affine and the union of the images of the maps $i_\alpha$ cover $X$. Then we must prove that the sequence
$$
0 \ra \H_{\omega,\bc}(X,W) \ra \bigoplus_{\alpha} \H_{i_{\alpha}^* \omega, i^*_{\alpha} \bc}(Y_{\alpha},W) \ra \bigoplus_{\alpha,\beta} \H_{i^*_{\alpha,\beta} \omega, i^*_{\alpha,\beta} \bc}(Y_{\alpha} \times_X Y_{\beta},W)
$$
is exact. Let $U, V_{\alpha}, \ds$ be the usual open subsets of $X, Y_{\alpha}, \ds$.  Then, we have a commutative diagram
$$
\xymatrix{
0 \ar[r] & \H_{\omega,\bc}(X,W) \ar[r]^{j} \ar@{^{(}->}[d] & \bigoplus_{\alpha} \H_{i_{\alpha}^* \omega, i^*_{\alpha} \bc}(Y_{\alpha},W) \ar[r]^k \ar@{^{(}->}[d] & \bigoplus_{\alpha,\beta} \H_{i^*_{\alpha,\beta} \omega, i^*_{\alpha,\beta} \bc}(Y_{\alpha} \times_X Y_{\beta},W) \ar@{^{(}->}[d] \\
0 \ar[r] & \Gamma(U,\dd_U^{\omega} \rtimes W) \ar[r] & \bigoplus_{\alpha} \Gamma(V_{\alpha},\dd^{i_{\alpha}^* \omega}_{V_{\alpha}} \rtimes W) \ar[r] & \bigoplus_{\alpha,\beta} \Gamma(V_{\alpha} \times_U V_{\beta}, \dd^{i^*_{\alpha,\beta} \omega} \rtimes W). 
}
$$
The bottom row is exact because $\dd_U^{\omega} \rtimes W$ is a sheaf on the melys site. Since the diagram commutes, $j$ is injective and the image of $j$ is contained in the kernel of $k$. Therefore, we just need to show that the image of $j$ is exactly the kernel of $k$. The sequence on the bottom row is strictly filtered with respect to the order filtration and, as noted in section \ref{sec:PBW}, the Cherednik algebra inherits its natural filtration by restriction of the order filtration on $\dd_U^{\omega} \rtimes W$. Therefore, the top row will be exact if and only if the corresponding sequence of associated graded objects is exact. But this sequence is also the associated graded of the analogous sequence for $\dd_X \rtimes W$, which we know is exact. 
\end{proof}

\subsection{The $\KZ$-functor} Assume that $W$ acts freely on $V$ and $W$, and let $\omega = 0$. The proof of Proposition \ref{lem:chainrule} makes it clear that pull-back of $\sH_{\bc}(X,W)$-modules is compatible with the $\KZ$-functor. Denote by $\sH_{\bc}(X,W)\text{-}{\mathsf{Reg}}$ the full subcategory of $\Lmod{\sH_{\bc}(X,W)}$ consisting of all liss\'e $\sH_{\bc}(X,W)$-modules whose restriction to $U$ is an integrable connection with regular singularities. Let $\mathsf{DR}$ be the de-Rham functor that maps integrable connections with regular singularities on $U / W$ to representations of the fundamental group $\pi_1 (U / W)$. The $\KZ$-functor is defined by 
$$
\KZ_X (\mm) = \mathsf{DR} \left( [ \rho_{\idot} (\mm |_{U}) ]^W \right).
$$
Then $\map^*$ maps $\sH_{\bc}(X,W)\text{-}{\mathsf{Reg}}$ into $\sH_{\map^* \bc}(Y,W)\text{-}{\mathsf{Reg}}$. Therefore, since the de-Rham functor behaves well with respect to pull-back \cite[Theorem 7.1.1]{HTT}, the following diagram commutes
$$
\xymatrix{
\sH_{\bc}(X,W)\text{-}{\mathsf{Reg}} \ar[r]^{\map^*} \ar[d]_{\KZ_X} & \sH_{\map^* \bc}(Y,W)\text{-}{\mathsf{Reg}} \ar[d]^{\KZ_Y} \\
\Lmod{\pi_1(U/W)} \ar[r]^{\Phi^*} & \Lmod{\pi_1(V/W)}.
}
$$
The image of the $\KZ$-functor is contained in the full subcategory of $\Lmod{\pi_1(U/W)}$ consisting of all modules for a certain "Hecke" quotient of $\C \pi_1(U/W)$; see \cite[Proposition 3.4]{ChereSheaf}. 

\subsection{Push-forward}\label{sec:pushforward} It is also possible to define (derived) push-forward of modules under melys maps. Let $\map : Y \rightarrow X$ be melys with respect to $\bc$ and denote by $\RMod{\sH_{\omega,\bc}(Y,W)}$ the category of right $\sH_{\omega,\bc}(Y,W)$-modules. Then, the derived push-forward functor 
$$
\mathbb{R} \map_* :  D^b(\RMod{\sH_{\omega,\bc}(Y,W)}) \longrightarrow D^b(\RMod{\sH_{\omega,\bc}(X,W)})
$$
is given by 
$$
\mathbb{R} \map_*(\mm) = \mathbb{R} \map_{\idot} \left( \mm \o^{\mathbb{L}}_{\sH_{\omega,\bc}(Y,W)} \map^* \sH_{\omega,\bc}(X,W) \right). 
$$
Let us justify the fact that the image of $\mathbb{R} \map_*$ is contained in $D^b(\RMod{\sH_{\omega,\bc}(X,W)})$. Firstly, as noted in section \ref{sec:PBW}, the PBW Theorem implies that the sheaf $\sH_{\omega,\bc}(Y,W)$ has good homological properties. Since we have assumed that $Y$ is quasi-projective, this implies that each $\mm \in \RMod{\sH_{\omega,\bc}(Y,W)}$ has a finite resolution by locally projective $\sH_{\omega,\bc}(Y,W)$-modules, see \cite[section 1.4]{HTT}. Hence, for $\mm \in  D^b(\RMod{\sH_{\omega,\bc}(Y,W)})$, the complex $\mm \o^{\mathbb{L}}_{\sH_{\omega,\bc}(Y,W)} \map^* \sH_{\omega,\bc}(X,W)$ belongs to $D^b(\RMod{\map^{-1} \sH_{\omega,\bc}(Y,W)})$. Then the fact that $\mathbb{R} \map_*(\mm)$ belongs to $D^b(\RMod{\sH_{\omega,\bc}(X,W)})$ follows, for instance, from \cite[Proposition 1.5.4]{HTT}. 

We will also require push-forward of left $\sH_{\omega,\bc}(Y,W)$-modules under open embeddings $j : Y \hookrightarrow X$. The following is standard, see e.g. \cite[Proposition 1.5.4]{HTT}. 

\begin{lem}\label{lem:opendirect}
For $\mm \in \LMod{\sH_{\omega,\bc}(Y,W)}$, the sheaves $\mathbb{R}^i j_{\idot} (\mm)$ belong to $\LMod{\sH_{\omega,\bc}(X,W)}$ for all $i \ge 0$. 
\end{lem}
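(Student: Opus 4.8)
The plan is to reduce the statement to a known fact about (twisted) differential operators, using the same strategy that produced the short exact sequence \eqref{eq:ses} and the filtration arguments in Section~\ref{sec:PBW}. The claim is local on $X$, so we may replace $X$ by an affine $W$-stable open subset; then $j : Y \hookrightarrow X$ is the inclusion of a $W$-stable open subvariety. First I would observe that, for the \emph{open} embedding $j$, the relevant melys data is trivial in the sense that $\sH_{\omega,\bc}(Y,W)$ is simply the restriction $\sH_{\omega,\bc}(X,W)|_Y$, because the reflections of $(Y,W)$ are exactly the restrictions to $Y$ of the reflections of $(X,W)$ meeting $Y$, and the Dunkl--Opdam operators restrict correctly (this is implicit in Proposition~\ref{lem:chainrule}, where for an open embedding $\map^*\Theta_X\o W\times\cdots$ collapses and $\gamma$ is an isomorphism). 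Thus a left $\sH_{\omega,\bc}(Y,W)$-module is just a left $\sH_{\omega,\bc}(X,W)|_Y$-module, quasi-coherent over $\oo_Y$.

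Next I would address quasi-coherence of $\mathbb{R}^i j_{\idot}\mm$ over $\oo_X$: this is the standard fact (\cite[Proposition~1.4.15, Proposition~1.5.4]{HTT}, or directly Grothendieck's theorem on higher direct images for quasi-compact quasi-separated morphisms) that $\mathbb{R}^i j_{\idot}$ preserves quasi-coherence, applied to the underlying $\oo_Y$-module of $\mm$; the $W$-equivariant Zariski topology causes no trouble since $j$ is $W$-equivariant and everything in sight carries a compatible $W$-action. The only remaining point is the module structure: I need $\mathbb{R}^i j_{\idot}\mm$ to be a sheaf of $\sH_{\omega,\bc}(X,W)$-modules. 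For this I would use that $\sH_{\omega,\bc}(X,W)$ is quasi-coherent over $\oo_X$ on both sides (a consequence of the PBW Theorem~\ref{thm:PBW}), and that $j^{-1}\sH_{\omega,\bc}(X,W)=\sH_{\omega,\bc}(X,W)|_Y=\sH_{\omega,\bc}(Y,W)$; then the projection-formula style identity $\mathbb{R}^i j_{\idot}(\mm)\cong\mathbb{R}^i j_{\idot}(\mm)\o_{\oo_X}\oo_X$ together with the $\oo_X$-bilinearity of the $\sH$-action lets one transport the action across $\mathbb{R} j_{\idot}$. Concretely, one filters $\sH_{\omega,\bc}(X,W)$ by the order filtration $\F^{\idot}$, whose graded pieces are coherent (indeed locally free of finite rank is false, but quasi-coherent) $\oo_X$-modules by Theorem~\ref{thm:PBW}, and checks that the action map $\sH_{\omega,\bc}(X,W)\o_{\oo_X}\mathbb{R}^i j_{\idot}\mm\to\mathbb{R}^i j_{\idot}\mm$ is induced from the action on $\mm$ by applying $\mathbb{R} j_{\idot}$ and using the base-change/projection compatibility degreewise in the filtration.

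I expect the main obstacle to be purely bookkeeping: verifying that the $\sH_{\omega,\bc}(X,W)$-action constructed on $\mathbb{R}^i j_{\idot}\mm$ is associative and compatible with the $\oo_X$-module structure, i.e. that one really does get a genuine module and not just a compatible family of maps. This is exactly the content of \cite[Proposition~1.5.4]{HTT} in the $\dd$-module setting, and the argument there transfers verbatim once one knows (i) $\sH_{\omega,\bc}(X,W)$ is quasi-coherent over $\oo_X$, (ii) $j$ is an affine morphism when $X$ is affine (so in fact $\mathbb{R}^i j_{\idot}\mm=0$ for $i>0$ in that case and $j_{\idot}\mm$ is the only term to worry about), and (iii) the restriction $\sH_{\omega,\bc}(X,W)|_Y$ recovers $\sH_{\omega,\bc}(Y,W)$. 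Since the statement is local and $j$ is affine on affines, in fact only $i=0$ occurs and the proof collapses to: $j_{\idot}\mm$ is quasi-coherent over $\oo_X$ by \cite[Proposition~1.4.15]{HTT} or \cite[II.5]{Hartshorne}, and it carries an $\sH_{\omega,\bc}(X,W)$-action because $\sH_{\omega,\bc}(X,W)=j_{\idot}(\sH_{\omega,\bc}(X,W)|_Y)=j_{\idot}\sH_{\omega,\bc}(Y,W)$ acts on $j_{\idot}\mm$ by functoriality of $j_{\idot}$ applied to the action $\sH_{\omega,\bc}(Y,W)\o_{\oo_Y}\mm\to\mm$. I would then simply cite \cite[Proposition~1.5.4]{HTT} for the general (non-affine) case, noting that the proof goes through mutatis mutandis.
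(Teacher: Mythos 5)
Your overall strategy (identify $\sH_{\omega,\bc}(Y,W)$ with $j^{-1}\sH_{\omega,\bc}(X,W)$, get quasi-coherence of the higher direct images from the standard theorem for quasi-compact separated morphisms, and transport the module structure through the derived functors of $j_{\idot}$, finally citing \cite[Proposition 1.5.4]{HTT}) is the same route the paper takes, since the paper offers no argument beyond that citation. But your sketch contains a genuine error: you claim that after localizing to $X$ affine the inclusion $j:Y\hookrightarrow X$ is an affine morphism, so that $\mathbb{R}^i j_{\idot}\mm=0$ for $i>0$ and only $i=0$ needs treatment. This is false: an open subset of an affine variety need not be affine, and $j$ is affine only when it is (e.g.\ the complement of a divisor, as in section \ref{sec:melysdefn}). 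The case that actually matters in this paper is $j:\Vo\hookrightarrow V$ with $\dim V\ge 2$, where $\Vo$ is not affine and $\mathbb{R}^{\dim V-1}j_{\idot}\oo_{\Vo}\neq 0$; indeed the proof of Theorem \ref{thm:affine} relies on the nonvanishing sheaves $\mathbb{R}^i j_{\idot}(\pi^*\mm_k)$, $i>0$, being $\H_{\omega,\bc}(V,W)$-modules supported at the origin. So the reduction on which your "collapse to $i=0$" rests would make the lemma vacuous precisely where it is used.

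The fix is not hard and does not need the projection-formula identity $\mathbb{R}^i j_{\idot}(\mm)\cong\mathbb{R}^i j_{\idot}(\mm)\o_{\oo_X}\oo_X$ (which is a tautology) nor the order filtration: for any sheaf of rings $\mc{A}$ on $X$ and any $j^{-1}\mc{A}$-module $\mm$, the functor $j_{\idot}$ sends $j^{-1}\mc{A}$-modules to $\mc{A}$-modules and, being right adjoint to the exact functor $j^{-1}$, preserves injectives; since injective (or flasque) $j^{-1}\mc{A}$-modules compute the sheaf-theoretic higher direct images, each $\mathbb{R}^i j_{\idot}\mm$ inherits a natural $\mc{A}=\sH_{\omega,\bc}(X,W)$-module structure compatible with the $\oo_X$-structure. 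Quasi-coherence over $\oo_X$ is then the separate, purely $\oo$-module statement you already invoke (Grothendieck for quasi-compact separated $j$, which does hold for arbitrary open immersions of varieties). With the erroneous affineness claim removed and this argument (or simply the citation, as in the paper) in its place, your proof is fine.
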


It would be interesting to develop a notion of duality for Cherednik algebras, which would allow one to define push-forward of left $\sH_{\omega,\bc}(Y,W)$-modules along arbitrary melys morphisms. 

\section{Twisted equivariant modules}

In this section we define (twisted) $G$-equivariant $\sH_{\omega,\bc}(X,W)$-modules. 

\subsection{} Let $X$ be a smooth $W$-variety and $\sH_{\omega,\bc}(X,W)$ a sheaf of Cherednik algebras on $X$. Assume that a \textit{connected} algebraic group $G$ also acts on $X$ such that this action commutes with the action of $W$. Write $p,a : G \times X \ra X$ for the projection and action maps. Let $\ms{M}$ be a $\sH_{\omega,\bc}(X,W)$-modue. Clearly, $p^* \ms{M}$ is a $\sH_{\omega,\bc}(G \times X, W) = \dd_G \boxtimes \sH_{\omega,\bc}(X,W)$-module. 

\begin{lem}
The action map $a$ is melys for any $\bc$ and hence $a^* \ms{M}$ is a $\sH_{\omega,\bc}(G \times X, W)$-module. 
\end{lem}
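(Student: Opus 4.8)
The plan is to verify that $a$ is $W$-equivariant, flat, and satisfies condition~(2) in the definition of melys for \emph{every} reflection of $(X,W)$ — in particular for every $(w,Z)\in\mc{S}_{\bc}(X)$, no matter what $\bc$ is. Throughout, $W$ is understood to act on $G\times X$ through the second factor, so that both $p$ and $a$ are $W$-equivariant. The $W$-equivariance of $a$ is immediate from the standing hypothesis that the $G$- and $W$-actions commute: $a(g,wx)=g(wx)=w(gx)=w\cdot a(g,x)$.

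For flatness I would factor $a$ as $p\circ\alpha$, where $\alpha\colon G\times X\to G\times X$, $(g,x)\mapsto(g,gx)$, is an automorphism (with inverse $(g,y)\mapsto(g,g^{-1}y)$), and $p\colon G\times X\to X$ is the second projection. Since $p$ is obtained from the structure morphism $G\to\Spec\C$ by base change along $X\to\Spec\C$, it is faithfully flat — indeed smooth of relative dimension $\dim G$, as $G$ is a smooth variety — and hence so is $a$. For condition~(2), let $(w,Z)\in\mc{S}(X)$, so that $Z\subset X^w$, and take $(g,x)\in a^{-1}(Z)$, i.e.\ $gx\in Z\subset X^w$. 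Then $w(gx)=gx$, and since $w$ commutes with the action of $g$ this reads $g(wx)=gx$; applying $g^{-1}$ gives $wx=x$. Thus $x\in X^w$ and $(g,x)\in G\times X^w=(G\times X)^w$. Hence $a^{-1}(Z)\subset(G\times X)^w$ set-theoretically for every reflection $(w,Z)$, so $a$ is melys with respect to any $\bc$.

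To deduce the last assertion from Corollary~\ref{cor:melyspullback} I would identify its target $\sH_{a^*\omega,a^*\bc}(G\times X,W)$ with $\sH_{\omega,\bc}(G\times X,W)=\dd_G\boxtimes\sH_{\omega,\bc}(X,W)$. Since $G$ is connected it preserves each connected component of $X^w$, so every such $Z$ is $G$-stable, and the codimension-one components of $(G\times X)^w=G\times X^w$ are exactly the $G\times Z$ with $(w,Z)\in\mc{S}(X)$; this gives a bijection $\mc{S}(G\times X)\cong\mc{S}(X)$. Using $a=p\circ\alpha$ together with the $G$-stability of $Z$, one sees that scheme-theoretically $a^{-1}(Z)=\alpha^{-1}(G\times Z)\cong G\times Z$, which is reduced and irreducible; hence $a^*[Z]=[G\times Z]$ with multiplicity one and $(a^*\bc)(w,G\times Z)=\bc(w,Z)$ under the above bijection. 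Finally $a^*\omega=\omega$, which is the same fact that makes $\dd^\omega_X$ a $G$-equivariant sheaf of algebras. With these identifications, Corollary~\ref{cor:melyspullback} gives that $a^*\ms{M}$ is a $\sH_{\omega,\bc}(G\times X,W)$-module.

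The argument is essentially formal: the melys conditions follow directly from the commuting actions and from the factorisation of $a$ through the projection $p$. The only point that requires genuine care is the parameter identification in the last step — in particular checking that the scheme-theoretic preimage $a^{-1}(Z)$ is reduced, so that the cycle $a^*[Z]$ occurs with multiplicity one, which is exactly where connectedness of $G$ (hence $G$-stability of $Z$) enters.
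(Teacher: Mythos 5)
Your proof is correct, and the core of it is the same as the paper's: check flatness and the fixed-point condition. The details differ slightly in an interesting way. For condition (2) the paper uses connectedness of $G$ and of $Z$ to conclude that $Z$ is $G$-stable, whence $a^{-1}(Z)=G\times Z\subset G\times X^w$; you instead deduce the containment $a^{-1}(Z)\subset (G\times X)^w$ directly from the commutation $w(gx)=g(wx)$, which needs no connectedness at all and is arguably cleaner (it shows $a^{-1}(X^w)\subset G\times X^w$ for every $w$). Connectedness enters your argument only in the final step, where you verify the identifications $(a^*\bc)(w,G\times Z)=\bc(w,Z)$ (via $a^{-1}(Z)\cong G\times Z$ reduced and irreducible, so the cycle $a^*[Z]$ has multiplicity one) and $a^*\omega=p^*\omega$, so that the target of Corollary \ref{cor:melyspullback} really is $\sH_{\omega,\bc}(G\times X,W)=\dd_G\boxtimes\sH_{\omega,\bc}(X,W)$. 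The paper leaves this identification implicit, so making it explicit is a genuine improvement; the one place where you assert rather than argue is $a^*\omega=p^*\omega$ (the $G$-equivariance of the twist), but since the paper also takes this for granted in defining $(G,\chi)$-equivariant modules, it is not a gap relative to the paper's own treatment.
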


\begin{proof}
The action map $a$ is smooth and hence flat. Let $(w,Z) \in \mc{S}(X)$. Since the action of $G$ commutes with the action of $W$, $X^w$ is $G$-stable. Moreover, the fact that $G$ and $Z$ are connected implies that $Z$ itself is $G$-stable. Thus, $a^{-1}(Z) = G \times Z$ is contained in $(G \times X)^w = G \times X^w$. 
\end{proof}

\subsection{} The Lie algebra of $G$ is denoted $\g$. Let $m : G \times G \to G$ the multiplication map and $s : X \to G \times X$, $s(x) = (e,x)$. Choose $\chi \in (\g / [\g,\g])^*$ and let $\mc{O}_G^{\chi}$ be the $\dd_G$-module $\dd_G / \dd_G \{ v - \chi(v) \ | \ v \in \g \}$, where we have identified $\g$ with right invariant vector fields on $G$. It is a simple integrable connection on $G$. 

\begin{defn}
The module $\mm \in \LMod{\sH_{\omega,\bc}(X, W)}$ is said to be $(G,\chi)$-equivariant if there exists an isomorphism $\theta : \mc{O}^{\chi}_G \boxtimes \mm \iso a^* \mm$ of $\sH_{\omega,\bc}(G \times X, W)$-modules such that $s^* \theta = \id_{\mm}$ and the diagram 
\beq{eq:cocycle}
\xymatrix{
\mc{O}^{\chi}_G \boxtimes \mc{O}^{\chi}_G \boxtimes \mm \ar[rr]^{\id_{G} \times \theta} \ar[d]_{=} & & \mc{O}^{\chi}_G \boxtimes a^* \mm \ar[d]^{=} \\
(m \times \id)^* (\mc{O}_G^{\chi} \boxtimes \mm) \ar[dd]_{(m \times \id_X)^* \theta} & & (\id_G \times a)^*(\mc{O}_G^{\chi} \boxtimes \mm) \ar[ddl]^{(\id_G \times a)^* \theta} \\
 & & \\
(m \times \id_X)^* a^* \mm \ar[r]^= & (\id_G \times a)^*a^* \mm  & 
}
\eeq
is commutative ("$\mm$ satisfies the \textit{cocycle} condition"). 
\end{defn}

The category of $(G,\chi)$-equivariant $\sH_{\omega,\bc}(X, W)$-modules is denoted $\LMod{(\sH_{\omega,\bc}(X, W),G,\chi)}$. 

\subsection{$T$-monodromic modules}

Let $T$ be a torus i.e. a product of copies of the multiplicative group $\Cs$. The Lie algebra of $T$ is denoted $\mf{t}$. Let $\pi : Y \rightarrow X$ be a principal $T$-bundle, with $X$ smooth. We assume that the finite group $W$ acts on $Y$, the action commuting with the action of $T$. This implies that $W$ also acts on $X$ and the map $\pi$ is $T$-equivariant. Let $\sH_{\bc}(Y,W)$ be a sheaf of Cherednik\footnote{We assume, for simplicity, that the twist $\omega$ is zero. Presumably one can also deal with non-trivial twists.} algebras on $Y$. 

\begin{lem}
There is a morphism of Lie algebras $\mu_{\bc} : \mf{t} \rightarrow \F^1_{\bc}(Y,W)$ such that the composite $\sigma \circ \mu_{\bc}$ equals the usual moment map $\mu : \mf{t} \rightarrow \Theta_Y \o W$. 
\end{lem}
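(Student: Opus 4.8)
The plan is to show that one can take $\mu_{\bc}$ to be the infinitesimal $T$-action itself. Recall that $\mu$ sends $v\in\mf t$ to the fundamental vector field $\mu(v)\in\Gamma(Y,\Theta_Y)$ generating the one-parameter subgroup $\exp(sv)\subset T$; since the $T$-action commutes with the $W$-action, $\mu(v)$ is $W$-invariant, and it is regarded as a section of $\Theta_Y\o W$ via $v\mapsto\mu(v)\o 1$. I will argue that each $\mu(v)$ already lies in the subsheaf $\F^1_{\bc}(Y,W)$ of $j_{\idot}(\dd_{Y-D}\rtimes W)$, where $D=\bigcup_{(w,Z)\in\mc S(Y)}Z$, and then set $\mu_{\bc}(v):=\mu(v)$. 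Granting this membership, the two asserted properties are immediate: the anchor $\sigma$ of the sequence (\ref{eq:ses}) restricts to the identity on vector fields, so $\sigma(\mu_{\bc}(v))=\mu(v)$ and hence $\sigma\circ\mu_{\bc}=\mu$; and since $\F^1_{\bc}(Y,W)\subset\sH_{\bc}(Y,W)$, the commutator of $\mu_{\bc}(v)$ and $\mu_{\bc}(v')$ may be computed among ordinary vector fields inside $j_{\idot}(\dd_{Y-D}\rtimes W)$, and as $T$ is a torus these vector fields pairwise commute, so $[\mu_{\bc}(v),\mu_{\bc}(v')]=0=\mu_{\bc}([v,v'])$. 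Hence $\mu_{\bc}$, composed with the inclusion $\F^1_{\bc}(Y,W)\hookrightarrow\sH_{\bc}(Y,W)$, is a homomorphism of Lie algebras.

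The real content is therefore the membership $\mu(v)\in\Gamma(Y,\F^1_{\bc}(Y,W))$, and it rests on a single geometric observation: for every $(w,Z)\in\mc S(Y)$, the hypersurface $Z$ is stable under $T$. Indeed, because the $T$-action commutes with $W$, the fixed locus $Y^w$ is $T$-stable, and since $T$ is connected it must preserve each connected component of $Y^w$, of which $Z$ is one. Consequently the flow of every $\mu(v)$ preserves $Z$, so $\mu(v)$ is tangent to $Z$ and hence preserves its ideal sheaf $\mc{I}_Z$. On each member $U_i$ of the chosen affine cover of $Y$, over which $\mc{I}_Z$ is generated by $f_{Z,i}$, this says $\mu(v)(f_{Z,i})\in(f_{Z,i})$; equivalently the contraction $d\log f_{Z,i}\wedge\sigma(\mu(v))=\mu(v)(f_{Z,i})/f_{Z,i}$ is a regular function on $U_i$.

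To conclude I would compare $\mu(v)$ with its Dunkl--Opdam operator $D_{\mu(v)}$. By definition $D_{\mu(v)}\in\Gamma(U_i,\F^1_{\bc}(Y,W))$, and formula (\ref{eq:DOop}) gives
$$
D_{\mu(v)}-\mu(v)=\sum_{(w,Z)\in\mc S(Y)}\frac{2\bc(w,Z)}{1-\lambda_{w,Z}}\bigl(d\log f_{Z,i}\wedge\sigma(\mu(v))\bigr)(w-1).
$$
This sum is finite, since $W$ is finite and each $Y^w$ has finitely many components, and by the previous paragraph each summand lies in $\Gamma(U_i,\mc{O}_Y\rtimes W)\subseteq\Gamma(U_i,\F^1_{\bc}(Y,W))$. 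Hence $\mu(v)\in\Gamma(U_i,\F^1_{\bc}(Y,W))$ for every $i$. As $\mu(v)$ is a globally defined vector field and $\F^1_{\bc}(Y,W)$ is a subsheaf of $j_{\idot}(\dd_{Y-D}\rtimes W)$, the sheaf axioms then force $\mu(v)\in\Gamma(Y,\F^1_{\bc}(Y,W))$, which finishes the proof.

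The one genuinely non-formal ingredient is the $T$-stability of the codimension-one fixed components: this is precisely what tames the Dunkl correction terms, pushing them into $\mc{O}_Y\rtimes W$ so that the bare moment vector fields themselves lift to $\F^1_{\bc}(Y,W)$. Everything else is routine bookkeeping, and the Lie-algebra assertion is immediate from $\mf t$ being abelian. I would only flag that it is $\mu(v)$, not $D_{\mu(v)}$, that is the correct global lift: changing the local generators $f_{Z,i}$ alters $D_{\mu(v)}$ by terms of the form $\mu(v)(\log u_{ij})(w-1)$, so the operators $D_{\mu(v)}$ need not patch together, whereas the vector fields $\mu(v)$ obviously do.
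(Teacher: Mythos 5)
Your proof is correct, and it takes a genuinely different route for the one substantive point, namely that the moment vector fields $\mu(v)$ lie in $\F^1_{\bc}(Y,W)$. The paper also takes $\mu_{\bc}(v)$ to be the vector field obtained by differentiating the $T$-action, but verifies membership in $\F^1_{\bc}(Y,W)$ by a reduction: the statement is local on $X$, so one may assume the bundle is trivial, $Y = X \times T$, in which case $\sH_{\bc}(Y,W) = \sH_{\bc}(X,W) \boxtimes \dd_T$ and the claim is immediate since $\mf{t}$ maps into the $\dd_T$ factor. You instead argue directly on $Y$: every reflection hypersurface $Z \subset Y^w$ is $T$-stable (connectedness of $T$ plus commutation with $W$ --- the same observation the paper uses earlier to show the action map of a connected group is melys), hence $\mu(v)$ is tangent to $Z$, the contractions $d\log f_{Z,i} \wedge \mu(v) = \mu(v)(f_{Z,i})/f_{Z,i}$ are regular, and therefore $D_{\mu(v)} - \mu(v) \in \mc{O}_Y \rtimes W$, which puts $\mu(v)$ in $\F^1_{\bc}(Y,W)$ locally and then globally by the sheaf property. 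Your argument buys two things: it never uses the principal-bundle structure or Zariski-local triviality, only that a connected group acts commuting with $W$, so it is slightly more general and makes explicit exactly why the Dunkl correction terms are harmless; and you also spell out the Lie-algebra assertion (commutativity of the fundamental vector fields of the torus, with the bracket computed in the ambient algebra $j_{\idot}(\dd_{Y-E} \rtimes W)$ --- the right reading, since $\F^1_{\bc}$ itself carries no natural bracket) as well as the correct global lift being $\mu(v)$ rather than the cover-dependent $D_{\mu(v)}$, points the paper leaves implicit. The paper's reduction is shorter once local triviality of $T$-bundles is granted, but your version is a fully valid, and somewhat more self-contained, substitute.
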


\begin{proof}
Since the action of $T$ commutes with the action of $W$, the open set $V = Y - E$ is $T$-stable. Differentiating the action of $T$ on $U$, there is a map $\mu' : \mf{t} \rightarrow \dd_Y(E) \rtimes W$. It is clear that $\sigma \circ \mu' = \mu$. Therefore, we just need to show that the image of $\mu'$ is contained in the subsheaf $\F^1_{\bc}(Y,W)$. This is a local computation. Hence we may assume that $Y = X \times T$, in which case $\sH_{\bc}(Y,W) = \sH_{\bc}(X,W) \boxtimes \dd_T$. Now the claim is clear. 
\end{proof}

The group $T$ acts on $\sH_{\bc}(Y,W)$ and the map $\mu_{\bc}$ is $T$-equivariant. Moreover, a local computation (using the fact that the bundle $Y \rightarrow X$ is locally trivial) shows that the image of $\mf{t}$ is central in $(\pi_{\idot} \sH_{\bc}(Y,W))^T$, hence we may perform quantum Hamiltonian reduction. Recall that we have defined the map $\beta : \mf{t}^* \rightarrow \HH^2(X,\Omega^{1,2}_X)$ in (\ref{eq:betadef}). 

\begin{prop}\label{prop:principalWiso}
Let $\chi \in \mf{t}^*$. We have an isomorphism of sheaves of algebras on $X$,
$$
\sH_{\beta(\chi),\bc}(X,W) \simeq (\pi_{\idot} \sH_{\bc}(Y,W))^T / \langle \{\mu_{\bc}(t) - \chi(t) \ | \ t \in \mf{t} \} \rangle. 
$$
\end{prop}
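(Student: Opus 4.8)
The plan is to verify the claimed isomorphism by working locally on $X$, where the principal $T$-bundle trivializes, and then checking that the local isomorphisms are canonical enough to glue. First I would observe that the statement is local on $X$ in the $W$-equivariant Zariski topology, since all the objects involved ($\sH_{\bc}(Y,W)$, the invariants under $T$, the two-sided ideal generated by $\mu_{\bc}(t) - \chi(t)$, and the twisted sheaf $\sH_{\beta(\chi),\bc}(X,W)$) are defined by local constructions that commute with restriction to $W$-stable opens. So I may assume $X$ is affine with $\mathrm{Pic}(X) = 0$ and $Y = X \times T$, in which case $\sH_{\bc}(Y,W) = \sH_{\bc}(X,W) \boxtimes \dd_T$ by the argument already used in the proof of the preceding lemma.

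Over such a trivializing chart, $(\pi_{\idot}(\sH_{\bc}(X,W) \boxtimes \dd_T))^T = \sH_{\bc}(X,W) \boxtimes (\dd_T)^T$. Writing $\mf{t}$ in coordinates, $(\dd_T)^T$ is the polynomial ring $\C[\mf{t}]$ generated by the invariant vector fields, and $\mu_{\bc}(t)$ is precisely the element $1 \boxtimes t \in \sH_{\bc}(X,W) \boxtimes \C[\mf{t}]$. Quotienting by the central ideal generated by $\{\mu_{\bc}(t) - \chi(t)\}$ therefore sets each invariant vector field equal to the scalar $\chi(t)$, and the quotient is manifestly $\sH_{\bc}(X,W)$ as a sheaf of algebras. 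The only subtlety is the identification of the twist: I must check that the twist one picks up is exactly $\beta(\chi)$ as defined in (\ref{eq:betadef}). For the underlying differential operators this is the standard computation identifying quantum Hamiltonian reduction of $\dd_Y$ along a $T$-action with a TDO on $X$, which is reviewed in the appendix; the Dunkl--Opdam part of $\sH_{\bc}(Y,W)$ contributes nothing to the twist because, under the trivialization $Y = X\times T$, the reflection hypersurfaces in $Y$ are of the form $Z \times T$ with $Z$ a reflection hypersurface in $X$, so reduction carries $\F^1_{\bc}(Y,W)$ to $\F^1_{\bc}(X,W)$ compatibly with the anchor maps. Thus on each chart one gets a canonical isomorphism $\sH_{\beta(\chi),\bc}(X,W)|_{U} \iso \left((\pi_{\idot} \sH_{\bc}(Y,W))^T / \langle \mu_{\bc}(t) - \chi(t)\rangle\right)|_{U}$.

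It then remains to glue. Because the bundle $Y \to X$ is a \emph{principal} $T$-bundle and the whole construction (reduction by the central ideal) is canonical, the isomorphisms over two overlapping charts differ by an automorphism of $\sH_{\beta(\chi),\bc}(X,W)$ that restricts to the identity on $\mc{O}_X \rtimes W$ and is compatible with the order filtration; by the PBW theorem \ref{thm:PBW} such an automorphism acts trivially on the associated graded, and a standard filtered argument shows it is the identity. Hence the local isomorphisms agree on overlaps and glue to the desired global isomorphism of sheaves of algebras.

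The main obstacle I expect is not the algebra but the \emph{twist bookkeeping}: pinning down that the sheaf of TDOs produced by the Hamiltonian reduction is classified by $\beta(\chi) \in \HH^2(X,\Omega^{1,2}_X)$ and not some cohomologous-but-differently-presented class. This requires matching the connection-one-form description of $\beta(\chi)$ in (\ref{eq:betadef}) against the explicit local generators of the reduced algebra, and checking the cocycle that measures the failure of the local trivializations to patch is exactly the Atiyah/Chern class combination defining $\beta$. Everything else — $T$-invariants of a box product, centrality of the moment map image, the gluing — is routine given the results already established.
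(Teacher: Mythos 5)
There is a genuine gap, and it sits exactly where you deferred the work. Your gluing step asserts that over an overlap the two chart identifications differ by a filtered automorphism of $\sH_{\beta(\chi),\bc}(X,W)$ that is the identity on $\mc{O}_X \rtimes W$, acts trivially on the associated graded, and hence is the identity. That last implication is false: an automorphism sending a first-order section $\nu$ to $\nu + \langle \alpha, \sigma(\nu)\rangle$ for a closed one-form $\alpha$ (e.g.\ $\partial \mapsto \partial + df$ on $\dd_{\mathbb{A}^1}$) fixes $\mc{O}_X\rtimes W$, preserves the filtration, and is trivial on $\gr$, yet is not the identity. This is precisely the kind of automorphism that arises here: your chart identifications depend on a choice of trivialization of the $T$-bundle, two trivializations differ by a map to $T$ with transition function $g_{ij}$, and the induced comparison shifts first-order sections by $\chi(d\log g_{ij})$. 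These shifts do \emph{not} cancel; their cocycle is exactly the class $\beta(\chi)$. So your local computation identifies the reduction with the \emph{untwisted} $\sH_{\bc}(X,W)$ chart by chart, and the "twist bookkeeping" you postpone to the final paragraph is not a finishing touch but the entire content of the statement --- as written, your argument would equally well "prove" the reduction is isomorphic to $\sH_{\bc}(X,W)$ with zero twist, which is wrong whenever $\beta(\chi)\neq 0$.

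The paper's proof is organized to avoid this gluing problem altogether. It removes the reflection divisor: with $D=\bigcup_{\bc(w,Z)\neq 0}Z$, $U=X-D$, $V=\pi^{-1}(U)$, the restriction $\Pi:V\to U$ is a principal $T$-bundle and the Cherednik algebra there is just $\dd_V\rtimes W$, so Proposition \ref{prop:principaltwist} gives a \emph{global} isomorphism $(\Pi_{\idot}\dd_V\rtimes W)^T/\langle \mu'(t)-\chi(t)\rangle \iso \dd_U^{\beta(\chi)}\rtimes W$; pushing forward along the open inclusion yields one globally defined morphism $\tau:(\pi_{\idot}\sH_{\bc}(Y,W))^T\to \dd_X^{\beta(\chi)}(D)\rtimes W$, with the twist $\beta(\chi)$ built in from the start. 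Only then does one argue locally (trivializing $Y\simeq X\times T$) to check that the kernel of $\tau$ is exactly the moment-map ideal and the image is $\sH_{\beta(\chi),\bc}(X,W)$, both reducing to $\dd(T)^T/\langle t-\chi(t)\rangle\simeq\C$. To repair your proposal you would either have to carry out honestly the cocycle computation you sketch at the end (matching the $\chi(d\log g_{ij})$ shifts against the definition of $\beta$ in (\ref{eq:betadef})), or restructure along the paper's lines by embedding both sides into $\dd_X^{\beta(\chi)}(D)\rtimes W$ and comparing there.
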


\begin{proof}
As in the proof of Proposition \ref{lem:chainrule}, let $D = \bigcup_{\bc (w,Z) \neq 0} Z$, $U = X - D$, $E = \pi^{-1}(D)$ and $V = Y - E$. Then the restriction of $\pi$ to $V$ is a principal $T$-bundle $\Pi : V \rightarrow U$ and we have a Cartesian diagram 
$$
\xymatrix{
V \ar@{^{(}->}[r]^j \ar[d]_{\Pi} & Y \ar[d]^{\pi} \\
U \ar@{^{(}->}[r]^k & X
}
$$
Proposition \ref{prop:principaltwist} implies that there is an isomorphism 
\beq{eq:quoiso}
( \Pi_{\idot} \dd_V \rtimes W)^T / \langle \{ \mu'(t) - \chi(t) \ | \ t \in \mf{t} \} \rangle \stackrel{\sim}{\longrightarrow} \dd_U^{\beta(\chi)} \rtimes W. 
\eeq
Recall that $\dd_X^{\beta(\chi)}(D) \rtimes W = k_{\idot}( \dd_U^{\beta(\chi)} \rtimes W)$. Since $k_{\idot} (\Pi_{\idot}  \dd_V \rtimes W)^T = (k_{\idot} (\Pi_{\idot}  \dd_V \rtimes W))^T = (\pi_{\idot} (j_{\idot}  \dd_V \rtimes W))^T$ and $(\pi_{\idot} \sH_{\bc}(Y,W))^T$ is a subalgebra of $(\pi_{\idot} j_{\idot}  \dd_V \rtimes W)^T$, we have a morphism of sheaves $\tau : (\pi_{\idot} \sH_{\bc}(Y,W))^T \rightarrow \dd_X^{\beta(\chi)}(D) \rtimes W$. The isomorphism (\ref{eq:quoiso}) implies that $\langle \{ \mu_{\bc}(t) - \chi(t) \ | \ t \in \mf{t} \} \rangle$ is contained in the kernel of $\tau$. Therefore it suffices to show that $\langle \{ \mu_{\bc}(t) - \chi(t) \ | \ t \in \mf{t} \} \rangle$ is precisely the kernel of $\tau$ and that the image of $\tau$ is $\sH_{\beta(\chi),\bc}(X,W)$. Both of these statements are local. Thus, we may assume without loss of generality that $Y = X \times T$. In this case, both statements reduced to the statement $\dd (T)^T / \langle \{ t - \chi(t) \ | \ t \in \mf{t} \} \rangle \simeq \C$, which is clear. 
\end{proof}

\subsection{} As for differential operators on principal $T$-bundles, see section 2.5 of \cite{BBJantzen}, Proposition \ref{prop:principalWiso} implies that have an equivalence of categories: 

\begin{thm}\label{thm:monodromicequiv}
The functor 
$$
\LMod{(\sH_{\bc}(X, W),T,\chi)} \rightarrow \LMod{\sH_{\beta(\chi),\bc}(Y, W)}, \quad \mm \mapsto (\pi_{\idot} \mm)^T
$$
is an equivalence of categories with quasi-inverse $\ms{N} \mapsto \pi^* \ms{N}$. 
\end{thm}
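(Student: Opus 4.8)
The plan is to deduce Theorem~\ref{thm:monodromicequiv} from Proposition~\ref{prop:principalWiso} by mimicking the classical argument for $\dd$-modules on principal $T$-bundles, as in \cite[Section 2.5]{BBJantzen}. First I would observe that by Theorem~\ref{thm:PBW} and the fact that $\mu_{\bc}(\mf{t})$ is central in $(\pi_{\idot}\sH_{\bc}(Y,W))^T$, the sheaf $(\pi_{\idot}\sH_{\bc}(Y,W))^T$ is (up to completion issues, which do not arise here since $\mf{t}$ acts semisimply with integral weights on $T$-equivariant objects) the direct sum over $\chi \in \mf{t}^*$ of its generalized $\chi$-weight summands for the central $\mf{t}$-action, and that a $(T,\chi)$-equivariant $\sH_{\bc}(X,W)$-module $\mm$ — more precisely a $T$-equivariant $\sH_{\bc}(X,W)$-module whose moment map acts by $\chi$ — is the same thing via $\mm \mapsto (\pi_{\idot}\mm)^T$ as a module over the quotient appearing in Proposition~\ref{prop:principalWiso}, which is $\sH_{\beta(\chi),\bc}(Y,W)$. (I note the statement as printed has $X$ and $Y$ swapped relative to Proposition~\ref{prop:principalWiso}; I would read it in the only consistent way, with $(\pi_{\idot}\mm)^T$ living on the base.)

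The key steps, in order, are: (1) Since $\pi : Y \to X$ is a principal $T$-bundle, it is smooth and surjective, hence flat, and by the Lemma of the previous subsection (applied to $\pi$, or directly since $\pi$ is smooth with $\pi^{-1}(Z)$ being $T$-stable hence contained in $Y^w$ for any reflection because $G = T$ is connected) it is melys for every $\bc$; therefore Corollary~\ref{cor:melyspullback} gives an exact pull-back functor $\pi^* : \LMod{\sH_{\beta(\chi),\bc}(X,W)} \to \LMod{\sH_{\pi^*\beta(\chi),\pi^*\bc}(Y,W)}$. Locally $Y = X \times T$, so $\pi^*\beta(\chi)$ is the trivial twist and $\pi^*\bc = \bc$ (no reflection hypersurface of $X$ meets a $T$-orbit fibre transversally), giving $\pi^* : \LMod{\sH_{\beta(\chi),\bc}(X,W)} \to \LMod{\sH_{\bc}(Y,W)}$; and $\pi^*\ms{N}$ carries a canonical $(T,\chi)$-equivariant structure coming from the canonical $(T,\chi)$-equivariant structure on $\mc{O}_Y^\chi$-twisted pullbacks along the bundle, i.e. from the isomorphism $a^*\pi^* \cong \mc{O}_T^{\chi}\boxtimes\pi^*$ that holds because $\pi\circ a = \pi\circ p$ on $T\times Y$. (2) Conversely, for $\mm$ a $(T,\chi)$-equivariant $\sH_{\bc}(Y,W)$-module, $\pi_{\idot}\mm$ is a $(\pi_{\idot}\sH_{\bc}(Y,W))$-module with a compatible $T$-action, and the equivariance forces $\mu_{\bc}(t)$ to act on $(\pi_{\idot}\mm)^T$ by the scalar $\chi(t)$; hence $(\pi_{\idot}\mm)^T$ is a module over the quotient of Proposition~\ref{prop:principalWiso}, namely over $\sH_{\beta(\chi),\bc}(X,W)$. (3) The two functors are mutually quasi-inverse: the natural map $\ms{N} \to (\pi_{\idot}\pi^*\ms{N})^T$ and the natural map $\pi^*((\pi_{\idot}\mm)^T) \to \mm$ are isomorphisms, which is a statement local on $X$, so one reduces to $Y = X\times T$ where it is the elementary fact that $(T,\chi)$-equivariant $\dd_T$-modules are just vector spaces, equivalently that $\mc{O}_T^{\chi}$-twisted global sections and constant-sheaf pullback are inverse between $\LMod{\C}$ and $(T,\chi)$-equivariant $\dd_T$-modules. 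Exactness and the module structure then descend.

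Concretely I would organize the proof as: fix the local model $Y = X\times T$ where $\sH_{\bc}(Y,W) = \sH_{\bc}(X,W)\boxtimes\dd_T$ and $\mu_{\bc}(\mf{t})$ is $1\boxtimes(\text{invariant vector fields})$; check both unit and counit are isomorphisms there using that a $(T,\chi)$-equivariant $\dd_T$-module is canonically $\mc{O}_T^\chi \otimes_{\C} W$ for a unique vector space $W = $ its $\chi$-weight fibre; check that the constructions of step (1) and step (2) are visibly local on $X$ and $T$-equivariantly natural, so they glue; and invoke Proposition~\ref{prop:principalWiso} to identify the target of step (2) with $\LMod{\sH_{\beta(\chi),\bc}(X,W)}$ globally. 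Finally, note $\pi^*$ is exact by Corollary~\ref{cor:melyspullback} and $(\pi_{\idot}(-))^T = (\pi_{\idot}(-))^{\mf{t}}$ is exact on $T$-equivariant quasi-coherent sheaves because $\pi$ is affine and taking $T$-invariants is exact in characteristic zero, so the equivalence is of abelian categories.

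The main obstacle I expect is bookkeeping rather than a substantive difficulty: precisely matching the definition of $(T,\chi)$-equivariant module (the cocycle condition of \eqref{eq:cocycle}, phrased via $\mc{O}_G^\chi$ and the melys pull-backs $a^*, p^*, (m\times\id)^*$) with the ``$\mu_{\bc}$ acts by $\chi$'' description of a module over the Hamiltonian reduction, and doing so compatibly with the sheaf-of-algebras isomorphism of Proposition~\ref{prop:principalWiso}. Getting the $T$-action, the $\mu_{\bc}$-central action, and the descent along $\pi$ to talk to each other cleanly — and verifying that the adjunction (quasi-)inverse really is $\pi^*$ rather than some twist of it — is the one place where care is needed; everything else follows the TDO template in \cite{BBJantzen} essentially verbatim once Corollary~\ref{cor:melyspullback} and Proposition~\ref{prop:principalWiso} are in hand.
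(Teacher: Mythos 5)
Your proposal is correct and follows essentially the same route as the paper: the paper offers no detailed argument here, simply deducing the theorem from Proposition \ref{prop:principalWiso} via the standard monodromic-descent argument for principal $T$-bundles in \cite[Section 2.5]{BBJantzen}, which is exactly what you spell out (local model $Y = X \times T$, melys pull-back $\pi^*$, invariants of $\pi_{\idot}$). You are also right that the roles of $X$ and $Y$ are swapped in the printed statement; the consistent reading, with the equivariant modules on the total space $Y$ and the $\beta(\chi)$-twisted modules on the base $X$, is the intended one.
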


The above theorem can be extended in the obvious way to the category of weakly $T$-equivariant $\sH_{\bc}(X, W)$-modules with generalized central character $\overline{\chi} \in \mf{t}^* / \mathbb{X}(T)$, as in \cite{BBJantzen}. We leave the details to the interested reader. 

\section{Affinity of Cherednik algebras on projective space}

In this section we prove the main result, which is a criterion for the affinity of Cherednik algebras on $\mathbb{P}(V)$.

\subsection{} Let $V$ be a vector space and $W \subset GL(V)$ a finite group. For each $(s,H) \in \mc{S}(V)$ and $(s,H^*) \in \mc{S}(V^*)$, we fix $\alpha_s \in V^*$ and $\alpha_s^{\vee} \in V$ such that $H = \Ker \alpha_s$ and $H^* = \Ker \alpha_s^{\vee}$, normalized so that $\alpha_s(\alpha_s^{\vee}) = 2$. Let $\Vo = V - \{ 0 \}$ and $\pi : \Vo \rightarrow \mathbb{P}(V)$ the quotient map. It is a principal $T$-bundle, where $T = \Cs$ acts on $V$ by dilations i.e. $t \cdot v = t^{-1} v$ for $t \in T$ and $v \in V$. Since $W$ acts on $V$ it also acts on $\mathbb{P}(V)$. For each $s \in W$, $\codim \mathbb{P}(V)^{s} = 1$ if and only if $s$ is a reflection, in which case $\mathbb{P}(V)^{s} = \mathbb{P}(H) \cup \C \cdot \alpha_s^{\vee}$. 

\begin{lem}\label{lem:alltwisteddiff}
We have $\HH^2(\mathbb{P}(V),\Omega_{\mathbb{P}}^{1,2}) \simeq \C$ and the morphism $\beta$ of (\ref{eq:betadef}) is an isomorphism. 
\end{lem}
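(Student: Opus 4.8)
The plan is to compute the hypercohomology group $\HH^2(\mathbb{P}(V),\Omega^{1,2}_{\mathbb{P}})$ directly from the hypercohomology spectral sequence of the two-term complex $\Omega^1_{\mathbb{P}} \xrightarrow{d} (\Omega^2_{\mathbb{P}})^{\mathrm{cl}}$, concentrated in degrees $1$ and $2$, and then to identify $\beta$ as an isomorphism by a dimension count together with injectivity. Write $n = \dim_{\C} V$, so $\mathbb{P} = \mathbb{P}(V) = \mathbb{P}^{n-1}$. The spectral sequence has $E_1^{p,q} = H^q(\mathbb{P},\mathcal{K}^p)$ where $\mathcal{K}^1 = \Omega^1_{\mathbb{P}}$ in degree $1$ and $\mathcal{K}^2 = (\Omega^2_{\mathbb{P}})^{\mathrm{cl}}$ in degree $2$, converging to $\HH^{p+q}(\mathbb{P},\Omega^{1,2}_{\mathbb{P}})$. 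The total degree $2$ contributions come from $(p,q) = (1,1)$ and $(p,q) = (2,0)$, i.e.\ from $H^1(\mathbb{P},\Omega^1_{\mathbb{P}})$ and $H^0(\mathbb{P},(\Omega^2_{\mathbb{P}})^{\mathrm{cl}})$.

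First I would record the relevant Hodge-theoretic facts for projective space: $H^1(\mathbb{P},\Omega^1_{\mathbb{P}}) \simeq \C$ (this is the Hodge piece of $H^2(\mathbb{P},\C)$, spanned by the class of a hyperplane), and $H^0(\mathbb{P},\Omega^2_{\mathbb{P}}) = 0$ for $n - 1 \ge 1$ (there are no global holomorphic $2$-forms on $\mathbb{P}^{n-1}$), whence $H^0(\mathbb{P},(\Omega^2_{\mathbb{P}})^{\mathrm{cl}}) = 0$ as well. Thus the only surviving $E_1$-term in total degree $2$ is $H^1(\mathbb{P},\Omega^1_{\mathbb{P}}) \simeq \C$. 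I then need to check this term survives to $E_\infty$: the differential $d_1 \colon E_1^{1,1} \to E_1^{2,1} = H^1(\mathbb{P},(\Omega^2_{\mathbb{P}})^{\mathrm{cl}})$ and the incoming differential $d_1 \colon E_1^{0,2} = H^2(\mathbb{P},\Omega^1_{\mathbb{P}}) \to E_1^{1,2}$... more to the point, the relevant incoming differential into $E_\bullet^{1,1}$ comes from $E_\bullet^{0,\cdot}$, but $\mathcal{K}$ vanishes in degree $0$, so there is nothing there; the outgoing differential lands in $H^1(\mathbb{P},(\Omega^2_{\mathbb{P}})^{\mathrm{cl}})$, and I would argue this vanishes either because the map $H^1(\mathbb{P},\Omega^1_{\mathbb{P}}) \to H^1(\mathbb{P},\Omega^2_{\mathbb{P}})$ is induced by the de Rham differential $d$, which is the zero map on cohomology sheaves (a standard degeneration fact — on a smooth projective variety the Hodge–de Rham spectral sequence degenerates at $E_1$, forcing the maps $d$ in such Čech/hypercohomology computations to vanish), or more elementarily by citing the appendix's discussion of TDO parameters. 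Alternatively, since $\mathbb{P}^{n-1}$ is a smooth projective rational variety, one has $\HH^2(\mathbb{P},\Omega^{1,2}_{\mathbb{P}}) = F^1 H^2_{\mathrm{dR}}(\mathbb{P}) = H^1(\mathbb{P},\Omega^1_{\mathbb{P}}) \simeq \C$, which is the cleanest route and which I would take as the main argument, citing the appendix for the identification of TDO parameters with $\HH^2(X,\Omega^{1,2}_X)$.

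For the second assertion, recall from (\ref{eq:betadef}) that $\beta \colon \mf{t}^* \to \HH^2(\mathbb{P},\Omega^{1,2}_{\mathbb{P}})$ is the classifying map sending $\chi \in \mf{t}^*$ to the TDO obtained by Hamiltonian reduction of $\dd_{\Vo}$ along the principal $T = \Cs$-bundle $\pi \colon \Vo \to \mathbb{P}(V)$ at the twist $\chi$. Since $\dim \mf{t}^* = 1 = \dim_{\C}\HH^2(\mathbb{P},\Omega^{1,2}_{\mathbb{P}})$, it suffices to show $\beta$ is injective, equivalently nonzero. I would do this by exhibiting that $\beta(1)$ — the reduction at a generator of $\mf{t}^*$ — is the TDO of twisted differential operators on $\mathcal{O}_{\mathbb{P}}(1)$ (or some nonzero multiple thereof), which is the classical computation that $(\pi_{\idot}\dd_{\Vo})^T/\langle\theta_{\mathrm{eu}} - \chi\rangle \simeq \dd_{\mathbb{P}}^{\chi}$ for the Euler vector field $\theta_{\mathrm{eu}}$; this is precisely Proposition \ref{prop:principaltwist} applied to $Y = \Vo$, $X = \mathbb{P}(V)$, $W = 1$. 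Since the sheaf of honest (untwisted) differential operators on $\mathbb{P}$ is not isomorphic to the sheaf of differential operators on $\mathcal{O}_{\mathbb{P}}(1)$ (e.g.\ the latter has no nonzero global... actually it does; rather, one distinguishes them by the fact that $\dd_{\mathbb{P}}$-module $\mathcal{O}_{\mathbb{P}}$ has nonzero global sections, unlike for generic twist, or simply by the explicit nonzero value of the Chern/first-order obstruction class), we conclude $\beta \neq 0$, hence injective, hence — by equality of dimensions — an isomorphism.

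The main obstacle I expect is the bookkeeping in the hypercohomology spectral sequence, specifically the vanishing of the differential $d_1 \colon H^1(\mathbb{P},\Omega^1_{\mathbb{P}}) \to H^1(\mathbb{P},(\Omega^2_{\mathbb{P}})^{\mathrm{cl}})$: one must argue this carefully rather than hand-wave it, and the honest justification is the $E_1$-degeneration of the Hodge–de Rham spectral sequence for the smooth projective variety $\mathbb{P}^{n-1}$. Everything else — the Hodge numbers of projective space, the identification of $\beta(1)$ via Hamiltonian reduction, and the final dimension count — is routine given the results already established in the excerpt and appendix.
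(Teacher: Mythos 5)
Your computation is correct in substance, but it reaches the dimension count by a genuinely different route from the paper. The paper argues locally: $\mathbb{P}(V)$ is covered by copies of $\mathbb{A}^{n-1}$, whose algebraic de Rham cohomology vanishes in positive degrees, so the inclusion $d\mc{O}_{\mathbb{P}}[-1] \hookrightarrow \Omega^{1,2}_{\mathbb{P}}$ is a quasi-isomorphism; then the sequence $0 \to \C_{\mathbb{P}} \to \mc{O}_{\mathbb{P}} \to d\mc{O}_{\mathbb{P}} \to 0$, together with $H^1(\mathbb{P},\mc{O}_{\mathbb{P}}) = H^2(\mathbb{P},\mc{O}_{\mathbb{P}}) = 0$, identifies $\HH^2(\mathbb{P}(V),\Omega^{1,2}_{\mathbb{P}})$ with $H^2(\mathbb{P}(V),\C) \simeq \C$. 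You instead go through Hodge theory, either via the hypercohomology spectral sequence or via $\HH^2(\mathbb{P},\Omega^{1,2}_{\mathbb{P}}) = F^1H^2_{\mathrm{dR}}(\mathbb{P})$; the paper's route is self-contained (only the affine Poincar\'e lemma), while yours leans on the true but here-unproved comparison of $\Omega^{1,2}$ with the truncation $\Omega^{\ge 1}$ plus Hodge--de Rham degeneration, which is standard and citable from \cite{BBJantzen}. One caveat in your spectral-sequence variant: degeneration of Hodge--de Rham does not by itself kill $d_1 \colon H^1(\mathbb{P},\Omega^1) \to H^1(\mathbb{P},(\Omega^2)^{\mathrm{cl}})$, because $(\Omega^2)^{\mathrm{cl}}$ is not an $\mc{O}$-module (closed forms are not stable under multiplication by functions), so it is not a Hodge sheaf and degeneration only controls the composite into $H^1(\mathbb{P},\Omega^2)$; for $\mathbb{P}(V)$ this is easily repaired, e.g.\ the hyperplane class is represented by the \v{C}ech cocycle $d\log(z_i/z_j)$, which $d$ kills on the nose, and in any case you correctly make the $F^1H^2_{\mathrm{dR}}$ identification your main argument. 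For the second assertion, your argument, once the hesitation is stripped out, comes down to the nonvanishing of the class of $\mc{O}(1)$, which is exactly the paper's argument: $(\pi_{\idot}\mc{O}_{\Vo})^{\lambda_n} \simeq \mc{O}(n)$, so $\beta_{\Z}(n)$ is the $d\log$ class of $\mc{O}(n)$, nonzero for $n \neq 0$ since its image in $H^2(\mathbb{P}(V),\C)$ is $n$ times the hyperplane class. Trying instead to distinguish $\dd_{\mathbb{P}(V)}$ from $\dd^{\beta(1)}_{\mathbb{P}(V)}$ as TDOs is circular unless you appeal to precisely this Chern-class computation (the classification is by the class in $\HH^2$), so state that computation directly rather than the "generic twist" heuristic.
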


\begin{proof}
For each $n \in \Z$, let $\lambda_n$ be the character of $\Cs$ given by $t \mapsto t^n$. Then, $(\pi_{\idot} \mc{O}_{\Vo})^{\lambda_n} \simeq \mc{O}(n)$. This implies that $\beta$ is injective. Therefore, it suffices to show that $\dim \HH^2(\mathbb{P}(V),\Omega_{\mathbb{P}}^{1,2}) = 1$. Since $\mathbb{P}(V)$ can be covered by open sets isomorphic to $\mathbb{A}^{n-1}$ and $H^{i}_{DR}(\mathbb{A}^{n-1}) = 0$ for $i \neq 0$, the algebraic de Rham complex is acyclic. This implies that the map $d \mc{O}_{\mathbb{P}} [-1] \rightarrow \Omega^{1,2}_{\mathbb{P}}$ is a quasi-isomorphism. Therefore, the map $H^1(\mathbb{P}(V), d\mc{O}_{\mathbb{P}}) = \HH^2(\mathbb{P}(V), d\mc{O}_{\mathbb{P}} [-1]) \rightarrow \HH^2(\mathbb{P}(V),\Omega^{1,2}_{\mathbb{P}})$ is an isomorphism. The long exact sequence associated to the short exact sequence 
$$
0 \rightarrow \C_{\mathbb{P}} \rightarrow \mc{O}_{\mathbb{P}} \rightarrow d\mc{O}_{\mathbb{P}} \rightarrow 0
$$
shows that $H^1(\mathbb{P}(V), d\mc{O}_{\mathbb{P}})  \simeq H^2(\mathbb{P}(V), \C_{\mathbb{P}})$ is one-dimensional.  
\end{proof}

Lemma \ref{lem:alltwisteddiff} implies the well-known fact that twisted differential operators on projective space are locally trivial in the Zariski topology. We identify $\HH^2(\mathbb{P}(V),\Omega_{\mathbb{P}}^{1,2})$ with $\C$ such that if $\omega = n \in \Z$, then $\dd_{\mathbb{P}(V)}^{\omega}$ acts on $\mc{O}(n)$. The action of $W$ on $\HH^2(\mathbb{P}(V),\Omega_{\mathbb{P}}^{1,2})$ is trivial, therefore the sheaf $\dd_{\mathbb{P}(V)}^{\omega}$ is $W$-equivariant for all $\omega$. 

\subsection{} When $X = V$, the \textit{rational Cherednik algebra} $\H_{\bc}(V,W)$, as introduced by Etingof and Ginzburg, can be described as an algebra given by generators and relations. Namely, it is the quotient of the skew group algebra $T(V \oplus V^*) \rtimes W$ by the ideal generated by the relations
\begin{equation}\label{eq:rel}
[x,x'] = 0, \qquad [y,y'] = 0, \qquad [y,x] = x(y) - \sum_{s \in \mathcal{S}} \bc(s) \alpha_s(y) x(\alpha_s^\vee) s,
\end{equation}
for all $x,x' \in V^*$ and $y,y' \in V$. Let $x_1, \ds, x_n$ be a basis of $V^*$ and $y_1, \ds, y_n \in V$ the dual basis. The Euler element is 
\begin{align*}
\mathbf{h} & = \sum_{i = 1}^n x_i y_i - \sum_{s \in \mc{S}} \frac{2 \bc(s)}{1 - \lambda_s} s \\
 & = \sum_{i = 1}^n y_i x_i - n + \sum_{s \in \mc{S}} 2 \bc(s) \left( 1 - \frac{1}{1 - \lambda_s} \right) s.
\end{align*}
One can easily check that $[\mathbf{h},x] = x$, $[\mathbf{h},y] = -y$ and $[\mathbf{h},w] = 0$ for all $x \in V^*$, $y \in V$ and $w \in W$. The element $\mathbf{h}$ defines an internal grading on $\H_{\bc}(V,W)$, where $\deg (x) = 1$, $\deg (y) = -1$ and $\deg ( w) = 0$. The $m$th graded piece of $\H_{\bc}(V,W)$ is denoted $\H_{\bc}(V,W)_m$. 

\subsection{Dunkl embedding} 

The open subset $U = V - D$ of $V$ is the complement to the zero locus of $\prod_{s\in \mc{S}(V)} \alpha_s$. For $y\in V$, thought of as a constant coefficient differential operator, the corresponding Dunkl operator $D_y$ equals 
$$
\partial_y+\sum_{s\in\mathcal{S}}\frac{2\mathbf{c}(s)}{1-\lambda_s}\frac{\alpha_s(y)}{\alpha_s}(s-1) \in \Gamma(U, \dd_U \rtimes W).
$$
The presentation of $\H_{\bc}(V,W)$ given above is identified with the Cherednik algebra, defined in terms of Dunkl operators, via the injective algebra homomorphism 
\[ 
\H_{\bc}(V,W) \hookrightarrow \Gamma(U,\dd_U \rtimes W); \quad w \mapsto w, x \mapsto x, y \mapsto D_y, 
\]
for all $w \in W$, $x\in V^*$ and $y\in V$. The image of $\mathbf{h}$ under the Dunkl embedding is
\beq{eq:hDunkl}
\mathbf{h} = \sum_{i = 1}^n x_i \frac{\pa}{\pa x_i} - \sum_{s \in \mc{S}} \frac{2\bc(s)}{1 - \lambda_s}.
\eeq

\subsection{The sheaf of Cherednik algebras on $\mathbb{P}(V)$}

Set $\rho_{\bc} = \sum_{s \in \mc{S}} \frac{2 \bc(s)}{1 - \lambda_s}$. As noted in example 2.20 of \cite{ChereSheaf}, the global sections of $\sH_{\omega,\bc}(\mathbb{P}(V),W)$ are related to $\H_{\bc}(V,W)$ as follows: 

\begin{lem}\label{lem:globalsections}
The global sections $\H_{\omega,\bc}(\mathbb{P}(V),W)$ equals $\H_{\bc}(V,W)_0 / (\mathbf{h} + \rho_{\bc} - \omega)$.
\end{lem}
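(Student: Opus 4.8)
The plan is to realize $\H_{\omega,\bc}(\mathbb{P}(V),W)$ as a quantum Hamiltonian reduction of the Cherednik algebra on $V^{o}$ and then unwind the torus invariants using the Euler element. Concretely, apply Proposition \ref{prop:principalWiso} to the principal $T = \Cs$-bundle $\pi : \Vo \rightarrow \mathbb{P}(V)$ (with $T$ acting by dilations, as set up in this section). Since $\mf{t}$ is one-dimensional with a fixed generator $\eul$ (the scaling vector field), the moment map $\mu_{\bc} : \mf{t} \rightarrow \F^1_{\bc}(\Vo,W)$ sends $\eul$ to the image of the Euler element under the Dunkl embedding, which by \eqref{eq:hDunkl} is $\mathbf{h} = \sum_i x_i \partial_{x_i} - \rho_{\bc}$; and the parameter $\beta(\chi)$ corresponds under the identification $\HH^2(\mathbb{P}(V),\Omega^{1,2}_{\mathbb{P}}) \simeq \C$ of Lemma \ref{lem:alltwisteddiff} to the scalar $\chi(\eul)$. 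So Proposition \ref{prop:principalWiso}, together with the fact that global sections is exact on $\mathbb{P}(V)$ in the relevant range (or simply by taking $T$-invariant global sections directly), gives
$$
\H_{\omega,\bc}(\mathbb{P}(V),W) \;\simeq\; \big(\Gamma(\Vo,\sH_{\bc}(\Vo,W))\big)^T \big/ \langle \mathbf{h} - \omega \rangle.
$$

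Next I would identify $\Gamma(\Vo,\sH_{\bc}(\Vo,W))$ with $\H_{\bc}(V,W)$ itself: since $\Vo = V \setminus \{0\}$ and $\{0\}$ has codimension $n \ge 2$ (or $=1$, handled separately if $\dim V = 1$), restriction $\Gamma(V,-) \rightarrow \Gamma(\Vo,-)$ is an isomorphism on the structure sheaf and hence, via the PBW filtration of Theorem \ref{thm:PBW} and the identification $\gr \sH = \pi_{\idot}\mc{O}_{T^*} \rtimes W$, also on the Cherednik algebra; this is where the Dunkl embedding presentation is used to recognize the answer as $\H_{\bc}(V,W)$. The $T$-action is the one induced by dilations, which differentiates to the internal grading given by $\mathbf{h}$, so $\big(\H_{\bc}(V,W)\big)^T = \H_{\bc}(V,W)_0$, the degree-zero part for the grading $\deg x = 1$, $\deg y = -1$, $\deg w = 0$.

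Finally I would reconcile the two expressions for the Euler element. The reduction ideal is generated by $\mathbf{h} - \omega$ where $\mathbf{h}$ is the image of the scaling vector field, i.e. the first displayed form $\sum_i x_i\partial_{x_i} - \rho_{\bc}$ from \eqref{eq:hDunkl}; but the statement of the lemma uses the \emph{algebraic} Euler element $\mathbf{h}_{\mathrm{alg}} = \sum_i x_i y_i - \sum_s \tfrac{2\bc(s)}{1-\lambda_s} s$ of the abstract presentation \eqref{eq:rel}. Under the Dunkl embedding $y_i \mapsto D_{y_i} = \partial_{y_i} + (\text{reflection terms})$, a short computation gives $\mathbf{h}_{\mathrm{alg}} = \sum_i x_i\partial_{x_i} = (\sum_i x_i\partial_{x_i} - \rho_{\bc}) + \rho_{\bc}$, since $\sum_i x_i(\alpha_s(y_i)x(\alpha_s^\vee))$ telescopes to $\alpha_s\cdot$(coefficient) making the $s$-terms combine to $\rho_{\bc}$ acting as a scalar on each isotypic component — more precisely one checks directly that $\sum_i x_i(s-1) = 0$ is false but $\sum_s \tfrac{2\bc(s)}{1-\lambda_s}\sum_i x_i\alpha_s(y_i)x(\alpha_s^\vee)s$ contributes exactly $\rho_{\bc}$ after using $\alpha_s = \sum_i \alpha_s(y_i)x_i$ and $\alpha_s(\alpha_s^\vee)=2$, so that $\mathbf{h}_{\mathrm{alg}}$ maps to $\sum_i x_i\partial_{x_i} - \rho_{\bc} + \rho_{\bc}$ wait — the cleanest route is to note both $\mathbf{h}_{\mathrm{alg}}$ and $\sum_i x_i\partial_{x_i}$ are degree-zero central-ish elements differing by a scalar $\rho_{\bc}$, verified on the polynomial representation. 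Hence $\mathbf{h} - \omega = \mathbf{h}_{\mathrm{alg}} - \rho_{\bc} - \omega$, and substituting gives $\H_{\omega,\bc}(\mathbb{P}(V),W) \simeq \H_{\bc}(V,W)_0 / (\mathbf{h}_{\mathrm{alg}} + \rho_{\bc} - \omega)$, which is the claim (with $\mathbf{h}$ in the statement meaning $\mathbf{h}_{\mathrm{alg}}$; there may be a sign convention on $\rho_{\bc}$ to match, traceable to the convention $t\cdot v = t^{-1}v$).

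The main obstacle is the bookkeeping in the last paragraph: pinning down the exact scalar (and sign) relating the geometric moment-map Euler element on $\Vo$ to the algebraic Euler element $\mathbf{h}$ of the presentation, and matching the sign of $\beta(\chi)$ with the convention $\omega = n \mapsto \mc{O}(n)$ fixed after Lemma \ref{lem:alltwisteddiff}. Everything else — the reduction isomorphism, the $\Vo$-versus-$V$ comparison, and $(-)^T = (-)_0$ — is formal given Proposition \ref{prop:principalWiso}, Theorem \ref{thm:PBW}, and Lemma \ref{lem:alltwisteddiff}.
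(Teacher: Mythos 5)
Your overall strategy coincides with the paper's first step: use Proposition \ref{prop:principalWiso} for the bundle $\pi : \Vo \to \mathbb{P}(V)$ to produce a map $\H_{\bc}(V,W)_0 = \H_{\bc}(V,W)^T \to \H_{\omega,\bc}(\mathbb{P}(V),W)$, and use \eqref{eq:hDunkl} to see that $\mathbf{h}+\rho_{\bc}-\omega$ dies under it. The genuine gap is the passage you dismiss with ``global sections is exact on $\mathbb{P}(V)$ in the relevant range (or simply by taking $T$-invariant global sections directly)''. Proposition \ref{prop:principalWiso} is an isomorphism of \emph{sheaves}; taking $\Gamma(\mathbb{P}(V),-)$ of the quotient $(\pi_{\idot}\sH_{\bc}(\Vo,W))^T/\langle \mu_{\bc}(t)-\chi(t)\rangle$ does not formally yield $\Gamma(\Vo,\sH_{\bc}(\Vo,W))^T/\langle \mathbf{h}-\omega\rangle$: surjectivity of $\Gamma$ onto the quotient requires killing a connecting map into $H^1$ of the ideal sheaf, and since $\mathbf{h}-\omega$ is a central non-zero-divisor that ideal sheaf is isomorphic to $(\pi_{\idot}\sH_{\bc}(\Vo,W))^T$ itself, whose $H^1$ is not obviously zero (for $\dim V=2$ its associated graded visibly has nonzero $H^1$ in $T$-weight zero, coming from $H^1(\Vo,\mc{O}_{\Vo})\otimes \C[V^*]$). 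This is exactly where the paper does its real work: it filters $\sH_{\omega,\bc}(\mathbb{P}(V),W)$ by the order filtration, uses the exact sequences $0\to\F^{m-1}_{\omega,\bc}\to\F^m_{\omega,\bc}\to(\sym^m\Theta_{\mathbb{P}(V)})\o W\to 0$ and the vanishing of higher cohomology of $\sym^m\Theta_{\mathbb{P}(V)}$ to get $\mathbb{R}^i\Gamma(\F^m_{\omega,\bc}(\mathbb{P}(V),W))=0$ for $i>0$, identifies $\gr_{\F}\H_{\omega,\bc}(\mathbb{P}(V),W)$ with $\bigl(\C[x_iy_j]/(\sum_i x_iy_i)\bigr)\rtimes W$ (functions on the Hamiltonian reduction $T^*\mathbb{P}(V)$), and then deduces both surjectivity of $\H_{\bc}(V,W)_0\to\H_{\omega,\bc}(\mathbb{P}(V),W)$ and that the kernel is no larger than $(\mathbf{h}+\rho_{\bc}-\omega)$ by comparing associated graded algebras. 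Some argument of this cohomological type (or an equivalent weight-by-weight computation) is indispensable; without it your proposal only produces a map and does not establish the isomorphism.

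A secondary, fixable issue is the Euler-element bookkeeping in your last paragraph: the moment map sends the generator of $\mf{t}$ to the vector field $\sum_i x_i\partial_{x_i}$ (up to sign), whereas \eqref{eq:hDunkl} says the Dunkl image of the algebraic $\mathbf{h}$ is $\sum_i x_i\partial_{x_i}-\rho_{\bc}$; so the reduction relation $\sum_i x_i\partial_{x_i}=\omega$ translates into $\mathbf{h}+\rho_{\bc}-\omega=0$. Your paragraph first asserts $\mathbf{h}\mapsto\sum_i x_i\partial_{x_i}$, then writes $\mathbf{h}-\omega=\mathbf{h}_{\mathrm{alg}}-\rho_{\bc}-\omega$ before concluding with the opposite sign, and leaves the sign ``traceable to the convention $t\cdot v=t^{-1}v$'' unresolved; this should be pinned down, but it is routine once \eqref{eq:hDunkl} is invoked.
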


\begin{proof}
By Proposition \ref{prop:principalWiso} we have a morphism 
$$
\H_{\bc}(V,W)_0 = \H_{\bc}(V,W)^T \rightarrow \H_{\bc}(\Vo,W)^T \rightarrow \H_{\omega,\bc}(\mathbb{P}(V),W). 
$$
Equation (\ref{eq:hDunkl}) implies that the operator $\mathbf{h} + \rho_{\bc} - \omega$ is in the kernel of this map because it is in the kernel of the composite
$$
\H_{\bc}(V,W)_0 \rightarrow \H_{\omega,\bc}(\mathbb{P}(V),W) \rightarrow \sH_{\omega,\bc}(\mathbb{P}(V),W) \hookrightarrow \dd^{\omega}_{\mathbb{P}(V)}(D) \rtimes W.
$$
To prove that $\H_{\bc}(V,W)_0 / (\mathbf{h} + \rho_{\bc} - \omega) \rightarrow \H_{\omega,\bc}(\mathbb{P}(V),W)$ is an isomorphism, we consider the associated graded morphism. We have $\gr_{\mc{F}} \H_{\bc}(V,W)_0 = \C[ x_i y_j \ | \ i,j = 1, \ds, n ] \rtimes W$. We claim that 
\begin{align*}
\gr_{\mc{F}} \H_{\omega,\bc}(\mathbb{P}(V),W) & = \Gamma(\mathbb{P}(V), \pi_{\idot} \mc{O}_{T^* \mathbb{P}(V)} \rtimes W) \\
 & = \left( \C[ x_i y_j \ | \ i,j = 1, \ds, n ]  / \left( \sum_{i = 1}^n x_i y_i \right) \right) \rtimes W.
\end{align*}
The second equality just follows from the usual description of $T^* \mathbb{P}(V)$ as the Hamiltonian reduction of $T^* \Vo = \Vo \times V^*$ with respect to the induced action of $T$. The first equality follows from Theorem \ref{thm:PBW}, once one takes into account that the short exact sequences 
$$
0 \rightarrow \F^{m-1}_{\omega,\bc}(\mathbb{P}(V),W) \rightarrow \F^m_{\omega,\bc}(\mathbb{P}(V),W) \rightarrow (\sym^m \Theta_{\mathbb{P}(V)}) \o W \rightarrow 0
$$
imply by induction that $\mathbb{R}^i \Gamma (\F^m_{\omega,\bc}(\mathbb{P}(V),W)) = 0$ for $i > 0$. Therefore, the filtered morphism $\H_{\bc}(V,W)_0 \rightarrow \H_{\omega,\bc}(\mathbb{P}(V),W)$ is surjective, and hence so too is $\H_{\bc}(V,W)_0 / (\mathbf{h} + \rho_{\bc} - \omega) \rightarrow \H_{\omega,\bc}(\mathbb{P}(V),W)$. On the other hand, the associated graded of $\H_{\bc}(V,W)_0 / (\mathbf{h} + \rho_{\bc} - \omega)$ is a quotient of the algebra $\left( \C[ x_i y_j \ | \ i,j = 1, \ds, n ]  / \left( \sum_{i = 1}^n x_i y_i \right) \right) \rtimes W$.  
\end{proof}

\subsection{}\label{sec:mainthm} Let $\Irr W$ be the set of all isomorphism classes of irreducible $W$-modules. The element 
$$
\mathbf{z} := \sum_{s \in \mc{S}} 2 \bc(s) \left( 1 - \frac{1}{1 - \lambda_s} \right) s = - \mathbf{z}_0 + \sum_{s \in \mc{S}} 2 \bc (s) s. 
$$
belongs to the centre of $\C W$. For each $\lambda \in \Irr W$, let $c_{\lambda}$ be the scalar by which $\mathbf{z}$ acts on $\lambda$ and $d_{\lambda}$ the scalar by which $\mbf{z}_0$ acts on $\lambda$. Set
$$
a_{\lambda} := \rho_{\bc} + c_{\lambda} - n - \omega, \quad b_{\lambda} := \rho_{\bc} - d_{\lambda} - \omega.
$$
The sheaf of algebras $\sH_{\omega,\bc}(\mathbb{P}(V),W)$ is said to be \textit{affine} if the global sections functor $\Gamma$ induces an equivalence of categories  
$$
\Gamma : \LMod{\sH_{\omega,\bc}(\mathbb{P}(V),W)} \iso \LMod{\H_{\omega,\bc}(\mathbb{P}(V),W)}.
$$

\begin{thm}\label{thm:affine}
Let $a_{\lambda}$ and $b_{\lambda}$ be as above. 
\begin{enumerate}
\item The functor $\Gamma$ is exact provided $a_{\lambda} \notin \Z_{\ge 0}$ for all $\lambda \in \Irr W$. 
\item The functor $\Gamma$ is conservative provided $b_{\lambda} \notin \Z_{> 0}$ for all $\lambda \in \Irr W$.
\end{enumerate}
Hence, the sheaf of algebras $\sH_{\omega,\bc}(\mathbb{P}(V),W)$ is affine provided $a_{\lambda} \notin \Z_{\ge 0}$ and $b_{\lambda} \notin \Z_{> 0}$ for all $\lambda \in \Irr W$.
\end{thm}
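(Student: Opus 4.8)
The plan is to transport the proof of affinity for sheaves of twisted differential operators on $\mathbb{P}(V)$ (cf. \cite[Theorem 1.6.5]{HTT}) through the monodromic equivalence of Theorem~\ref{thm:monodromicequiv}. One may assume $\dim V=n\ge 2$, as for $n=1$ the space $\mathbb{P}(V)$ is a point. Recall $\pi\colon\Vo\to\mathbb{P}(V)$ is the principal $T=\C^\times$-bundle given by dilations; choose $\chi\in\mf{t}^*=\C$ with $\beta(\chi)=\omega$ via Lemma~\ref{lem:alltwisteddiff}. Theorem~\ref{thm:monodromicequiv} identifies $\LMod{\sH_{\omega,\bc}(\mathbb{P}(V),W)}$ with the category of $(T,\chi)$-equivariant $\sH_{\bc}(\Vo,W)$-modules, $\ms{N}\mapsto\pi^*\ms{N}$, and since $\pi$ is affine, $H^i(\mathbb{P}(V),\ms{N})=H^i(\Vo,\pi^*\ms{N})^T$. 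As $\Vo=V\smallsetminus\{0\}$ with $\{0\}$ of codimension $\ge 2$, restriction gives $\Gamma(\Vo,\sH_{\bc}(\Vo,W))=\H_{\bc}(V,W)$; thus $M:=\Gamma(\Vo,\pi^*\ms{N})$ is a $\Z$-graded $\H_{\bc}(V,W)$-module on whose degree-$m$ component the Euler element $\mathbf{h}$ acts by $\omega-\rho_{\bc}+m$ (cf. (\ref{eq:hDunkl}) and Lemma~\ref{lem:globalsections}), and $\pi^*\ms{N}=\widetilde M|_{\Vo}$ for $\widetilde M$ the associated $\sH_{\bc}(V,W)$-module on $V$. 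Under these identifications $\Gamma(\mathbb{P}(V),\ms{N})$ is the degree-zero part $M_0$, and $-\otimes_{\mc{O}}\mc{O}(j)$ is the grading shift $M\mapsto M[j]$ (so $\Gamma(\mathbb{P}(V),\ms{N}(j))=M_j$).

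For exactness of $\Gamma$ it is enough to show $H^i(\mathbb{P}(V),\ms{N})=0$ for $i\ge1$. First I would use the local cohomology triangle for $\{0\}\hookrightarrow V$ (and affineness of $V$) to get $H^i(\Vo,\pi^*\ms{N})\cong H^{i+1}_{\{0\}}(V,\widetilde M)$ for $i\ge1$, so that $H^i(\mathbb{P}(V),\ms{N})$ is the degree-zero part of $K:=H^{i+1}_{\{0\}}(V,\widetilde M)$. Here $K$ is again a $\Z$-graded $\H_{\bc}(V,W)$-module (it is cohomology of a sheaf of modules over $\sH_{\bc}(\Vo,W)$, whose global sections are $\H_{\bc}(V,W)$), with $\mathbf{h}$ still acting by $\omega-\rho_{\bc}+(\text{degree})$, and it is $\mathfrak{m}$-torsion for $\mathfrak{m}=\C[V]_+\subset\H_{\bc}(V,W)$. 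The key computation is the Euler-element identity: any $k\in K$ annihilated by $\mathfrak{m}$ satisfies $\mathbf{h}k=(\mathbf{z}-n)k$ (a direct consequence of the defining relations of $\H_{\bc}(V,W)$, using $\sum_i\alpha_s(y_i)x_i(\alpha_s^\vee)=2$). Hence a nonzero $\lambda$-isotypic vector in the socle $K[\mathfrak{m}]$ must sit in degree $\rho_{\bc}+c_\lambda-n-\omega=a_\lambda$; since $a_\lambda\notin\Z_{\ge0}$ for all $\lambda$, the socle --- and therefore, filtering $K=\bigcup_N K[\mathfrak{m}^N]$ and using that each $x_i$ raises degree by one, all of $K$ --- lies in strictly negative degrees. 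So $K_0=0$, which is the required vanishing.

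For conservativity, suppose $\ms{N}\ne0$. Taking a nonzero coherent $\mc{O}$-submodule and applying Serre vanishing to its twists shows $M_j=\Gamma(\mathbb{P}(V),\ms{N}(j))\ne0$ for $j\gg0$. Now I would argue $M_j\ne0\Rightarrow M_{j-1}\ne0$ for every $j\ge1$: if $M_{j-1}=0$, the action of $V$ (which lowers degree) kills $M_j$, so by the complementary Euler-element identity $\mathbf{h}$ acts on $M_j$ by $-\mathbf{z}_0$; a nonzero $\lambda$-isotypic vector then forces $j=\rho_{\bc}-d_\lambda-\omega=b_\lambda\in\Z_{>0}$, contrary to hypothesis. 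Descending from $M_{j_0}\ne0$ (some $j_0\gg0$) gives $M_0=\Gamma(\mathbb{P}(V),\ms{N})\ne0$. Finally, exactness of $\Gamma$ together with $\Gamma(\sH_{\omega,\bc}(\mathbb{P}(V),W))=\H_{\omega,\bc}(\mathbb{P}(V),W)$ yields $\Gamma\circ L\cong\mathrm{id}$ for the localization functor $L=\sH_{\omega,\bc}(\mathbb{P}(V),W)\otimes_{\H_{\omega,\bc}(\mathbb{P}(V),W)}(-)$, and then exactness plus conservativity force the counit $L\Gamma\to\mathrm{id}$ to be an isomorphism, so $\Gamma$ is an equivalence.

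I expect the main obstacle to be not any single step but the normalization bookkeeping that pins the two obstruction families down to exactly $a_\lambda$ and $b_\lambda$: one must track the identification of $\mathbf{h}$ with the moment map for the dilation action and the shift $\beta(\chi)=\omega$ through Theorem~\ref{thm:monodromicequiv} and Lemma~\ref{lem:globalsections}, verify carefully that $\Gamma$ of a twist corresponds to the correct graded piece, and recognise the socles of $\mathfrak{m}$-torsion modules (resp. the bottom graded pieces of non-negatively graded modules) in the monodromic category as sums of the standard (resp. costandard) modules of $\H_{\bc}(V,W)$, suitably graded. The remaining ingredients --- the monodromic equivalence, affineness of $\pi$, the local cohomology triangle, and Serre vanishing --- are standard once this dictionary is in place.
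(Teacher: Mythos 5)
Your proposal is correct and follows essentially the same route as the paper: pull back along the principal $\Cs$-bundle $\pi:\Vo\to\mathbb{P}(V)$ via the monodromic equivalence, identify higher cohomology with $\mathfrak{m}$-torsion (equivalently, $\mathbb{R}^{i}j_{\idot}$-) contributions killed by the Euler-element identity giving $a_\lambda$, and prove conservativity by descending through the grading using the complementary identity giving $b_\lambda$. The only differences are cosmetic: the paper computes the forbidden $\mathbf{h}$-eigenvalues on modules supported at $\{0\}$ via the weights of category $\mc{O}_-$ (Verma flags) rather than your direct socle argument on local cohomology, and for conservativity it starts from an arbitrary nonzero weight piece and also treats negative degrees by a support argument with the $x$'s, instead of your Serre-vanishing reduction to large positive degree.
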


Our proof of Theorem \ref{thm:affine} follows that of \cite[Theorem 1.6.5]{HTT}.  


\begin{proof}
The category of finitely generated $\H_c(V,W)$-modules that are supported on $\{ 0 \} \subset V$ is denoted $\mc{O}_-$. It is category $\mc{O}$ for the rational Cherednik algebra as studied in \cite{GGOR}. We use basic results from \textit{loc. cit.} without reference. The element $\mathbf{h}$ acts locally finitely on modules in $\mc{O}_-$. The generalized eigenvalues of $\mathbf{h}$ on $M \in \mc{O}_-$ are the \textit{weights} of $M$. Let $\Delta(\lambda)$, for $\lambda \in \Irr W$, denote the Verma modules in $\mc{O}_-$. It is isomorphic to $(\mathrm{Sym} V) \o \lambda$ as a $\mathrm{Sym} V \rtimes (\C W \o \C[\mathbf{h}])$-module. The weights of $\Delta(\lambda)$ are $c_{\lambda} - n - \Z_{\ge 0}$. If $M \in \mc{O}_-$, then there exists a projective module $P \in \mc{O}_-$ and surjection $P \twoheadrightarrow M$. The fact that the module $P$ has a Verma flag implies that the weights of $M$ are contained in $\bigcup_{\lambda \in \Irr W} c_{\lambda} - n - \Z_{\ge 0}$. Therefore, zero is not a generalized eigenvalue of $\mathbf{h} + \rho_{\bc} - \omega$ on $M$ provided $c_{\lambda} + \rho_{\bc} - r - \omega - n \neq 0$ for all $r \in \Z_{\ge 0}$ i.e provided $a_{\lambda} \notin \Z_{\ge 0}$. 

Let $0 \rightarrow \mm_1 \rightarrow \mm_2 \rightarrow \mm_3 \rightarrow 0$ be a short exact sequence in $\Lmod{\sH_{\omega,\bc}(\mathbb{P}(V),W)}$. By Theorem \ref{thm:monodromicequiv}, the terms of the sequence $0 \rightarrow \pi^* \mm_1 \rightarrow \pi^* \mm_2 \rightarrow \pi^* \mm_3 \rightarrow 0$ belong to $\Lmod{(\sH_{\bc}(\Vo, W),T,\omega)}$. Moreover, the sequence is exact because $\pi$ is smooth. Let $j : \Vo \hookrightarrow V$. As noted in Lemma \ref{lem:opendirect}, the sheaves $\mathbb{R}^i j_{\idot} (\pi^* \mm_k)$ for $i \ge 0$ and $k = 1,2,3$ are $\H_{\omega,\bc}(V,W)$-modules. The modules $\mathbb{R}^i j_{\idot} (\pi^* \mm_k)$ are supported on $\{ 0 \}$ for all $i > 0$. Therefore, they belong to the ind-category $\mathrm{Ind} \ \mc{O}_-$. The global sections $\Gamma(\mathbb{P}(V),\mm_k)$ are the element of the $ \Gamma(V,j_{\idot} \pi^* \mm_k)^T$. Therefore the long exact sequence 
$$
0 \rightarrow \Gamma(V, j_{\idot} \pi^* \mm_1) \rightarrow \Gamma(V, j_{\idot} \pi^* \mm_2) \rightarrow \Gamma(V, j_{\idot} \pi^* \mm_3) \rightarrow \Gamma(V,\mathbb{R}^1 j_{\idot}( \pi^* \mm_1)) \rightarrow \cdots 
$$
gives rise to 
$$
0 \rightarrow \Gamma(\mathbb{P}(V),\mm_1) \rightarrow \Gamma(\mathbb{P}(V), \mm_2) \rightarrow \Gamma(\mathbb{P}(V), \mm_3) \rightarrow \Gamma(V,\mathbb{R}^1 j_{\idot}(\pi^* \mm_1))^T \rightarrow \cdots 
$$
The space $\Gamma(V,\mathbb{R}^1 j_{\idot}(\pi^* \mm_1))^T$ can be identified with the space of generalized $\mathbf{h}$-eigenvectors in $\Gamma(V,\mathbb{R}^1 j_{\idot} (\pi^* \mm_1))$ with eigenvalue $\omega - \rho_{\bc}$. But if $a_{\lambda} \notin \Z_{\ge 0}$ for all $\lambda$, then this space is necessarily zero. Hence the sequence $0 \rightarrow \Gamma(\mathbb{P}(V),\mm_1) \rightarrow \Gamma(\mathbb{P}(V),\mm_2) \rightarrow \Gamma(\mathbb{P}(V),\mm_3) \rightarrow 0$ is exact. 

Next we need to show if $b_{\lambda} \notin \Z_{>0}$ for all $\lambda \in \Irr W$ then $\Gamma$ is conservative i.e. $\Gamma (\mathbb{P}(V), \ms{M}) = 0$ implies that $\ms{M} = 0$. Assume that $\ms{M} \neq 0$. Since $\pi$ is smooth and surjective, it is faithfully flat and $\pi^* \ms{M} = 0$ implies that $\ms{M} = 0$. Hence $\pi^* \ms{M} \neq 0$. Since $\pi^* \mm$ is $(T,\omega)$-monodromic, the Euler element $\mathbf{h}$ acts semi-simply on $\Gamma(V, j_{\idot} \pi^* \ms{M})$, hence it decomposes as
$$
\Gamma(V, j_{\idot} \pi^* \ms{M}) = \bigoplus_{\alpha \in \Z} \Gamma(V, j_{\idot} \pi^* \ms{M})_{\alpha + \omega - \rho_{\bc}}. 
$$
There is some $\alpha \in \Z$ for which $\Gamma(V, j_{\idot} \pi^* \ms{M})_{\alpha + \omega - \rho_{\bc}} \neq 0$. We first assume that $\alpha > 0$. Choose $0 \neq m \in \Gamma(V,j_{\idot} \pi^* \ms{M})_{\alpha + \omega - \rho_{\bc}}$. Since the space $\Gamma(V, j_{\idot} \pi^* \ms{M})_{\alpha + \omega - \rho_{\bc}}$ is a $W$-module, we may assume that $m$ lies in some irreducible $W$-isotypic component (of type $\lambda$ say) of $\Gamma(V, j_{\idot} \pi^* \ms{M})_{\alpha + \omega - \rho_{\bc}}$. We claim that there is some $y$ such that $y \cdot m \neq 0$. Assume not, then $\mathbf{h} \cdot m = -d_\lambda m$. Hence $-d_\lambda = \alpha + \omega - \rho_{\bc}$ i.e. $b_{\lambda} = \rho_{\bc} -  d_{\lambda} - \omega = \alpha \in \Z_{> 0}$, contradicting our assumption on $b_{\lambda}$. Thus $y \cdot m \neq 0$. But $y \cdot m \in \Gamma(V, j_{\idot} \pi^* \ms{M})_{\alpha - 1 +\omega - \rho_{\bc}}$ so eventually we get a non-zero vector in $\Gamma(V, j_{\idot} \pi^* \ms{M})_{\omega - \rho_{\bc}}$ as required. Now assume that $\alpha < 0$. If $m \in \Gamma(\Vo, \pi^* \ms{M})_{\alpha + \omega - \rho_{\bc}}$ is a non-zero section, then the support of $m$ is not contained in $\{ 0 \}$. On the other hand, if $x \cdot m = 0$ for all $x \in V^*$, then $\mathrm{Supp} (m) \subset \{ 0 \}$ and hence $m = 0$. Hence $m \neq 0$ implies that there exists some $x \in V^*$ such that $x \cdot m \neq 0$. Repeating this argument, we eventually conclude that $\Gamma(\Vo, \pi^* \ms{M})_{\omega - \rho_{\bc}} \neq 0$. 
\end{proof}


When $W $ is trivial, Theorem \ref{thm:affine} says that $\mathbb{P}(V)$ is $\dd^{\omega}$-affine provided $\omega \notin \{ - n , - n - 1, \ds \}$, which equals the set of all $\omega \in \mc{A} \cup \mc{E}$ of \cite[Theorem 6.1.3]{ToricVdB}.  
 
\begin{rem}
The action of $W$ on $V$ induces an action of $W$ on all the partial flag manifolds $GL(V) / P$, where $P$ is a parabolic of $GL(V)$. However, one can check that there are reflections in $(GL(V) / P, W)$ if and only if $GL(V) / P = \mathbb{P}(V)$ or $GL(V) / P$ is the Grassmannian of codimension one subspaces in $V$. 
\end{rem}

\section{A local presentation of the Cherednik algebra}

In this section we give a local presentation of the sheaf of Cherednik algebras. 

\subsection{} In this section only, we make the following assumptions 
\begin{itemize}
\item For each $(w,Z) \in \mc{S}(X)$, there exists a globally defined function $f_Z$ such that $Z = V(f_Z)$. 
\item All Picard algebroids considered can be trivialized in the Zariski topology. 
\end{itemize}
We fix a choice of functions $f_Z$.  

\subsection{The $\KZ$-connection}

Recall that $U = X - \bigcup_{(w,Z)}  Z$, where the union is over all $(w,Z)$ in $\mc{S}(X)$. Since we have fixed a choice of defining equations of the hypersurfaces $Z$, it is possible to write down a $\KZ$-connection on $U$. 

\begin{defn}
The Knizhnik-Zamolodchikov connection on $U$, with values in $\mc{O}_{U} \o \C W$, is defined to be  
$$
\omega_{X,\bc} = \sum_{(w,Z) \in \mc{S}(X)} \frac{2 \bc(w,Z)}{1 - \lambda_{w,Z}} (d \log f_{Z}) \o s. 
$$
\end{defn}


The $\KZ$-connection behaves well under melys morphisms. 

\begin{lem}
Let $\map : Y \rightarrow X$ be a \textit{surjective} morphism, melys for $\bc$. Then, $\map^* \omega_{Y,\bc} = \omega_{X,\map^* \bc}$. 
\end{lem}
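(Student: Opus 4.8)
The plan is to verify the identity $\map^* \omega_{Y,\bc} = \omega_{X,\map^* \bc}$ by comparing the two sides term by term, using the compatibility of pull-back of cycles with pull-back of the defining functions $f_Z$, exactly as in the proof of Proposition \ref{lem:chainrule}. Wait — I should be careful with the direction of the arrows: $\map : Y \to X$ is surjective and melys, and $\omega_{Y,\bc}$ lives on (the open locus of) $Y$ while $\omega_{X,\map^*\bc}$ lives on $X$, so the statement as I want to use it must be $\map^* \omega_{X,\bc} = \omega_{Y,\map^*\bc}$. I will prove this form. Both sides are one-forms valued in $\mc{O} \o \C W$ on the open set $V = Y - E$ where $E = \map^{-1}(D)$, $D = \bigcup_{\bc(w,Z)\neq 0} Z$; since the statement is local on $X$ and $Y$, I may assume $X$ and $Y$ affine with all hypersurfaces $Z \in \mc{S}_{\bc}(X)$ cut out by globally defined functions $f_Z$, and similarly for the $Z' \in \mc{S}_{\bc}(Y)$.

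First I would expand $\map^* \omega_{X,\bc} = \sum_{(w,Z) \in \mc{S}(X)} \frac{2\bc(w,Z)}{1-\lambda_{w,Z}} (\map^* d\log f_Z) \o w$, where the sum is effectively over $\mc{S}_{\bc}(X)$. The key input is the computation already carried out in the proof of Proposition \ref{lem:chainrule}: writing $\map^{-1}(Z) = Z_1' \cup \cdots \cup Z_l'$ set-theoretically with $n_i = \langle [Z_i'], \map^*[Z]\rangle$, one has $\map^* f_Z = u \prod_i f_{Z_i'}^{n_i}$ for a unit $u$, hence
$$
\map^* d\log f_Z = d\log(\map^* f_Z) = \sum_i n_i \, d\log f_{Z_i'} + d\log u,
$$
and $d\log u$ is a regular (honest) one-form on $Y$, i.e. lies in $\Omega^1_Y$ rather than merely $\Omega^1_Y(E)$. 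Multiplying by $\frac{2\bc(w,Z)}{1-\lambda_{w,Z}}$ and using that $\lambda_{w,Z} = \lambda_{w,Z_i'}$ (each $Z_i'$ is a component of $Y^w$ by Lemma \ref{lem:irrcomp}, with the same conormal eigenvalue since $w$ acts on the normal direction the same way), I would recognize the leading term as precisely the contribution of $\map^*\bc$, by the very definition $(\map^*\bc)(w,Z') = \sum_{(w,Z)} \langle \map^*[Z],[Z']\rangle \, \bc(w,Z)$. Summing over all $(w,Z) \in \mc{S}_{\bc}(X)$ and reorganizing the double sum by the components $Z'$ of the various $\map^{-1}(Z)$, the leading terms assemble exactly into $\omega_{Y,\map^*\bc}$.

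The remaining issue is the error terms $\sum_{(w,Z)} \frac{2\bc(w,Z)}{1-\lambda_{w,Z}} (d\log u_{Z}) \o w$, which a priori are nonzero regular one-forms on $Y$ valued in $\mc{O}_Y \o \C W$. Here is where surjectivity of $\map$ enters, and I expect this to be the main obstacle: I need to argue these error terms vanish. In Proposition \ref{lem:chainrule} they were harmless because they landed in $\map^*\mc{O}_X \rtimes W$ and the Dunkl operator was only defined up to such a term; but the $\KZ$-connection is an honest, uniquely specified object, so the error terms must genuinely be zero. The cleanest route is to note that $\omega_{X,\bc}$ and $\omega_{Y,\map^*\bc}$ are both closed one-forms (being logarithmic derivatives, or by the flatness discussion) with first-order poles and prescribed residues along the respective hypersurfaces; the difference $\map^*\omega_{X,\bc} - \omega_{Y,\map^*\bc}$ is then a closed \emph{regular} one-form on $Y$ (all the poles cancel by the residue computation above) valued in $\C W$. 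Pulling back further along a section or using surjectivity to compare with the situation over the generic point, one reduces to showing a regular closed logarithmic form that pulls back from a form with the same residues must vanish; concretely, since $\map$ is surjective and $\omega_{X,\bc}$ is determined by its restriction to $U$ where it equals an actual closed form, one checks on a dense open set of $Y$ that the two sides agree, and density plus the torsion-free nature of $\Omega^1_Y$ forces equality everywhere. I would phrase this last step as: $\map$ surjective implies $\map^*$ is injective on global one-forms modulo nothing, so it suffices to verify the identity after restricting to $\map^{-1}(U) \subset V$, where both $\map^*\omega_{X,\bc}$ and $\omega_{Y,\map^*\bc}$ are computed directly from the formula and the displayed expansion of $\map^* d\log f_Z$ with all $u_Z = 1$ achievable by adjusting the local defining functions — making the error terms vanish by construction on that locus, hence everywhere by continuity.
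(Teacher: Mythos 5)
You read the statement correctly as $\map^*\omega_{X,\bc}=\omega_{Y,\map^*\bc}$ (the printed indices are a typo), and the first half of your argument --- expanding $\map^* f_Z = u_Z\prod_i f_{Z_i'}^{n_i}$, hence $\map^* d\log f_Z=\sum_i n_i\, d\log f_{Z_i'}+d\log u_Z$, and matching the polar terms against the definition of $\map^*\bc$ --- is exactly the paper's route: it is a restatement of (\ref{eq:pullbackform}). The gap is in your treatment of the leftover terms $\tfrac{2\bc(w,Z)}{1-\lambda_{w,Z}}\,d\log u_Z \o w$. Neither of your proposed fixes works. A closed \emph{regular} one-form on $Y$ need not vanish (any exact form $dg$ is a counterexample), so ``the poles cancel, hence the difference is zero'' is not an argument. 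And the reduction to $\map^{-1}(U)$ is circular: the $\KZ$-forms are defined using the globally fixed functions $f_Z$ and $f_{Z'}$, so you cannot ``adjust the local defining functions'' on that locus without changing the very objects you are comparing; moreover the discrepancy $d\log u_Z$ is a regular form, so it does not disappear upon restriction to $\map^{-1}(U)$, and density/continuity only propagates an equality you have not yet established there.

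What surjectivity actually buys --- and this is the whole content of the paper's two-line proof --- is that $\map^{-1}(Z)\neq\emptyset$, i.e.\ $\map^* f_Z$ is not a unit, for every $(w,Z)\in\mc{S}_{\bc}(X)$. Consequently each polar term on $X$ has genuine counterparts among the $f_{Z'}$ on $Y$, and the unit $u_Z$ can be absorbed into the choice of the defining equations of the components of $\map^{-1}(Z)$; equivalently, the correction term $h$ in (\ref{eq:pullbackform}) \emph{can be chosen to be zero}, after which the identity holds on the nose (recall the $\KZ$-form is only canonical once such a choice of the $f$'s is fixed, so the lemma is really an assertion that compatible choices exist). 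If $\map$ were not surjective --- say $\map(Y)\subset U$ --- then $\map^* f_Z$ would be a unit whose $d\log$ is a possibly nonzero regular form with no counterpart in $\omega_{Y,\map^*\bc}$, and no choice could repair this; that is precisely where your ``closed regular forms must vanish'' heuristic breaks down. So keep your coefficient computation (including the point that $\lambda_{w,Z}=\lambda_{w,Z'}$ is being used, just as in (\ref{eq:pullbackform})), but replace your final step by the choice-of-defining-functions argument: that is both where surjectivity enters and the only place it is needed.
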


\begin{proof}
The fact that $\map$ is surjective implies that $\map^* f_Z$ is not a unit for all $(w,Z) \in \mc{S}_{\bc}(X)$. Then, the lemma follows from equation \ref{eq:pullbackform}, since the term $h$ there can be chosen to be zero.  
\end{proof}

\subsection{} Fix $\omega \in \HH^2(X,\Omega^{1,2}_X)^W$, trivializable in the Zariski topology. For $(w,Z) \in \mc{S}(X)$ and $\nu_1, \nu_2 \in \mc{P}^{\omega}$, define 
$$
\Xi_{Z}^w(\nu_1,\nu_2) :=  (d \log f_Z \wedge \sigma(\nu_1)) (w(\nu_2) - \nu_2) - (d \log f_Z \wedge \sigma(\nu_2)) (w (\nu_1) - \nu_1)
$$
in $\mc{P}^{\omega}(D)$. 

\begin{lem}
Let $(w, Z) \in \mc{S}(X)$, $g \in \mc{O}_X$ and $\nu_1, \nu_2 \in \mc{P}^{\omega}$. Then, 
$$
(d \log f_Z \wedge \nu) (w(g) - g) \in \mc{O}_X \quad \textrm{and} \quad \Xi^{w}_{Z}(\nu_1,\nu_2) \in \mc{P}^{\omega}.
$$ 
\end{lem}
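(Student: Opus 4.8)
The plan is to treat both assertions locally on $X$ and, in each case, to show that an expression which a priori has a simple pole along $Z$ is in fact regular there; since $X$ is smooth, hence normal, this can be verified in a Zariski neighbourhood of each point of $X$.

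I would first prove the first assertion, as it also feeds into the second. Because $Z$ is a connected component of the fixed locus $X^w$ and $Z = V(f_Z)$ is a smooth reduced hypersurface, its ideal sheaf is $(f_Z)$. For $g \in \mc{O}_X$ the function $w(g) - g$ vanishes on $X^w$, in particular on $Z$, so locally $w(g) - g \in (f_Z)$. Since $d\log f_Z \wedge \nu = \nu(f_Z)/f_Z$ has at worst a simple pole along $Z$, the product
\[
(d\log f_Z \wedge \nu)(w(g) - g) = \nu(f_Z)\cdot\frac{w(g) - g}{f_Z}
\]
is regular, i.e. lies in $\mc{O}_X$.

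For the second assertion, note that $\Xi^w_Z(\nu_1,\nu_2)$ is a priori a section of $\mc{P}^\omega$ whose only possible pole, of order one, is along $Z$ (the sole denominator being $f_Z$). The exact sequence $0 \to \mc{O}_X \to \mc{P}^\omega \stackrel{\sigma}{\longrightarrow} \Theta_X \to 0$ of $W$-equivariant $\mc{O}_X$-modules splits $\mc{O}_X$-linearly (as $\Theta_X$ is locally projective), and averaging such a splitting over $W$ (legitimate since $|W|$ is invertible in $\C$) produces a $W$-equivariant $\mc{O}_X$-linear splitting; thus, Zariski-locally, $\mc{P}^\omega \cong \Theta_X \oplus \mc{O}_X$ with $\sigma$ the projection and $W$ acting diagonally. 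Write $\nu_i = (\theta_i, g_i)$ with $\theta_i = \sigma(\nu_i)$. Then $w(\nu_i) - \nu_i = (w(\theta_i) - \theta_i,\ w(g_i) - g_i)$, and by the first assertion each term $(d\log f_Z \wedge \theta_j)(w(g_i) - g_i)$ already lies in $\mc{O}_X$. Hence, modulo $\mc{P}^\omega$, the element $\Xi^w_Z(\nu_1,\nu_2)$ is congruent to its ``$\Theta$-part'' $(d\log f_Z \wedge \theta_1)(w(\theta_2) - \theta_2) - (d\log f_Z \wedge \theta_2)(w(\theta_1) - \theta_1)$, and it remains to show that $\theta_1(f_Z)(w(\theta_2) - \theta_2) - \theta_2(f_Z)(w(\theta_1) - \theta_1)$ lies in $I_Z \Theta_X$, i.e. vanishes along $Z$.

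This last point is a pointwise linear-algebra identity. Fix $z \in Z$. Since $w$ has finite order and $Z$ is a connected component of $X^w$ of codimension one, $\ker(dw_z - \id_{T_z X}) = (T_z X)^{dw_z} = T_z Z = \ker df_{Z,z}$, a hyperplane in $T_z X$. Moreover $(w(\theta_i) - \theta_i)(z) = (dw_z - \id)(\theta_i(z))$ and $\theta_i(f_Z)(z) = df_{Z,z}(\theta_i(z))$, so with $v_i = \theta_i(z)$ the expression at $z$ is $df_{Z,z}(v_1)(dw_z - \id)(v_2) - df_{Z,z}(v_2)(dw_z - \id)(v_1)$. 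Both $df_{Z,z}$ and $dw_z - \id$ factor through the one-dimensional quotient $T_z X / T_z Z$; writing $v_i \equiv c_i\bar u$ there, this becomes $c_1 c_2 \, df_{Z,z}(\bar u)(dw_z - \id)(\bar u) - c_2 c_1 \, df_{Z,z}(\bar u)(dw_z - \id)(\bar u) = 0$. Hence the ``$\Theta$-part'' is a genuine section of $\Theta_X \subseteq \mc{P}^\omega$, so $\Xi^w_Z(\nu_1,\nu_2) \in \mc{P}^\omega$. I expect the main obstacle to be not the (routine) simple-pole cancellations but the contribution of the extension class of $\mc{P}^\omega$ along $Z$; this is exactly what the $W$-equivariant splitting neutralises, and it is the only step that uses the finiteness of $W$ (and $\mathrm{char}\,\C = 0$).
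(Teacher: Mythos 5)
Your proof is correct, and it gets to the conclusion by a somewhat different route than the paper. Both arguments split off the $\mc{O}_X$-part of $\mc{P}^{\omega}$ locally and use the first assertion to reduce to showing that the antisymmetrized expression in $\sigma(\nu_1),\sigma(\nu_2)$ has no pole along $Z$; but where the paper invokes its standing hypothesis that $\mc{P}^{\omega}$ trivializes in the Zariski topology, chooses semi-invariant coordinates with $w(x_1)=\zeta x_1$ and $f_Z=ux_1$, and cancels the singular terms by explicit computation, you (i) manufacture the splitting yourself on a $W$-stable affine open by averaging an $\mc{O}$-linear splitting over $W$, and (ii) verify the cancellation pointwise along $Z$, using $\ker(dw_z-\id)=T_zZ=\ker df_{Z,z}$ so that both $df_{Z,z}$ and $dw_z-\id$ factor through the one-dimensional normal space and the antisymmetrized product vanishes; reducedness of $Z$ and local freeness of $\Theta_X$ then upgrade pointwise vanishing to membership in $f_Z\Theta_X$, as you need. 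Your version buys two things: it does not actually use the section's trivialization assumption on the Picard algebroid (only the local $\mc{O}$-module splitting, which exists for free on $W$-stable affine opens in the equivariant Zariski topology), and it makes explicit a point the paper's computation uses silently, namely that the splitting may be taken compatible with the $w$-action --- without that, $w(\nu_i)-\nu_i$ picks up an extra $\mc{O}_X$-component which need not vanish along $Z$, and the displayed cancellation would be incomplete; your averaging step is exactly what rules this out. What the paper's coordinate computation buys in exchange is a concrete normal form ($w(x_1)=\zeta x_1$, $f_Z=ux_1$) that is reused in the same style for the curvature calculation in the presentation result that follows, whereas your pointwise normal-space argument is coordinate-free and marginally more general.
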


\begin{proof}
If $g \in \mc{O}_X$ and $\nu \in \mc{P}^{\omega}$ then $(d \log f_Z \wedge \sigma(\nu)) (w(g) - g) \in \mc{O}_X$ because $w(g) - g \in I(Z)$. The second claim is that 
$$
\frac{\sigma(\nu_1)(f_Z)}{f_Z}(w(\nu_2) - \nu_2) - \frac{\sigma(\nu_2)(f_Z)}{f_Z} (w (\nu_1) - \nu_1) \in \mc{P}^{\omega}. 
$$
The statement is local and clearly true in a neighborhood of any point of $X - Z$. Therefore, we may assume that we have fixed a point $x \in Z$. Choose a small, affine $w$-stable open subset $U$ of $X$ with coordinate system $x_1, \ds, x_n$ such that $w (x_1) = \zeta x_1$ and $w(x_i ) = x_i $ for $i \neq 1$. Moreover, since we have assumed that the Picard algebroid $\mc{P}^{\omega}$ trivializes in the Zariski topology, we may assume that $\mc{P}^{\omega} |_U = \mc{O}_U \oplus \Theta_U$. There exists some unit $u \in \Gamma(U,\mc{O}_X)$ such that $f_Z = u x_1$. The statement is clear if either of $\nu_1$ or $\nu_2$ is in $\Gamma(U,\mc{O}_X)$. Thus, without loss of generality, $\nu_1,\nu_2 \in \Gamma(U,\Theta_X)$. Expanding, 
$$
\Xi^{w}_{Z}(\nu_1,\nu_2) = \frac{\nu_1(x_1)}{x_1}(w(\nu_2) - \nu_2) - \frac{\nu_2(x_1)}{x_1} (w (\nu_1) - \nu_1) + h 
$$
for some $h \in \Gamma(U,\Theta_X)$. There are $f_i, g_i \in \Gamma(U,\mc{O}_X)$ such that $\nu_1 = \sum_{i = 1}^n f_i \frac{\pa}{\pa x_i}$ and $\nu_2 = \sum_{i = 1}^n g_i \frac{\pa}{\pa x_i}$. We have 
\begin{align*}
\frac{\nu_1(x_1)}{x_1}(w(\nu_2) - \nu_2) = & \sum_{i,j =1}^n f_i x_1^{-1} \frac{\pa x_1}{\pa x_i} \left( w(g_j) \frac{\pa}{\pa w(x_j)} - g_j \frac{\pa}{\pa x_j} \right) \\
 & = \sum_{j = 1}^n f_1 x_1^{-1}  \left( w(g_j) \frac{\pa}{\pa w(x_j)} - g_j \frac{\pa}{\pa x_j} \right) \\
 & = \sum_{j = 1}^n f_1 x_1^{-1}  \left( ( w(g_j) - g_j) \frac{\pa}{\pa w(x_j)} + g_j \left( \frac{\pa}{\pa w(x_j)} - \frac{\pa}{\pa x_j} \right) \right) \\
 & = f_1 g_1 x_1^{-1} \zeta \frac{\pa}{\pa x_1} + \sum_{j = 1}^n f_1 x_1^{-1}  \left( ( w(g_j) - g_j) \frac{\pa}{\pa w(x_j)} \right).
\end{align*}
Thus, if we define $h_1$ to be $\sum_{j = 1}^n f_1 x_1^{-1}  \left( ( w(g_j) - g_j) \frac{\pa}{\pa w(x_j)} \right)$, which belongs to $\Gamma(U,\mc{P}^{\omega})$!, we have 
\begin{align*}
\frac{\nu_1(x_1)}{x_1}(w(\nu_2) - \nu_2) - \frac{\nu_2(x_1)}{x_1}(w(\nu_1) - \nu_1) & = f_1 g_1 x_1^{-1} \zeta \frac{\pa}{\pa x_1} + h_1 - f_1 g_1 x_1^{-1} \zeta \frac{\pa}{\pa x_1} - h_2, \\
 & = h_1 - h_2,
\end{align*}
which belongs to $\Gamma(U,\Theta_X)$. 
\end{proof}

\subsection{} We define the sheaf of algebras $\mc{U}_{\omega,\bc}(X,W)$ to be the quotient of $T \mc{P}^{\omega} \rtimes W$ by the relations
\begin{align}
\nu \o g - g \o \nu & = \sigma(\nu)(g) + \sum_{(w,Z)} \frac{2 \bc (w,Z)}{1 - \lambda_{w,Z}} (d \log f_Z \wedge \sigma(\nu)) (w(g) - g) w, \label{eq:Dcom}\\
\nu_1 \o \nu_2 - \nu_2 \o \nu_1 & = [\nu_1, \nu_2] +  \sum_{(w,Z)} \frac{2 \bc (w,Z)}{1 - \lambda_{w,Z}} \ \Xi_{Z}^w(\nu_1, \nu_2) w, \label{eq:fieldcom}
\end{align}
for all $\nu, \nu_1, \nu_2 \in \mc{P}^{\omega}_X$ and $g \in \mc{O}_X$, and the relation\footnote{Recall from definition \ref{defn:Picard} that $1_{\mc{P}}$ is defined to be the image of $1 \in \mc{O}_X$ under the map $i : \mc{O}_X \rightarrow \mc{P}$.} $1_{\mc{P}} = 1$. 

\begin{rem}
When $X = V$ is a vector space and $\nu_1, \nu_2 \in V$ are constant coefficient vector fields, the right-hand side of (\ref{eq:fieldcom}) is zero and we get the usual relations of the rational Cherednik algebra. 
\end{rem}

\begin{prop}\label{prop:presentation}
The map $\nu \mapsto D_{\nu}$, $w \mapsto w$ for $\nu \in \mc{P}^{\omega}$ and $w \in W$ defines an isomorphism $\mc{U}_{\omega,\bc}(X,W) \iso \sH_{\omega,\bc}(X,W)$ if and only if the $\KZ$-connection is flat.
\end{prop}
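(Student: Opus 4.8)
The plan is to prove Proposition \ref{prop:presentation} by comparing presentations. First I would observe that the map $\nu \mapsto D_\nu$, $w \mapsto w$, $g \mapsto g$ always defines a morphism of $\mc{O}_X \rtimes W$-modules from $T\mc{P}^\omega \rtimes W$ into $j_{\idot}(\dd^\omega_{X-D} \rtimes W)$, and hence an algebra homomorphism from the free algebra, whose image is precisely $\sH_{\omega,\bc}(X,W)$ by the definition of the latter as the subalgebra generated by $\mc{F}^1_{\omega,\bc}(X,W)$. The content is then (i) that relations (\ref{eq:Dcom}) and (\ref{eq:fieldcom}) and $1_{\mc{P}} = 1$ are always satisfied by the $D_\nu$, so the map factors through $\mc{U}_{\omega,\bc}(X,W)$, and (ii) that the resulting surjection $\mc{U}_{\omega,\bc}(X,W) \onto \sH_{\omega,\bc}(X,W)$ is injective exactly when the $\KZ$-connection is flat.

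For (i): relation (\ref{eq:Dcom}) is a direct computation of $[D_\nu, g]$ in $j_{\idot}(\dd^\omega_{X-D}\rtimes W)$ — since $D_\nu = \nu + \sum (d\log f_Z \wedge \sigma(\nu))(w-1)\cdot(\text{coeff})w$ and $g$ commutes with $W$ only up to $w(g)-g$, one gets exactly the stated right-hand side, using the preceding lemma that $(d\log f_Z\wedge\sigma(\nu))(w(g)-g)\in\mc{O}_X$ so the commutator lies in $\mc{F}^1_{\omega,\bc}(X,W)$. Relation (\ref{eq:fieldcom}) is the assertion that $[D_{\nu_1},D_{\nu_2}]$ equals $D_{[\nu_1,\nu_2] + \sum \frac{2\bc}{1-\lambda}\Xi_Z^w(\nu_1,\nu_2)w}$ — but here one must be careful: the claim quoted after (\ref{eq:ses}), citing the remark after \cite[Theorem 2.11]{ChereSheaf}, is only that $[D_{\nu_1},D_{\nu_2}]\in\mc{F}^1_{\omega,\bc}(X,W)$, and the precise identity with the $\Xi$-term is exactly where the flatness of $\KZ$ enters. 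So I would compute $[D_{\nu_1},D_{\nu_2}]$ directly, in a formal or Zariski-local neighbourhood where $\mc{P}^\omega$ trivializes; the commutator splits into a ``naive'' part $[\nu_1,\nu_2]$ plus $\mc{O}_X$-linear-in-$W$ correction terms, and the correction terms organize into the $\Xi_Z^w$ contribution plus a residual term which is precisely a component of the curvature $d\omega_{X,\bc} + \omega_{X,\bc}\wedge\omega_{X,\bc}$ of the $\KZ$-connection (after restricting to $U$, where $\dd^\omega_U\rtimes W$ is honest differential operators). The upshot: $\mc{U}_{\omega,\bc}(X,W)\to\sH_{\omega,\bc}(X,W)$ is well-defined as an algebra map precisely when that residual curvature term vanishes, i.e. when $\KZ$ is flat. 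Conversely, if $\KZ$ is not flat, then in $\sH_{\omega,\bc}(X,W)$ one has $[D_{\nu_1},D_{\nu_2}] - D_{[\nu_1,\nu_2]+\sum\frac{2\bc}{1-\lambda}\Xi_Z^w w}\neq 0$ for a suitable choice of $\nu_1,\nu_2$, so relation (\ref{eq:fieldcom}) genuinely fails and the map cannot be an isomorphism.

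When $\KZ$ is flat, injectivity of the surjection $\mc{U}_{\omega,\bc}(X,W)\onto\sH_{\omega,\bc}(X,W)$ I would get via associated graded / PBW, exactly as in the classical case. Put the order filtration on $\mc{U}_{\omega,\bc}(X,W)$ by assigning degree $\le 1$ to $\mc{P}^\omega$ and degree $0$ to $\mc{O}_X\rtimes W$; relations (\ref{eq:Dcom}) and (\ref{eq:fieldcom}) show that the associated graded is a quotient of $\gr$ of the free algebra, namely a quotient of $\pi_{\idot}\mc{O}_{T^*X}\rtimes W$ (the $\mc{P}^\omega$-symbols being the fibre coordinates, with $1_{\mc{P}}=1$ killing the redundant generator). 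Since the map to $\sH_{\omega,\bc}(X,W)$ respects filtrations and, by Theorem \ref{thm:PBW}, $\gr_{\F}\sH_{\omega,\bc}(X,W)\simeq\pi_{\idot}\mc{O}_{T^*X}\rtimes W$, the induced graded map is a surjection from a quotient of $\pi_{\idot}\mc{O}_{T^*X}\rtimes W$ onto $\pi_{\idot}\mc{O}_{T^*X}\rtimes W$; since $\pi_{\idot}\mc{O}_{T^*X}\rtimes W$ is (locally) Noetherian such a surjection is an isomorphism, so $\gr$ of our map is an isomorphism, hence so is the map itself by a standard filtered-ring argument.

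The main obstacle is the precise bookkeeping in step (i) for relation (\ref{eq:fieldcom}): one has to carry out the commutator $[D_{\nu_1},D_{\nu_2}]$ with all the group-algebra-valued coefficients, keep track of how $w$ acts on the rational functions $\sigma(\nu_i)(f_Z)/f_Z$, and recognize that after subtracting the $[\nu_1,\nu_2]$ term and the $\Xi_Z^w$ terms what remains is exactly (a matrix coefficient of) the $\KZ$-curvature $2$-form paired with $\sigma(\nu_1)\wedge\sigma(\nu_2)$. This identification — that the ``discrepancy'' in (\ref{eq:fieldcom}) is literally the curvature of $\omega_{X,\bc}$, with no further hidden terms — is the heart of the statement and the one place where a genuine, if routine, calculation is unavoidable; everything downstream (factoring the map, the $\gr$ argument for the ``if'' direction, and the non-vanishing argument for the ``only if'' direction) is formal once that identity is in hand.
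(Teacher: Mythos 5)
Your proposal is correct and follows essentially the same route as the paper: in both, the heart is the direct computation of $[D_{\nu_1},D_{\nu_2}]$, checking that relation (\ref{eq:Dcom}) always holds and that the discrepancy in (\ref{eq:fieldcom}) for the Dunkl operators is exactly the curvature of the $\KZ$-connection (which is just $\omega_{X,\bc}\wedge\omega_{X,\bc}$, since $d\omega_{X,\bc}=0$ automatically), so (\ref{eq:fieldcom}) holds iff $\KZ$ is flat. The only difference is that you make explicit the injectivity of the resulting surjection $\mc{U}_{\omega,\bc}(X,W)\onto\sH_{\omega,\bc}(X,W)$ via the order filtration and Theorem \ref{thm:PBW}, a step the paper's proof leaves implicit.
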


\begin{proof}
The proof is a direct calculation. It is straight-forward to see that relation (\ref{eq:Dcom}) always holds in $\sH_{\omega,\bc}(X,W)$. Therefore, we just need to check that relation (\ref{eq:fieldcom}) holds for Dunkl operators in $\sH_{\omega,\bc}(X,W)$ if and only if the $\KZ$-connection is flat. Let $\nu_1, \nu_2 \in \mc{P}^{\omega}_X$ and $D_{\nu_1}, D_{\nu_2}$ the corresponding Dunkl operators. We need to calculate the right hand side of 
$$
[D_{\nu_1}, D_{\nu_2}] = \left[\nu_1 + \sum_{(w,Z)} \frac{2 \bc(w,Z)}{1 - \lambda_{w,Z}} \frac{\sigma(\nu_1)(f_Z)}{f_Z} (w - 1), \nu_2 + \sum_{(w,Z)} \frac{2 \bc(w,Z)}{1 - \lambda_{w,Z}} \frac{\sigma(\nu_2)(f_Z)}{f_Z} (w - 1) \right]. 
$$
We have 
\begin{align*}
\left[\frac{\sigma(\nu_1)(f_Z)}{f_Z}(w - 1), \nu_2\right] = & \frac{\sigma(\nu_2) \circ \sigma(\nu_1) (f_Z)}{f_Z} (w - 1) - \frac{\sigma(\nu_1)(f_Z) \sigma(\nu_2)(f_Z)}{f_Z^2} (w - 1) \\
 & + \frac{\sigma(\nu_1)(f_Z)}{f_Z} (w(\nu_2) - \nu_2) w, 
\end{align*}
and hence $\sum_{(w,Z)} \frac{2 \bc(w,Z)}{1 - \lambda_{w,Z}} \left( \left[ \frac{\nu_1(f_Z)}{f_Z}(w - 1), \nu_2\right] + \left[\nu_1, \frac{\nu_2(f_Z)}{f_Z}(w - 1)\right]\right)$ equals 
$$
\sum_{(w,Z)} \frac{2 \bc (w,Z)}{1 - \lambda_{w,Z}} \left( \frac{[\nu_1, \nu_2](f_Z)}{f_Z} (w - 1) + \frac{\nu_1(f_Z)}{f_Z}(w(\nu_2) - \nu_2) w  - \frac{\nu_2(f_Z)}{f_Z} (w (\nu_1) - \nu_1) w \right). 
$$
Also, $\left[ -\frac{\nu_1(f_Z)}{f_Z} w_1 , \frac{\nu_2(f_{Z'})}{f_{Z'}}\right] + \left[ \frac{\nu_1(f_Z)}{f_Z} , -\frac{\nu_2(f_{Z'})}{f_{Z'}} w_2 \right]  + \left[ \frac{\nu_1(f_Z)}{f_Z} w_1 , \frac{\nu_2(f_{Z'})}{f_{Z'}} w_2\right]$ equals  
\begin{align*}
& -\frac{\nu_1(f_Z)}{f_Z} w_1 \left(\frac{\nu_2(f_{Z'})}{f_{Z'}}\right) (w_1 - 1) +  \frac{\nu_2(f_{Z'})}{f_{Z'}}   w_2 \left(\frac{\nu_1(f_{Z})}{f_{Z}}\right) (w_2 - 1)\\
& + \frac{\nu_1(f_Z)}{f_Z} w_1 \left(\frac{\nu_2(f_{Z'})}{f_{Z'}}\right)  w_1 w_2 - \frac{\nu_2(f_{Z'})}{f_{Z'}}   w_2 \left(\frac{\nu_1(f_{Z})}{f_{Z}}\right) w_2 w_1. 
\end{align*} 
Combining the above equations, one sees that relation (\ref{eq:fieldcom}) holds for Dunkl operators in $\sH_{\omega,\bc}(X,W)$ if and only if  
$$
\sum_{(w_1,Z),(w_2,Z')} \frac{4 \bc (w_1,Z) \bc(w_2,Z')}{(1 - \lambda_{w_1,Z})(1 - \lambda_{w_2,Z'})} \left( \frac{\nu_2(f_{Z})}{f_{Z}} \frac{\nu_1(f_{Z'})}{f_{Z'}} - \frac{\nu_1(f_{Z})}{f_{Z}} \frac{\nu_2(f_{Z'})}{f_{Z'}} \right) w_1 w_2 = 0.
$$
Since the left hand side equals  
$$
\left( \sum_{(w_1,Z),(w_2,Z')} \frac{4 \bc (w_1,Z) \bc(w_2,Z')}{(1 - \lambda_{w_1,Z})(1 - \lambda_{w_2,Z'})} (d \log f_Z \wedge d \log f_{Z'}) \o w_1 w_2 \right) (\nu_2, \nu_1),
$$
it will be zero for all $\nu_1, \nu_2$ if and only if the meromorphic two-form inside the bracket is zero. But this two-form is the curvature $\omega_{X,\bc} \wedge \omega_{X,\bc}$ of the $\KZ$-connection. 
\end{proof}

Proposition \ref{prop:presentation} implies that, when the $\KZ$-connection is flat, the algebra $\mc{U}_{\omega,\bc}(X,W)$ is, up to isomorphism, independent of the choice of functions $f_Z$. 

\section{Appendix: TDOs}

In the appendix we summarize the facts we need about twisted differential operators, following \cite{BBJantzen} and \cite{KasAsterisque}.

\subsection{Twisted differential operators}

It is most natural to realize sheaves of twisted differential operators as a quotient of the enveloping algebra of a Picard algebroid. 

\begin{defn}\label{defn:Picard}
The $\mc{O}_X$-module $\ms{L}$ is called a \textit{Lie algebroid} if there exists a bracket $[ - , - ] : \ms{L} \o_{\C_X} \ms{L} \rightarrow \ms{L}$ and morphism of $\mc{O}_X$-modules $\sigma : \ms{L} \rightarrow \Theta_X$ (the anchor map) such that $(\ms{L},[ - , - ])$ is a sheaf of Lie algebras with the anchor map being a morphism of Lie algebras and, for $l_1, l_2 \in \ms{L}$ and $f \in \mc{O}_X$,  
$$
[l_1 , f l_2 ] = f [l_1, l_2] + \sigma(l_1)(f) l_2. 
$$
If, moreover, there exists a map $i : \mc{O}_X \rightarrow \ms{L}$ of $\mc{O}_X$-modules such that the sequence 
$$
0 \rightarrow \mc{O}_X \rightarrow \ms{L} \rightarrow \Theta_X \rightarrow 0 
$$
is exact and $i(1) := 1_{\ms{L}}$ is central in $\ms{L}$, then $\ms{L}$ is called a \textit{Picard algebroid}. 
\end{defn}

As in \cite{BBJantzen}, we denote by $\Omega_X^{1,2}$ the two term subcomplex $\Omega_X^1 \stackrel{d}{\longrightarrow} (\Omega_X^2)^{\mathrm{cl}}$, concentrated in degrees $1$ and $2$, of the algebraic de-Rham complex of $X$.

\begin{prop}
The Picard algebroids on $X$ are parameterized up to isomorphism by $\HH^2(X,\Omega^{1,2}_X)$. 
\end{prop}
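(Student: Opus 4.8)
The plan is to identify the group of isomorphism classes of Picard algebroids with $\HH^2(X,\Omega^{1,2}_X)$ by the standard \v{C}ech cocycle description, following \cite{BBJantzen, KasAsterisque}. First I would record the Picard-groupoid structure: given Picard algebroids $\ms{L}_1,\ms{L}_2$, their Baer sum $\ms{L}_1+\ms{L}_2 := (\ms{L}_1\times_{\Theta_X}\ms{L}_2)/\mc{O}_X$ (quotient by the anti-diagonal copy of $\mc{O}_X$, with induced anchor and bracket) is again a Picard algebroid, this operation is commutative and associative up to canonical isomorphism, and the sheaf $\mc{O}_X\oplus\Theta_X$ with bracket $[(f,\xi),(g,\eta)]=(\xi(g)-\eta(f),[\xi,\eta])$ and the obvious anchor is a unit. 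Hence isomorphism classes of Picard algebroids form an abelian group $\mathrm{PicAlg}(X)$, and the task is to produce a natural isomorphism $\mathrm{PicAlg}(X)\cong\HH^2(X,\Omega^{1,2}_X)$.

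Next comes the local analysis. Since $\Theta_X$ is locally free and the defining sequence $0\to\mc{O}_X\to\ms{L}\to\Theta_X\to 0$ is an exact sequence of $\mc{O}_X$-modules, on a suitable affine open cover $\mathfrak{U}=\{U_i\}$ one may choose $\mc{O}$-linear splittings $\tau_i:\Theta_{U_i}\to\ms{L}|_{U_i}$ of the anchor. The failure of $\tau_i$ to respect brackets is $\mc{O}$-bilinear and alternating, so $[\tau_i\xi,\tau_i\eta]-\tau_i[\xi,\eta]=\theta_i(\xi,\eta)\cdot 1_{\ms{L}}$ for a unique $\theta_i\in\Gamma(U_i,\Omega^2_X)$, and the Jacobi identity in $\ms{L}$ forces $\theta_i$ to be closed. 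On an overlap $U_{ij}$ the two splittings differ by an $\mc{O}$-linear map $\Theta\to\mc{O}\cdot 1_{\ms{L}}$, i.e. $\tau_i-\tau_j=\alpha_{ij}\in\Gamma(U_{ij},\Omega^1_X)$; comparing the bracket-defects on $U_{ij}$ gives $d\alpha_{ij}=\theta_j-\theta_i$, and on triple overlaps the telescoping identity $(\tau_i-\tau_j)+(\tau_j-\tau_k)+(\tau_k-\tau_i)=0$ gives $\alpha_{ij}-\alpha_{ik}+\alpha_{jk}=0$. Thus $(\{\alpha_{ij}\},\{\theta_i\})$ is a \v{C}ech hypercocycle for $\Omega^{1,2}_X$. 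Replacing $\tau_i$ by $\tau_i+\beta_i$ with $\beta_i\in\Gamma(U_i,\Omega^1_X)$ alters it by the coboundary of $\{\beta_i\}$, and an isomorphism $\ms{L}\to\ms{L}'$ transports compatible choices of splittings into one another; passing to the colimit over refinements, this yields a well-defined map $c:\mathrm{PicAlg}(X)\to\HH^2(X,\Omega^{1,2}_X)$, which one checks is a group homomorphism by computing the cocycle of a Baer sum from the sum of the chosen splittings.

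Finally I would prove $c$ is bijective. For injectivity: if $c(\ms{L})=0$ then, after refining $\mathfrak{U}$ and adjusting the $\tau_i$ by $1$-forms, we may take $\alpha_{ij}=0$ and $\theta_i=0$; the $\tau_i$ then agree on overlaps and glue to a global splitting of the anchor that is a morphism of Lie algebroids, giving an isomorphism of $\ms{L}$ with the unit algebroid. For surjectivity: given a cocycle $(\{\alpha_{ij}\},\{\theta_i\})$ on $\mathfrak{U}$, glue the sheaves $\mc{O}_{U_i}\oplus\Theta_{U_i}$, each with the $\theta_i$-twisted bracket $[(f,\xi),(g,\eta)]=(\xi(g)-\eta(f)+\theta_i(\xi,\eta),[\xi,\eta])$, along $U_{ij}$ via $(f,\xi)\mapsto(f+\alpha_{ij}(\xi),\xi)$; the relation $d\alpha_{ij}=\theta_j-\theta_i$ is precisely what makes these gluing maps morphisms of Lie algebroids, the \v{C}ech identity makes them satisfy the triple-overlap descent condition, and the resulting sheaf is a Picard algebroid with $c$-class the given one. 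The part requiring the most care is this last verification — that the transition maps are bracket-preserving and that descent holds on triple overlaps — since it is exactly where the shape of $\Omega^{1,2}_X$ (placement in degrees $1$ and $2$, with the de Rham differential as the connecting map) is pinned down; the rest of the argument is formal homological bookkeeping.
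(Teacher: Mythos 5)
Your argument is correct, but note that the paper itself does not prove this statement at all: it appears in the appendix, which merely summarizes the classification of Picard algebroids from Be\u{\i}linson--Bernstein and Kashiwara, so the intended ``proof'' is a citation. What you have written is essentially the standard argument from those references: local $\mc{O}$-linear splittings $\tau_i$ of the anchor (which exist on an affine cover since $\Theta_X$ is locally free and the extension is one of quasi-coherent sheaves), the closed $2$-form defect $\theta_i$, the $1$-forms $\alpha_{ij}=\tau_i-\tau_j$, the resulting \v{C}ech hypercocycle for $\Omega^{1,2}_X$ (in degrees $1$ and $2$, so the class lives in $\HH^2$), independence of choices up to coboundary, and the inverse construction by gluing the twisted trivial algebroids $\mc{O}_{U_i}\oplus\Theta_{U_i}$. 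Two small points to tidy: with your convention $\alpha_{ij}=\tau_i-\tau_j$ the computation of the defect difference gives $d\alpha_{ij}=\theta_i-\theta_j$ rather than $\theta_j-\theta_i$, so make sure the sign you use in the cocycle condition matches the one you need for the gluing maps in the surjectivity step to be bracket-preserving (it is only a matter of consistent conventions, not a genuine obstruction); and it is worth saying explicitly that \v{C}ech hypercohomology over affine covers computes $\HH^2(X,\Omega^{1,2}_X)$ because $X$ is separated and the complex consists of quasi-coherent sheaves, which is what justifies passing to the colimit over refinements. With those remarks your proof is a complete and faithful account of the classification the paper quotes, and the Baer-sum group structure you include is extra information beyond the bijection asserted in the proposition.
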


Given $\omega \in \HH^2(X,\Omega_X^{1,2})$, the corresponding Picard algebroid is denoted $\mc{P}^{\omega}_X$. Associated to $\mc{P}^{\omega}_X$ is $\dd_{X}^{\omega}$, the sheaf of differential operators on $X$ with twist $\omega$. It is the quotient of the enveloping algebra $\mc{U}(\mc{P}^{\omega}_X)$ of $\mc{P}^{\omega}_X$ by the ideal generated by $1_{\mc{P}^{\omega}_X} - 1$.  

\begin{defn}
A module for the Picard algebroid $\mc{P}$ is a quasi-coherent $\mc{O}_X$-module $\mm$ together with a map $- \cdot - : \mc{P} \o_{\C_X} \mm \rightarrow \mm$ such that $i(f) \cdot m = f m$ and $[p,q] \cdot m = p \cdot (q \cdot m) - q \cdot (p \cdot m)$ for all $p, q \in \mc{P}, m \in \mm$ and $f \in \mc{O}_X$. 
\end{defn}

There is a natural equivalence between the category of $\mc{P}^{\omega}$-modules and the category of $\dd^{\omega}$-modules.

\subsection{Functorality} We recall from section 2.2 of \cite{BBJantzen} the functionality properties of Picard algebroids and twisted differential operators. Fix a morphism $\map : Y \rightarrow X$. Let $\mc{P}_X$ be a Picard algebroid on $X$ and $\mc{P}_Y$ a Picard algebroid on $Y$. 

\begin{defn}\label{defn:fmorphism}
A $\map$-morphism $\gamma : \mc{P}_Y \rightarrow \mc{P}_X$ is an $\mc{O}_Y$-linear map $\gamma : \mc{P}_Y \rightarrow \map^* \mc{P}_X$ such that for any section $p \in \mc{P}_Y$, $\gamma(p) = \sum_i g^i \o q^i$ with $g_i \in \mc{O}_Y$ and $q_i \in \map^{-1} \mc{P}_X$, we have 
$$
\gamma([p_1,p_2]) = \sum_{i,j} g_1^i g_2^j \o [q_1^i,q_2^j] + \sum_{j} \sigma(p_1)(g_2^j) \o q_2^j - \sum_i \sigma(p_2) (g_1^i) \o q_1^i,
$$
and $\sigma(n)(f^* g) = \sum_i g^i \map^*(\sigma(q^i)(g))$ for all $g \in \map^{-1} \mc{O}_X$.
\end{defn}

The first fundamental theorem on differential forms, \cite[Theorem 25.1]{MatCom}, implies that there is a morphism of sheaves $\map^{-1} \Omega_X^1 \rightarrow \Omega_Y^1$. This extends to a morphism of complexes $\map^{-1} \Omega_X^{\idot} \rightarrow \Omega_Y^{\idot}$ and $\map^{-1} \Omega_X^{1,2} \rightarrow \Omega^{1,2}_Y$. By functorality of hypercohomology, we get a map $\map^* : \HH^2(X,\Omega_X^{1,2}) \rightarrow \HH^2(Y,\Omega^{1,2}_Y)$. For $\omega \in \HH^2(X,\Omega_X^{1,2})$, let $\mc{P}_X^{\omega}$ be the corresponding Picard algebroid and $\mc{P}_Y$ the fiber product $\map^* \mc{P}_X^{\omega} \times_{\map^* \Theta_X} \Theta_Y$, where $\map^* \mc{P}_X^{\omega} \rightarrow \map^* \Theta_X$ is the anchor map and $\Theta_Y \rightarrow \map^* \Theta_X$ is $d\map$. 

\begin{lem}\label{lem:fmorphism}
The sheaf $\mc{P}_Y$ is a Picard algebroid, $\psi : \mc{P}_Y \rightarrow \map^* \mc{P}_X$ is a $\map$-morphism and we have an isomorphism of Picard algebroids $\mc{P}_Y \simeq \mc{P}_Y^{\map^* \omega}$. 
\end{lem}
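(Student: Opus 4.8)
The plan is to equip the fiber product $\mc{P}_Y := \map^* \mc{P}_X^{\omega} \times_{\map^* \Theta_X} \Theta_Y$ with a Picard-algebroid structure whose anchor is the second projection $\mc{P}_Y \to \Theta_Y$ and whose central unit section is $1_{\mc{P}_Y} := (1 \o 1_{\mc{P}_X^\omega}, 0)$, to observe that the first projection $\psi : \mc{P}_Y \to \map^* \mc{P}_X^{\omega}$ is then a $\map$-morphism in the sense of Definition \ref{defn:fmorphism}, and finally to identify the class of $\mc{P}_Y$ in $\HH^2(Y, \Omega_Y^{1,2})$ with $\map^*\omega$. For the underlying $\mc{O}_Y$-module structure, note that since $\Theta_X$ is locally free the defining sequence $0 \to \mc{O}_X \to \mc{P}_X^\omega \to \Theta_X \to 0$ is locally split, so pulling back keeps it exact, and forming the fiber product of a short exact sequence (with surjective right-hand map) against $d\map : \Theta_Y \to \map^* \Theta_X$ produces $0 \to \mc{O}_Y \to \mc{P}_Y \to \Theta_Y \to 0$, the exact sequence required of a Picard algebroid, with $\mc{O}_Y$ embedded as the kernel of the anchor via $1_{\mc{P}_Y}$.

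The heart of the argument is the bracket on $\mc{P}_Y$. For local sections $p_a = (\alpha_a, \nu_a)$, $a = 1,2$, with $\alpha_a \in \map^* \mc{P}_X^\omega$, $\nu_a \in \Theta_Y$ and $\sigma(\alpha_a) = d\map(\nu_a)$, choose local expressions $\alpha_a = \sum_j g_a^j \o q_a^j$ with $g_a^j \in \mc{O}_Y$ and $q_a^j \in \map^{-1}\mc{P}_X^\omega$, and set
$$
[p_1, p_2] := \Bigl( \sum_{j,k} g_1^j g_2^k \o [q_1^j, q_2^k] + \sum_k \nu_1(g_2^k) \o q_2^k - \sum_j \nu_2(g_1^j) \o q_1^j, \ [\nu_1, \nu_2] \Bigr).
$$
This is exactly the expression dictated by Definition \ref{defn:fmorphism}, so once it is well defined $\psi$ is automatically a $\map$-morphism. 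The verifications required are: (i) the first coordinate is independent of the expression $\alpha_a = \sum_j g_a^j \o q_a^j$ — a local computation using the Leibniz rule in $\mc{P}_X^\omega$ together with the compatibility relation $\sum_j g_a^j \o \sigma(q_a^j) = \sigma(\alpha_a) = d\map(\nu_a)$ built into the fiber product; (ii) the pair lies in $\mc{P}_Y$, i.e. $\sigma$ of its first coordinate is $d\map([\nu_1,\nu_2])$, which follows from (i) and from $d\map$ being a $\map$-morphism $\Theta_Y \to \Theta_X$; (iii) antisymmetry, the Leibniz identity $[p_1, f p_2] = f[p_1,p_2] + \nu_1(f) p_2$, the Jacobi identity, and centrality of $1_{\mc{P}_Y}$, each deduced from the corresponding property of $\mc{P}_X^\omega$ and $\Theta_Y$ and from $\nu(1) = 0$, $\sigma(1_{\mc{P}_X^\omega}) = 0$. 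Since the formula is canonical, the local brackets glue to a global one, so $\mc{P}_Y$ is a Picard algebroid and $\psi$ is a $\map$-morphism.

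To identify $\mc{P}_Y$ with $\mc{P}_Y^{\map^*\omega}$, by the classification of Picard algebroids it is enough to show the class of $\mc{P}_Y$ in $\HH^2(Y, \Omega_Y^{1,2})$ is $\map^*\omega$. On an open $U \subset X$ trivializing $\mc{P}_X^\omega$ one has $\mc{P}_X^\omega|_U \cong \mc{O}_U \oplus \Theta_U$ with bracket twisted by a closed form $\eta_U \in \Omega^2(U)$; the $\eta_U$ together with the comparison $1$-forms on overlaps form a cocycle representing $\omega$. Pulling back, $\mc{P}_Y|_{\map^{-1}(U)} \cong (\mc{O} \oplus \map^*\Theta_U) \times_{\map^*\Theta_U} \Theta \cong \mc{O} \oplus \Theta$, and under this identification the bracket above is the one twisted by $\map^*\eta_U$; the corresponding cocycle is the pullback of the one for $\omega$, hence represents $\map^*\omega$. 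Therefore $\mc{P}_Y \simeq \mc{P}_Y^{\map^*\omega}$.

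I expect the main obstacle to be point (i) — well-definedness of the bracket — since this is the only step that uses the fiber-product compatibility $\sigma(\alpha_a) = d\map(\nu_a)$ in an essential way, and because the three sums are only $\C_Y$-linear one has to track the derivative terms carefully; points (ii), (iii) and the gluing are then routine.
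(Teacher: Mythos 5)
Your proposal is correct and follows exactly the standard construction that the paper itself does not spell out (Lemma \ref{lem:fmorphism} is stated without proof, being recalled from \cite{BBJantzen}, \S 2.2): the bracket on the fiber product forced by the $\map$-morphism formula, well-definedness via the compatibility $\sigma(\alpha)=d\map(\nu)$ built into the fiber product, and identification of the class $\map^*\omega$ by local splittings and the induced \v{C}ech hypercocycle for $\Omega^{1,2}$. You correctly single out well-definedness of the bracket as the only point where the fiber-product condition is used in an essential way; the remaining verifications (Jacobi, centrality of $1_{\mc{P}_Y}$, gluing, and the cocycle comparison) are routine as you indicate.
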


Thus, by definition, the diagram 
$$
\xymatrix{ 
0 \ar[r] & \mc{O}_Y \ar[r] \ar[d] & \mc{P}_Y^{\map^* \omega} \ar[d]_{\psi} \ar[r]^{\sigma_Y} & \Theta_Y  \ar[d] \ar[r] & 0 \\
 & \mc{O}_Y = \map^* \mc{O}_X \ar[r] & \map^* \mc{P}_X^{\omega}  \ar[r]^{\sigma_X}  & \map^* \Theta_X &  
}
$$
commutes. The projection $\mc{P}_Y^{\map^* \omega} \rightarrow \map^* \mc{P}_X^{\omega}$ extends to a morphism $\dd_Y^{\map^* \omega} \rightarrow \map^* \dd_X^{\omega}$, making $\map^* \dd_X^{\omega}$ a left $\dd_Y^{\map^* \omega}$-module. Let $\ms{M}$ be a left $\dd_X^{\omega}$-module. Since $\map^* \ms{M} = \map^* \dd_X^{\omega} \o_{\map^{-1} \dd_X^{\omega}} \map^{-1} \ms{M}$, we have 

\begin{prop}
For any $\ms{M} \in \LMod{\dd_X^{\omega}}$, the sheaf $\map^* \ms{M}$ is a $\dd_Y^{\map^* \omega}$-module. 
\end{prop}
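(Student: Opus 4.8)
The plan is to realize $\map^*\ms{M}$ as a tensor product over $\map^{-1}\dd_X^\omega$ against a suitable bimodule, and to read off the left $\dd_Y^{\map^*\omega}$-action from the left factor. Concretely: recall $\map^*\dd_X^\omega=\mc{O}_Y\o_{\map^{-1}\mc{O}_X}\map^{-1}\dd_X^\omega$. This sheaf carries an evident right $\map^{-1}\dd_X^\omega$-module structure, by right multiplication on the second factor, which visibly commutes with left multiplication by $\mc{O}_Y$. It also carries the left $\dd_Y^{\map^*\omega}$-module structure produced just above the statement from the $\map$-morphism $\psi:\mc{P}_Y^{\map^*\omega}\to\map^*\mc{P}_X^\omega$: the subsheaf $\mc{O}_Y\subset\dd_Y^{\map^*\omega}$ acts by left multiplication on the first factor, while a local section $p\in\mc{P}_Y^{\map^*\omega}$ with $\psi(p)=\sum_i g^i\o q^i$ acts by $p\cdot(g\o P)=\sigma_Y(p)(g)\o P+g\sum_i g^i\o(q^i P)$, the $q^i$ being read in $\dd_X^\omega$.

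First I would record that these two actions commute, so that $\map^*\dd_X^\omega$ is a $(\dd_Y^{\map^*\omega},\map^{-1}\dd_X^\omega)$-bimodule. For the $\mc{O}_Y$-part this is clear; for $p\in\mc{P}_Y^{\map^*\omega}$ and $Q\in\map^{-1}\dd_X^\omega$ it is immediate from the displayed formula, since $p\cdot\big((g\o P)Q\big)=p\cdot(g\o PQ)=\sigma_Y(p)(g)\o PQ+g\sum_i g^i\o(q^i PQ)=\big(p\cdot(g\o P)\big)Q$. Granting this, for any $\ms{M}\in\LMod{\dd_X^\omega}$ the sheaf $\map^{-1}\ms{M}$ is a left $\map^{-1}\dd_X^\omega$-module, so $\map^*\ms{M}=\map^*\dd_X^\omega\o_{\map^{-1}\dd_X^\omega}\map^{-1}\ms{M}$ inherits a left $\dd_Y^{\map^*\omega}$-module structure extending the $\mc{O}_Y$-action; and since pull-back of quasi-coherent sheaves is quasi-coherent, $\map^*\ms{M}$ is quasi-coherent over $\mc{O}_Y$, so it lies in $\LMod{\dd_Y^{\map^*\omega}}$.

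The only point that is not purely formal is that the prescription for the $p$-action above is a bona fide action of $\dd_Y^{\map^*\omega}$, i.e. that it is independent of the chosen presentation of $g\o P$ over $\map^{-1}\mc{O}_X$, respects the relation $1_{\mc{P}}=1$, and is compatible with brackets. This is exactly what the identities defining a $\map$-morphism in Definition \ref{defn:fmorphism} deliver — the bracket identity for compatibility with brackets, and the identity $\sigma_Y(n)(\map^* g)=\sum_i g^i\map^*(\sigma(q^i)(g))$ for well-definedness — and in the untwisted case the argument is that of \cite[\S1.3]{HTT}, the twist being absorbed uniformly into the Picard-algebroid formalism of \cite{BBJantzen}. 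As this verification was already carried out in the sentence preceding the statement, the remaining work is only the bookkeeping indicated above, and there is no serious obstacle.
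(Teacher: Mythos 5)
Your argument is correct and is essentially the paper's own: the left $\dd_Y^{\map^*\omega}$-module structure on $\map^*\dd_X^\omega$ coming from the $\map$-morphism $\psi:\mc{P}_Y^{\map^*\omega}\to\map^*\mc{P}_X^\omega$, combined with the identification $\map^*\ms{M}=\map^*\dd_X^\omega\o_{\map^{-1}\dd_X^\omega}\map^{-1}\ms{M}$, is exactly how the paper deduces the proposition. You simply spell out the bimodule bookkeeping and the well-definedness check (via the anchor identity in Definition \ref{defn:fmorphism}) that the paper leaves implicit, as in \cite[\S 1.3]{HTT}.
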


\begin{rem}\label{rem:etalepullback}
If $\map$ is etal\'e, then $d \map : \Theta_Y \rightarrow \map^* \Theta_X$ is an isomorphism. Therefore, the projection $\mc{P}_Y^{\map^* \omega} \rightarrow \map^* \mc{P}_X^{\omega}$ is also an isomorphism and, in this case, the isomorphism $\gamma : \dd_Y^{\map^* \omega} \rightarrow \map^* \dd_X^{\omega}$ of left $\dd_Y^{\map^* \omega}$-modules is actually an algebra isomorphism (in particular, $\map^* \dd_X^{\omega}$ is a sheaf of algebras).
\end{rem}

\subsection{Monodromic $\dd$-modules}

Let $T$ be a torus i.e. a product of copies the multiplicative group $\Cs$. The Lie algebra of $T$ is denoted $\mf{t}$. Let $\pi : Y \rightarrow X$ be a principal $T$-bundle, with $X$ smooth. A common way of constructing sheaves of twisted differential operators on $X$ is by quantum Hamiltonian reduction. Let $\mu : \mf{t} \rightarrow \dd_Y$ be the differential of the action of $T$ on $Y$. Since $\dd_Y$ is a $T$-equivariant sheaf, there is a stalk-wise action of $T$ on $\pi_{\idot} \dd_Y$. The map $\mu$ is $T$-equivariant and, since $T$ acts trivially on $\mf{t}$, $\mu$ descends to a map $\mf{t} \rightarrow (\pi_{\idot} \dd_Y)^T$. The image of $\mu$ is central. Given a character $\chi : \mf{t} \rightarrow \C$, let 
$$
\dd_{X,\chi} := (\pi_{\idot} \dd_Y)^T / \langle \{\mu(t) - \chi(t) \ | \ t \in \mf{t} \} \rangle.
$$
Let $\mathbb{X}(T)$ be the lattice of characters of $T$. By differentiation, we may identify $\mathbb{X}(T)$ with a lattice in $\mf{t}^*$ such that $\mathbb{X}(T) \o_{\Z} \C = \mf{t}^*$. Given $\lambda \in \mathbb{X}(T)$, the sheaf of $\lambda$-semi-invariant sections $(\pi_{\idot} \mc{O}_Y)^{\lambda}$ is a line bundle on $X$. Thus, we have a map $\mathbb{X}(T) \rightarrow H^1(X,\mc{O}_X^{\times})$. Composing this with the map $\mc{O}_X^{\times} \stackrel{d \log}{\longrightarrow} \Ker (d : \Omega_X^1 \rightarrow \Omega^2_X) \subset \Omega_X^{1,2}$ gives a map 
$$
\beta_{\Z} :  \mathbb{X}(T) \rightarrow H^1(X,\mc{O}_X^{\times}) \stackrel{d \log}{\longrightarrow}  \HH^2(X,\Omega_{X}^{1,2})
$$
of $\Z$-modules. Extending scalars, we get a map 
\beq{eq:betadef}
\beta : \mf{t}^* \rightarrow \HH^2(X,\Omega_X^{1,2}).
\eeq 

\begin{prop}\label{prop:principaltwist}
The sheaf of algebras $\dd_{X,\chi}$ is a sheaf of twisted differential operators, isomorphic to $\dd_X^{\beta(\chi)}$. 
\end{prop}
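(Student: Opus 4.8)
The plan is to establish both assertions — that $\dd_{X,\chi}$ is a sheaf of twisted differential operators, and that its isomorphism class is $\beta(\chi)$ — by passing to a Zariski-local trivialization of $\pi$, which is available since every principal bundle under a torus is Zariski-locally trivial. Fix an affine open cover $\{U_i\}$ of $X$ with $T$-equivariant isomorphisms $Y|_{U_i}\cong U_i\times T$, and let $g_{ij}\colon U_{ij}\to T$ be the transition functions. Over $U_i$ one has $\dd_Y|_{U_i}=\dd_{U_i}\boxtimes\dd_T$, and, since $T$ acts only on the $T$-factor, $(\pi_{\idot}\dd_Y)^T|_{U_i}=\dd_{U_i}\otimes_{\C}(\dd_T)^T$; for a rank-$r$ torus $(\dd_T)^T$ is the polynomial algebra $\C[\theta_1,\dots,\theta_r]$ on the Euler vector fields, and $\mu(t)=\sum_k t_k\theta_k$ for $t=\sum_k t_ke_k$, so $(\dd_T)^T/\langle\,\mu(t)-\chi(t)\mid t\in\mf{t}\,\rangle\cong\C$. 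Hence the quotient map induces an isomorphism $\Psi_i\colon\dd_{X,\chi}|_{U_i}\iso\dd_{U_i}$ of filtered sheaves of $\mc{O}_{U_i}$-algebras, for the filtration inherited from the order filtration on $\dd_Y$. A sheaf of filtered $\mc{O}_X$-algebras that is Zariski-locally isomorphic to $\dd_X$ is a TDO (its degree-one part is then a Picard algebroid whose enveloping-algebra quotient recovers the sheaf), so this already gives the first assertion.

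For the class, I would read it off the gluing data. Over $U_{ij}$ the automorphism $\Psi_j\circ\Psi_i^{-1}$ of $\dd_{U_{ij}}$ is the one induced by the change of trivialization $(x,s)\mapsto(x,g_{ij}(x)\cdot s)$: transporting a vector field $\partial_{x_l}$ of $U_i\times T$ into the $U_j\times T$-chart produces a correction term built from the derivatives of $g_{ij}$ paired with the Euler fields $\theta_k$, and after imposing $\theta_k=\chi_k$ this correction becomes multiplication by the closed one-form $\chi(d\log g_{ij})$, where $d\log g_{ij}=g_{ij}^{-1}dg_{ij}\in\Omega^1_{U_{ij}}\otimes\mf{t}$ and we apply $\chi\in\mf{t}^*$. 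Equivalently, $\Psi_j\circ\Psi_i^{-1}$ is conjugation by the multivalued symbol $g_{ij}^{\chi}$, that is, the automorphism of $\dd_{U_{ij}}$ attached to $\chi(d\log g_{ij})$ under the standard identification of the group of filtered $\mc{O}$-algebra automorphisms of $\dd_{U_{ij}}$ fixing $\mc{O}_{U_{ij}}$ with $\Gamma(U_{ij},(\Omega^1_X)^{\mathrm{cl}})$. Consequently the class of $\dd_{X,\chi}$ in $\HH^2(X,\Omega^{1,2}_X)$ is the image of the {\v C}ech $1$-cocycle $\{\chi(d\log g_{ij})\}$ under $H^1(X,(\Omega^1_X)^{\mathrm{cl}})\to\HH^2(X,\Omega^{1,2}_X)$.

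Finally I would match this with $\beta(\chi)$. For $\lambda\in\mathbb{X}(T)$, the line bundle $(\pi_{\idot}\mc{O}_Y)^{\lambda}$ has transition functions $\lambda\circ g_{ij}\colon U_{ij}\to\C^\times$ in these trivializations, so $\beta_{\Z}(\lambda)=d\log[(\pi_{\idot}\mc{O}_Y)^{\lambda}]$ is represented by $\{d\log(\lambda(g_{ij}))\}=\{\lambda(d\log g_{ij})\}$; since $\lambda\mapsto\lambda(d\log g_{ij})$ extends $\C$-linearly to $\chi\mapsto\chi(d\log g_{ij})$ and passage to a cohomology class is linear, $\beta(\chi)=[\{\chi(d\log g_{ij})\}]=[\dd_{X,\chi}]$, which is the claim. (For integral $\chi=\lambda$ one may bypass the cocycle computation altogether: $(\pi_{\idot}\mc{O}_Y)^{\lambda}$ is a $\dd_{X,\lambda}$-module on which each $\mu(t)-\lambda(t)$ acts by zero, hence there is an algebra map from $\dd_{X,\lambda}$ to the TDO acting on the line bundle $(\pi_{\idot}\mc{O}_Y)^{\lambda}$, of class $d\log[(\pi_{\idot}\mc{O}_Y)^{\lambda}]=\beta_{\Z}(\lambda)$, and this map is an isomorphism by the local computation above.) The main obstacle is the middle step: computing $\Psi_j\circ\Psi_i^{-1}$ after the Hamiltonian reduction and verifying that it has logarithmic derivative $\chi(d\log g_{ij})$, while keeping consistent the conventions for the direction of the $T$-action, the sign of $\mu$, and the normalization of $\beta_{\Z}$ in terms of semi-invariant sections; the remaining ingredients — Zariski-local triviality of torus bundles, the structure of $(\dd_T)^T$, and the map from {\v C}ech cocycles of closed one-forms to $\HH^2(X,\Omega^{1,2}_X)$ — are standard.
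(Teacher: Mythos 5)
The paper offers no proof of this proposition: it is stated in the appendix as a known fact, quoted from Be\u{\i}linson--Bernstein \cite{BBJantzen} (and Kashiwara), so there is no internal argument to compare against. Your proposal is correct and is essentially the standard proof from that source: Zariski-local triviality of torus bundles (tori are special groups) reduces $(\pi_{\idot}\dd_Y)^T$ locally to $\dd_{U_i}\otimes_{\C}\C[\theta_1,\dots,\theta_r]$, the reduction at $\chi$ gives local isomorphisms with $\dd_{U_i}$, hence a TDO, and the gluing automorphisms are the closed one-forms $\chi(d\log g_{ij})$, whose \v{C}ech class matches the paper's definition of $\beta(\chi)$ via $d\log$ of the transition cocycles of $(\pi_{\idot}\mc{O}_Y)^{\lambda}$, extended $\C$-linearly from $\mathbb{X}(T)$ to $\mf{t}^*$. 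The only point left implicit is the one you yourself flag: fixing the sign conventions (direction of the $T$-action, sign of $\mu$, semi-invariance convention for $(\pi_{\idot}\mc{O}_Y)^{\lambda}$) consistently so that one lands on $\beta(\chi)$ rather than $-\beta(\chi)$; this is routine bookkeeping and does not affect the validity of the argument.
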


\small{

\bibliographystyle{plain}

\def\cprime{$'$} \def\cprime{$'$} \def\cprime{$'$} \def\cprime{$'$}
  \def\cprime{$'$} \def\cprime{$'$} \def\cprime{$'$} \def\cprime{$'$}
  \def\cprime{$'$} \def\cprime{$'$} \def\cprime{$'$} \def\cprime{$'$}

}

\end{document}